\theoremstyle{plain}
\newtheorem{theorem}{Theorem}[section]
\newtheorem{proposition}[theorem]{Proposition}
\newtheorem{lemma}[theorem]{Lemma}
\theoremstyle{definition}
\theoremstyle{remark}
\renewcommand{\thefootnote}{\arabic{footnote}}
\newcommand{\C}[1]{\ensuremath{{\mathcal C}^{#1}}} 
\def\R{\mathbb R}
\def\C{\mathbb C}
\def\al{\alpha}
\def\om{\omega}
\def\Om{\Omega}
\def\be{\beta}
\def\ga{\gamma}
\def\de{\delta}
\def\De{\Delta} 
\def\lam{\lambda}
\def\vphi{\varphi}
\def\ep{\epsilon}
\def\na{\nabla}
\def\pa{\partial}
\def\lt{\left}
\def\rt{\right}
\def\o{\overline}
\numberwithin{equation}{section}
\title{Extremal functions for the Moser--Trudinger inequality of Adimurthi--Druet type in $W^{1,N}(\R^N)$}
\author{Van Hoang Nguyen\footnote{
Institut de Math\'ematiques de Toulouse, Universit\'e Paul Sabatier, 118 Route de Narbonne, 31062 Toulouse c\'edex 09, France.}
}
\begin{document}
\maketitle


\renewcommand{\thefootnote}{}

\footnote{Email: \href{mailto: Van Hoang Nguyen <van-hoang.nguyen@math.univ-toulouse.fr>}{van-hoang.nguyen@math.univ-toulouse.fr}}

\footnote{2010 \emph{Mathematics Subject Classification\text}: 46E35, 26D10.}

\footnote{\emph{Key words and phrases\text}: Moser--Trudinger inequality, blow-up analysis, elliptic estimate, extremal function.}

\renewcommand{\thefootnote}{\arabic{footnote}}
\setcounter{footnote}{0}

\begin{abstract}
We study the existence and nonexistence of maximizers for variational problem concerning to the Moser--Trudinger inequality of Adimurthi--Druet type in $W^{1,N}(\mathbb R^N)$
\[
MT(N,\beta, \alpha) =\sup_{u\in W^{1,N}(\mathbb R^N), \|\nabla u\|_N^N + \|u\|_N^N\leq 1} \int_{\mathbb R^N} \Phi_N(\beta(1+\alpha \|u\|_N^N)^{\frac1{N-1}} |u|^{\frac N{N-1}}) dx,
\]
where $\Phi_N(t) =e^{t} -\sum_{k=0}^{N-2} \frac{t^k}{k!}$, $0\leq \alpha < 1$ both in the subcritical case $\beta < \beta_N$ and critical case $\beta =\beta_N$ with $\beta_N = N \omega_{N-1}^{\frac1{N-1}}$ and $\omega_{N-1}$ denotes the surface area of the unit sphere in $\mathbb R^N$. We will show that $MT(N,\beta,\alpha)$ is attained in the subcritical case if $N\geq 3$ or $N=2$ and $\beta \in (\frac{2(1+2\alpha)}{(1+\alpha)^2 B_2},\beta_2)$ with $B_2$ is the best constant in a Gagliardo--Nirenberg inequality in $W^{1,2}(\mathbb R^2)$. We also show that $MT(2,\beta,\alpha)$ is not attained for $\beta$ small which is different from the context of bounded domains. In the critical case, we prove that $MT(N,\beta_N,\alpha)$ is attained for $\alpha\geq 0$ small enough. To prove our results, we first establish a lower bound for $MT(N,\beta,\alpha)$ which excludes the concentrating or vanishing behaviors of their maximizer sequences. This implies the attainability of $MT(N,\beta,\alpha)$ in the subcritical case. The proof in the critical case is based on the blow-up analysis method. Finally, by using the Moser sequence together the scaling argument, we show that $MT(N,\beta_N,1) =\infty$. Our results settle the questions left open in \cite{doO2015,doO2016}.
\end{abstract}

\section{Introduction}
It is Let $\Om$ be a bounded smooth domain in $\R^N$, $N\geq 2$. The Sobolev inequality asserts that the embedding $W^{1,p}_0(\Om)\hookrightarrow L^{\frac{Np}{N-p}}(\Om)$ holds if $1 \leq p < N$ where $W^{1,p}_0(\Om)$ and $L^p(\Om)$ denote the usual Sobolev space and $L^p$ space on $\Om$ respectively. Such inequality plays an important role in many branches of mathematics such as analysis, geometric, partial differential equations, calculus of variations, etc. In the critical case $p =N$, it is well known that the embedding $W^{1,N}_0(\Om) \hookrightarrow L^\infty(\Om)$ does not hold. In this case, the Moser--Trudinger inequality is a perfect replacement. The Moser--Trudinger inequality was proved independently by Yudovi$\check{\text{\rm c}}$ \cite{Y1961}, Poho$\check{\text{\rm z}}$aev \cite{P1965} and Trudinger \cite{T1967} which asserts the existence of a number $\beta_0$ such that 
\begin{equation}\label{eq:usualMT}
\sup_{u\in W^{1,N}_0(\Om), \|\na u\|_N \leq 1} \int_{\Om} e^{\beta |u|^{\frac{N}{N-1}}} dx < \infty,
\end{equation}
for any $\beta \leq \beta_0$, here $\|\na u\|_N = \lt(\int_\Om |\na u|^N dx\rt)^{1/N}$ denote the usual $L^p$ norm of distributional gradient of $u$. Latter, Moser \cite{M1970} sharpend this inequality by finding the best constant $\beta_0$. More precisely, he proved that the supremum in \eqref{eq:usualMT} is finite for any $\beta\leq \beta_N := N \om_{N-1}^{1/({N-1})}$ with $\om_{N-1}$ denotes the surface are of the unit sphere in $\R^N$, and is infinite if $\beta > \beta_N$. The sharp Moser--Trudinger inequality is a crucial tool in studying the partial differential equation with exponential nonlinearity. Because of its importance, the sharp Moser--Trudinger inequality was generalized to the Heisenberg groups, complex sphere, Riemannian compact manifolds, and hyperbolic space \cite{CLhei,CLcomplex,Li2001,WY2012}.

Suggested by the concentration--compactness principle due to Lions \cite{Lions1985}, the following improvement of the sharp Moser--Trudinger inequality was proposed by Adimurthi and Druet \cite{AD2004} for $N=2$, and by Yang \cite{Yang06*,Yang07corrige} for $N\geq 3$,
\begin{equation}\label{eq:ADY}
\sup_{u\in W^{1,N}_0(\Om), \|\na u\|_N \leq 1} \int_\Om e^{\beta_N (1+ \al \|u\|_N^N)^{\frac1{N-1}} |u|^{\frac N{N-1}}} dx < \infty
\end{equation} 
for any $\al < \lam_1(\Om) =: \inf\{\|\na u\|_N^N: u\in W^{1,N}_0(\Om), \|u\|_N\leq 1\}$. Moreover, the supremum in \eqref{eq:ADY} will be infinite if $\al \geq \lam_1(\Om)$. We refer the reader to the paper \cite{Yang09} for a generalization of this result to compact Riemannian manifold, and to \cite{Tintarev} for an improvement of \eqref{eq:ADY} in dimension two.

The Moser--Trudinger inequality was extended to unbounded domains by Adachi and Tanaka \cite{Adachi}, Cao \cite{Cao}, do \'O \cite{doO97}, Ruf \cite{Ruf2005} and Li and Ruf \cite{LiRuf2008}, namely
\begin{equation}\label{eq:MTfull}
\sup_{u\in W^{1,N}(\R^N), \|u\|_{W^{1,N}(\R^N)} \leq 1} \int_{\R^N} \Phi_N(\be |u|^{\frac N{N-1}}) dx < \infty,
\end{equation}
for any $\beta \leq \beta_N$ where $\|u\|_{W^{1,N}(\R^N)} = \lt(\|\na u\|_N^N + \|u\|_N^N\rt)^{1/N}$ denotes the full Sobolev norm on $W^{1,N}(\R^N)$ and 
\[
\Phi_N(t) =e^t - \sum_{k=0}^{N-2} \frac{t^k}{k!}.
\]
The supremum in \eqref{eq:MTfull} will be infinite if $\beta > \beta_N$. The sharp Moser--Trudinger inequality on entire space was then extended to a singular version general cases by Adimurthi and Yang \cite{AY10}, and to entire Heisenberg group by Lam and Lu \cite{LamLuhei}.

In recent paper \cite{doO2014}, do \'O, de Souza, Medeiros, and Severo proved a result on the weak compactness of the Moser--Trudinger functional defined in $W^{1,N}(\R^N)$, which is a version of the concentration--compactness principle due to Lions for unbounded domains (see also \cite{Nguyen2016} for the analogue result for Adams inequality \cite{Adams1988} which is the version of higher order of derivative of Moser--Trudinger inequality). More precisely, if $u_n$ is a sequence in $W^{1,N}(\R^N)$ such that $\|u_n\|_{W^{1,N}(\R^N)} =1$, $u_n \rightharpoonup u_0$ weakly in $W^{1,N}(\R^N)$, and $u_0\not\equiv 0$, then the inequality \eqref{eq:MTfull} can be improved along the sequence $u_n$ by a constant larger than $\beta_N$, i.e., for any $1< p < (1-\|u_0\|_{W^{1,N}(\R^N)}^N)^{-1/(N-1)}$, we have
\begin{equation}\label{eq:doOcc}
\sup_{n\geq 1} \int_{\R^N} \Phi_N(\beta_N p |u_n|^{\frac N{N-1}}) dx < \infty.
\end{equation}
Suggested by the concentration--compactness type principle \eqref{eq:doOcc}, do \'O and de Souza proved in \cite{doO2014*} for $N=2$ and in \cite{doO2015} for $N\geq 3$ the following Moser--Trudinger type inequality
\begin{equation}\label{eq:doOdeSouza}
\sup_{u\in W^{1,N}(\R^N), \|u\|_{W^{1,N}(\R^N)} \leq 1} \int_{\R^N}\Psi_N(\beta_N (1+\al \|u\|_N^N)^{\frac1{N-1}} |u|^{\frac N{N-1}}) dx < \infty.
\end{equation}
for any $0\leq \al < 1$, where 
\[
\Psi_N = e^t -\sum_{k=0}^{N-1} \frac{t^k}{k!} = \Phi_N(t) - \frac{t^{N-1}}{(N-1)!}.
\]
As a corollary, do \'O and de Souza obtained the following improved version of \eqref{eq:MTfull} in the spirit of Adimurthi, Druet and Yang in \cite{doO2015,doO2016}
\begin{equation}\label{eq:doOdeSouza*}
MT(N,\beta,\alpha):=\sup_{u\in W^{1,N}(\R^N), \|u\|_{W^{1,N}(\R^N)} \leq 1} \int_{\R^N}\Phi_N(\beta (1+\al \|u\|_N^N)^{\frac1{N-1}} |u|^{\frac N{N-1}}) dx < \infty.
\end{equation}
for any $\beta \leq \beta_N$ and $0\leq \al < 1$. When $\alpha =0$, \eqref{eq:doOdeSouza*} reduces to \eqref{eq:MTfull}, and for $\al >0$, \eqref{eq:doOdeSouza*} is stronger than \eqref{eq:MTfull}. We mention here that a similar result to \eqref{eq:doOdeSouza*} was proved by do \'O and de Souza in \cite{doO2014*} on the whole plane for the subspace of $W^{1,2}(\R^2)$
\[
E_V =\lt\{u\in W^{1,2}(\R^2): \int_{\R^2} V(x) u^2(x) dx < \infty\rt\},
\]
where $V$ is nonnegative, bounded away from zero, radially increasing and coercive, i.e., $V(x) \to \infty$ as $|x|\to \infty$. These assumptions on $V$ ensure the compactness of the embedding $E_V \hookrightarrow L^s(\R^2)$ for any $2 \leq s < \infty$ which plays an important role in the proof in \cite{doO2014*}. 

As usual, the proof of \eqref{eq:doOdeSouza} given in \cite{doO2015,doO2016} is based on the blow-up analysis method. However, it is easy to see that it can be deduced from \eqref{eq:MTfull}. Indeed, for any $\tau >0$, by scaling argument $u_\tau(x) = u(\tau^{1/N} x)$, we get that
\begin{align*}
C_\tau =\sup_{u\in W^{1,N}(\R^N), \|\na u\|_N^N +\tau \|u\|_N^N \leq 1} &\int_{\R^N}\Phi_N(\beta_N  |u|^{\frac N{N-1}}) dx\\
&=\frac1{\tau} \sup_{u\in W^{1,N}(\R^N), \|u\|_{W^{1,N}(\R^N)} \leq 1} \int_{\R^N}\Phi_N(\beta_N  |u|^{\frac N{N-1}}) dx = \frac{C_1}{\tau},
\end{align*}
thus is finite by \eqref{eq:MTfull}. Choosing $\tau = 1 -\alpha$, and for $u\in W^{1,N}(\R^N)$ with $\|u\|_{W^{1,N}(\R^N)} \leq 1$, define
\[
w =\frac{u}{(\|\na u\|_N^N + \tau \|u\|_N^N)^{\frac1N}}.
\]
We then have $\|\na w\|_N^N + \tau \|w\|_N^N =1$. From the observation above, we have  
\begin{equation}\label{eq:ap}
\int_{\R^N}\Phi_N(\beta_N  |w|^{\frac N{N-1}}) dx \leq C_\tau = \frac{C_1}{1-\al}.
\end{equation}
Remark that $|u|^{\frac{N}{N-1}} \leq (1 -\alpha \|u\|_N^N)^{\frac1{N-1}} |w|^{\frac N{N-1}}$ by the choice of $\tau$. Hence, we get
\[
(1+ \al \|u\|_N^N)^{\frac1{N-1}} |u|^{\frac{N}{N-1}} \leq |w|^{\frac N{N-1}},
\]
which together \eqref{eq:ap} implies \eqref{eq:doOdeSouza*}.

An interesting problem on the Moser--Trudinger inequality is whether extremal functions exist or not. Existence of extremal functions for the Moser--Trudinger inequality \eqref{eq:usualMT} was proved by Carleson and Chang \cite{CC1986} when $\Om$ is the unit ball, by Struwe \cite{Struwe} when $\Om$ is close to the ball in the sense of measure, by Flucher \cite{Flucher1992} and Lin \cite{Lin1996} when $\Om$ is a general smooth bounded domain, and by Li \cite{Li2001} for compact Riemannian surfaces. For recent developments, we refer the reader to \cite{Li2005,Yang06,Yang06*,Yang09,Yang2015,YZ}. The existence of extremal functions for the Moser--Trudinger inequality \eqref{eq:MTfull} was studied by Ruf \cite{Ruf2005} for $N=2$ and $\beta =\beta_2$, by Li and Ruf \cite{LiRuf2008} for $N\geq 3$ and $\beta =\beta_N$, and by Ishiwata \cite{Ishi} for $N=2$, $\beta \leq \beta_2$ and $N\geq 3$, $\beta < \beta_N$. The existence results in \cite{Ruf2005,LiRuf2008,Ishi} say that the extremal functions for \eqref{eq:MTfull} exist for $N\geq 3$, $\beta \leq \beta_N$ and for $N=2$, $\ep_0 \leq \beta \leq \beta_2$, for some $\ep_0 \in (0,\beta_2)$. Moreover, it was proved by Ishiwata \cite{Ishi} in dimension two that for $\beta >0$ sufficiently small, the extremal function for \eqref{eq:MTfull} do not exist. We refer reader to \cite{LY2016} for more recent result on the existence of extremal functions for the singular Moser--Trudinger inequality in whole space $\R^N$, $N\geq 2$.

Concerning to the extremal functions of \eqref{eq:doOdeSouza} and \eqref{eq:doOdeSouza*}, it was proved in \cite{doO2015,doO2016} that extremal functions for \eqref{eq:doOdeSouza} exist for any $0\leq \alpha < 1$. Note that $\Psi_N$ is obtained from $\Phi_N$ by subtracting the term corresponding to the $L^N$ norm. This modification allows us to gain the compactness necessary of the maximizing sequence to prove the the attainability of the supremum in \eqref{eq:doOdeSouza}. The question on the extremal functions of \eqref{eq:doOdeSouza*} was left open in \cite{doO2015,doO2016}. Our main aim of this paper is to settle this question. Moreover, we study the existence of extremal functions for \eqref{eq:doOdeSouza*} both in the subcritical case $\beta < \beta_N$ and in the critical case $\beta=\beta_N$. Note that the existence of extremal functions for the subcritical case is also nontrivial since the problem suffers from the lack of compactness due to the unboundedness of domain. To state our first main result, let us denote
\begin{equation}\label{eq:B2def}
B_2 = \sup_{\phi \in W^{1,2}(\R^2), \phi\not\equiv 0} \frac{\|\phi\|_4^4}{\|\na \phi\|_2^2 \|\phi\|_2^2}
\end{equation}
the best constant in the Gagliardo--Nirenberg inequality in $W^{1,2}(\R^2)$. It is known that $B_2$ is attained and $B_2 > 1/(2\pi)$ (see, e.g., \cite{Beckner,Weinstein}). Our first main result of this paper reads as follows

\begin{theorem}\label{Maintheorem}
Let $N \geq 2$ and $0\leq \alpha < 1$. There exists $u\in W^{1,N}(\R^N)$ such that $\|u\|_{W^{1,N}(\R^N)} =1$ and 
\[
MT(N,\beta,\al) = \int_{\R^N} \Phi_N (\beta(1+\al \|u\|_N^N)^{\frac1{N-1}} |u|^{\frac N{N-1}}) dx,
\]
in the following cases:
\begin{description}
\item (i) \emph{\bf(subcritical case)\text} For any $0\leq \al < 1$ and $\beta < \beta_N$ if $N\geq 3$ and $\frac{2(1+2\al)}{(1+\al)^2 B_2} < \beta < \beta_2$ if $N =2$.
\item (ii) \emph{\bf(critical case)} For $\beta =\beta_N$ and for any $0\leq \al < \al_0\in (0,1)$ for some $\al_0 \in (0,1)$.
\end{description}
Moreover, $MT(2,\beta,\al)$ is not attained if $\beta \ll 1$ for any $0 \leq \al < 1$.
\end{theorem}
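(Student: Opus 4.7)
My plan attacks the three claims with complementary tools: a variational plus concentration--compactness argument for (i), a blow-up analysis for (ii), and a Taylor-expansion comparison for (iii). A unifying observation is that low-frequency vanishing sequences produce the critical value $\Theta:=\beta^{N-1}(1+\alpha)/(N-1)!$, which is both the obstruction to beat in (i) and the ceiling in (iii).

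For the subcritical case (i), I take a maximizing sequence $u_n$ with $\|u_n\|_{W^{1,N}(\R^N)}=1$, radial and radially decreasing after Schwarz rearrangement, and extract a weak limit $u_n\rightharpoonup u_0$. By compactness of the radial embedding $W^{1,N}_{\mathrm{rad}}\hookrightarrow L^p(\R^N)$ for $p>N$, if $u_0\equiv 0$ then $u_n\to 0$ in every such $L^p$ and in $L^\infty$, and Taylor-expanding $\Phi_N=\sum_{k\geq N-1}(\cdot)^k/k!$ gives
\begin{equation*}
\limsup_n\int_{\R^N}\Phi_N\bigl(\beta(1+\alpha\|u_n\|_N^N)^{1/(N-1)}|u_n|^{N/(N-1)}\bigr)\,dx\leq\Theta.
\end{equation*}
So it suffices to exhibit one admissible function beating $\Theta$. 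For $N\geq 3$, the next Taylor term (power $N^2/(N-1)$) combined with a rescaling of any non-trivial $v$ does this for every $\beta<\beta_N$, because $\|u\|_{N^2/(N-1)}^{N^2/(N-1)}$ scales like $(1-a)^{1/(N-1)}$ (with $a=\|u\|_N^N$) while the linear deficit $\Theta-\beta^{N-1}(1+\alpha a)a/(N-1)!$ scales like $(1-a)$, so the fractional power wins near $a=1$. For $N=2$ the same analysis gives $\|u\|_4^4$ of order $(1-a)a$, matching the deficit, and a Gagliardo--Nirenberg extremizer with $\|v\|_2^2=a$ close to $1$ produces the required gain precisely when $\beta(1+\alpha)^2 B_2/2>1+2\alpha$, i.e.\ $\beta>2(1+2\alpha)/[(1+\alpha)^2B_2]$. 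Once $u_0\not\equiv 0$ is known, the improved inequality \eqref{eq:doOcc} yields an exponent $p>1$ such that $\int\Phi_N(\beta_Np|u_n|^{N/(N-1)})\,dx$ is uniformly bounded, so the integrands are equi-integrable; Vitali's theorem then identifies $u_0$ as a maximizer (testing $u_0/\|u_0\|_{W^{1,N}}$ forces $\|u_0\|_{W^{1,N}}=1$).

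For the critical case (ii), I would apply a blow-up analysis to the subcritical maximizers $u_k$ produced by (i) along $\beta_k\uparrow\beta_N$. Each $u_k$ is radial and satisfies an $N$-Laplace Euler--Lagrange equation with two Lagrange multipliers (from the norm constraint and from the factor $(1+\alpha\|u\|_N^N)^{1/(N-1)}$). If $c_k:=\|u_k\|_\infty$ stays bounded, elliptic estimates give strong $W^{1,N}$-convergence to a maximizer at $\beta_N$. Otherwise $c_k\to\infty$, concentration occurs at the origin, and after rescaling by $r_k\to 0$ chosen so that the rescaled profile converges to the standard bubble solving $-\Delta_N U=e^U$ on $\R^N$, a Carleson--Chang--Li-type capacity argument yields a sharp upper bound for $MT(N,\beta_N,\alpha)$ in the blow-up scenario. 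A Moser-type concentrating test function augmented by an explicit low-frequency correction contributing to $\|u\|_N^N$ produces a lower bound which, thanks to the $(1+\alpha\|u\|_N^N)^{1/(N-1)}$ factor, strictly exceeds the blow-up upper bound for $\alpha<\alpha_0$ small; the resulting contradiction rules out concentration and delivers the maximizer.

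For non-attainment when $N=2$ and $\beta\ll 1$, fix $v\in W^{1,2}(\R^2)\setminus\{0\}$ and set $u_t(x):=c_tv(tx)$ with $c_t=(\|\nabla v\|_2^2+t^{-2}\|v\|_2^2)^{-1/2}$, so $\|u_t\|_{W^{1,2}}=1$, $\|u_t\|_2^2\to 1$ and $\|u_t\|_\infty\to 0$ as $t\to 0^+$; Taylor expansion gives $\int\Phi_2(\beta(1+\alpha\|u_t\|_2^2)u_t^2)\,dx\to\beta(1+\alpha)$, hence $MT(2,\beta,\alpha)\geq\beta(1+\alpha)$. For the strict upper bound, take any admissible $u\not\equiv 0$, set $a=\|u\|_2^2\in[0,1)$, and combine the elementary estimate $\Phi_2(s)-s\leq\tfrac12 s^2 e^s$ with $\|u\|_4^4\leq B_2(1-a)a$ and a standard Moser--Trudinger bound on $\int e^{\beta(1+\alpha)u^2}\,dx$ to obtain
\begin{equation*}
\int_{\R^2}\Phi_2\bigl(\beta(1+\alpha a)u^2\bigr)\,dx\leq\beta(1+\alpha a)a+C(\alpha)\beta^2(1-a)a;
\end{equation*}
since $\beta(1+\alpha)-\beta(1+\alpha a)a\geq\beta(1-a)$, the quadratic remainder is strictly dominated for $\beta<1/C(\alpha)$, yielding a value $<\beta(1+\alpha)$ and hence non-attainment. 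The hardest step will be the critical case: both the blow-up upper bound and the matching test-function lower bound must be controlled to leading order in $\alpha$, with the smallness condition $\alpha<\alpha_0$ emerging precisely from the strict inequality between these two quantities.
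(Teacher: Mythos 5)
The serious gap is in your compactness step for (i). After establishing $u_0\not\equiv 0$, you invoke \eqref{eq:doOcc} and ``Vitali's theorem'' to pass to the limit and declare $u_0$ a maximizer. On the unbounded domain $\R^N$ Vitali requires, besides equi-integrability over sets of small measure, tightness: no mass may escape to infinity. The bound \eqref{eq:doOcc} controls the large values of $u_n$ (concentration), but the dangerous part of $\Phi_N$ is the lowest term $t^{N-1}/(N-1)!$, whose integral is comparable to $\|u_n\|_N^N$, and a radially decreasing maximizing sequence can perfectly well split as $u_0$ plus a flattening tail that carries $L^N$-mass $a-\|u_0\|_N^N>0$ off to infinity (partial vanishing); nothing in your argument excludes this. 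In that scenario the functional converges to $\int_{\R^N}\Phi_N(\beta(1+\al a)^{\frac1{N-1}}u_0^{\frac N{N-1}})\,dx+\frac{\beta^{N-1}}{(N-1)!}(1+\al a)(a-\|u_0\|_N^N)$ — note also that the constraint factor $(1+\al\|u_n\|_N^N)$ tends to $1+\al a$ with $a>\|u_0\|_N^N$ — and this is \emph{not} the value of the functional at $u_0$. The paper closes exactly this hole by the identity \eqref{eq:dangthucsup} followed by the dilation comparison $v_0(x)=u_0(x/\tau)$, $\tau^N=a/\|u_0\|_N^N$, which shows partial vanishing would make $MT$ strictly exceed itself unless $\tau=1$; this is the very difficulty the author highlights when criticizing Lemma 4.3 of Lu--Zhu. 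The same omission affects the bounded-$c_k$ branch of your (ii): $C^1_{\rm loc}$ convergence alone does not produce a maximizer, and the paper's Lemma \ref{boundedcase} again needs Proposition \ref{lowerboundsub} plus the dilation trick. A smaller inaccuracy: when $u_0\equiv 0$, the compact radial embedding gives $u_n\to 0$ in $L^p$ for $N<p<\infty$ but certainly not in $L^\infty$ (concentration at the origin is not excluded); your conclusion $\limsup\le\Theta$ is still true for $\beta<\beta_N$, but it requires a uniform (sub)exponential bound to sum the Taylor series, or the $B_R$-splitting with Lemma \ref{radiallemma} that the paper uses, not sup-norm convergence.

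For the record, the other ingredients are sound: your construction beating $\Theta$ (rescaled test functions, Gagliardo--Nirenberg extremizer for $N=2$, giving the threshold $\beta>2(1+2\al)/[(1+\al)^2B_2]$) is essentially the paper's Proposition \ref{lowerboundsub}, and your blow-up outline for (ii) follows the paper's scheme, though the strict gain over the Carleson--Chang bound comes from the Green-function term $\|G_\al\|_N^N$ (present already at $\al=0$), with $\al<\al_0$ needed because the $\al$-corrections can be negative — not from the factor $(1+\al\|u\|_N^N)^{1/(N-1)}$ itself. Your non-attainment argument in dimension two is a genuinely different and workable route: instead of the paper's computation that $\frac{d}{dt}J[w_t]|_{t=1}<0$ for a would-be maximizer, you compare directly with the vanishing value $\beta(1+\al)$, bounding the remainder by $C(\al)\beta^2(1-a)$ via Gagliardo--Nirenberg and the subcritical Moser--Trudinger bound and using the deficit $\beta(1+\al)-\beta(1+\al a)a\ge\beta(1-a)$; once the H\"older exponents are written out this is a complete and arguably more elementary proof of that part.
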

Since $B_2 > 1/(2\pi)$ and $\beta_2 = 4\pi$ then our assumption of Theorem \ref{Maintheorem} makes sense in the dimension two. 

It was shown in \cite{doO2015,doO2016} that $MT(N,\beta_N,\al) =\infty$ for any $\al >1$. It was also asked in those papers that $MT(N,\beta_N,1)$ is finite or not as an open problem. In this paper, by using the Moser sequence \cite{M1970,Adachi} together the scaling argument, we will prove that $MT(N,\beta_N,1) =\infty$. Thus $1$ is the critical threshold of $\al$ for which $MT(N,\beta,\al)$ is finite.

\begin{theorem}\label{Maintheorem2}
For any $N \geq 2$, it holds $MT(N,\beta_N,1) =\infty$.
\end{theorem}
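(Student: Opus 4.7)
The plan is to exhibit an explicit sequence $v_n\in W^{1,N}(\R^N)$ with $\|v_n\|_{W^{1,N}(\R^N)}=1$ along which the Moser--Trudinger functional in \eqref{eq:doOdeSouza*} with $\al=1$ and $\be=\be_N$ blows up. I would start from the classical Moser sequence
\[
M_n(x)=\om_{N-1}^{-1/N}\begin{cases}(\log n)^{(N-1)/N}&|x|\le 1/n,\\ \log(1/|x|)/(\log n)^{1/N}&1/n\le|x|\le 1,\\0&|x|\ge 1,\end{cases}
\]
which satisfies $\|\na M_n\|_N=1$ together with the well-known asymptotic $\|M_n\|_N^N=(c_0+o(1))/\log n$ with $c_0=N!/N^{N+1}>0$. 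I would then introduce two parameters $a_n,\lam_n>0$ and set
\[
v_n(x)=a_n M_n(\lam_n x),\qquad \tau_n:=\lam_n^{-N}\|M_n\|_N^N,\qquad a_n^N=\frac{1}{1+\tau_n},
\]
so that by scaling $\|\na v_n\|_N^N=a_n^N$ and $\|v_n\|_N^N=a_n^N\tau_n$; the normalization $\|v_n\|_{W^{1,N}(\R^N)}^N=a_n^N(1+\tau_n)=1$ is automatic, and $\|v_n\|_N^N=\tau_n/(1+\tau_n)$.

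On the concentration core $B_{1/(n\lam_n)}$ the function $v_n$ is constant and equals $a_n\om_{N-1}^{-1/N}(\log n)^{(N-1)/N}$, so using $\be_N=N\om_{N-1}^{1/(N-1)}$ one computes
\[
A_n:=\be_N(1+\|v_n\|_N^N)^{\frac{1}{N-1}}|v_n|^{\frac{N}{N-1}}=N(\log n)\lt[\frac{1+2\tau_n}{(1+\tau_n)^2}\rt]^{\frac{1}{N-1}}.
\]
The Taylor expansion $(1+2\tau)/(1+\tau)^2=1-\tau^2+O(\tau^3)$ at $\tau=0$ is the engine of the whole argument: its \emph{first-order} vanishing at $0$ is exactly what distinguishes $\al=1$ from $\al<1$; indeed, for $\al<1$ one has $(1+(1+\al)\tau)/(1+\tau)^2=1+(\al-1)\tau+O(\tau^2)$, whose linear negative correction in $\tau$ cannot be beaten, in accordance with the finiteness assertions of Theorem~\ref{Maintheorem}.

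With this in mind I would choose $\tau_n=(\log n)^{-1/2}$, so that $\tau_n^2\log n=1$ and $\lam_n^{-N}\sim c_0^{-1}(\log n)^{1/2}$; the Taylor expansion then yields
\[
A_n=N\log n-\frac{N}{N-1}+o(1),\qquad |B_{1/(n\lam_n)}|=\frac{\om_{N-1}}{N}(n\lam_n)^{-N}\sim c_1 n^{-N}(\log n)^{1/2},
\]
for some $c_1>0$. Since $\Phi_N(t)/e^t\to 1$ as $t\to\infty$, restricting the integral to the core alone gives
\[
\int_{\R^N}\Phi_N\lt(\be_N(1+\|v_n\|_N^N)^{\frac{1}{N-1}}|v_n|^{\frac{N}{N-1}}\rt)dx\ge\Phi_N(A_n)\,|B_{1/(n\lam_n)}|\gtrsim(\log n)^{1/2}\lra\infty,
\]
which proves $MT(N,\be_N,1)=\infty$. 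The only subtle point---and the main obstacle to finding the construction---is identifying the critical balance $\tau_n\sim(\log n)^{-1/2}$: it simultaneously keeps the sub-leading loss $N\tau_n^2(\log n)/(N-1)$ in $A_n$ bounded while producing a logarithmic gain in the core volume. Both the possibility and the necessity of such a balance are encoded precisely in the double zero of $h(\tau)-1$ at $\tau=0$ for $h(\tau)=(1+2\tau)/(1+\tau)^2$.
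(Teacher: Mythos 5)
Your construction is correct and is essentially the paper's own argument: the paper likewise tests $MT(N,\beta_N,1)$ with the Moser sequence composed with a dilation, normalized in the full Sobolev norm, and exploits exactly the fact that at $\al=1$ the correction factor $(1+2\tau)/(1+\tau)^2$ deviates from $1$ only quadratically in $\tau=\|u\|_N^N$-type quantities, so the dilation gain in volume beats the loss in the exponent. The only cosmetic difference is that the paper keeps the dilation radius $R$ fixed, lets the Moser parameter $k\to\infty$ to get the lower bound $R^N\om_{N-1}/N$, and then lets $R\to\infty$, whereas you couple the dilation to $n$ via the balance $\tau_n\sim(\log n)^{-1/2}$ to produce a single diverging sequence.
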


Let us give the outline of the proof of our main Theorems. By a rearrangement argument, we can restrict ourselves to the decreasing radially, symmetric, nonnegative functions in $W^{1,N}(\R^N)$ to prove Theorem \ref{Maintheorem}. The proof in the subcritical case $\beta < \beta_N$ is inspired by the recent paper of Ishiwata \cite{Ishi}. Following the argument of Ishiwata, we prove an useful lower bound for $MT(N,\beta,\alpha)$ (see Proposition \ref{lowerboundsub} below) which excludes the concentration or the vanishing behavior of maximizing sequences in the subcritical case. From this, we obtain the attainability of $MT(N,\beta,\al)$ for $\beta < \beta_N$. The attainability of $MT(N,\beta_N,\al)$ is proved by the blow--up analysis method as done in \cite{doO2015,doO2016} for the Moser--Trudinger type inequality concerning to function $\Psi_N$. For more about the blow--up analysis method, we refer the reader to the book \cite{Druet} and to the papers \cite{AD2004,CC1986,doO2014*,doO2015,doO2016,Li2001,Li2005,LiRuf2008,LY2016,WY2012,Yang06,Yang06*,Yang09,Yang2015,YZ}. To prove Theorem \ref{Maintheorem2}, as mentioned before, we use the Moser sequence together the scaling argument.

Let us mention here that after this work was done and submitted to arXiv, the author was informed the work of Lu and Zhu \cite{LuZhu} (also on arXiv) in which they proved the statement $(ii)$ of Theorem \ref{Maintheorem} and Theorem \ref{Maintheorem2}. Although these two works are based on the blow-up analysis method, however there is still difference in our proofs. Indeed, in this paper, the author proves the existence of the extremal functions for the subcritical inequalities in the entire space $\R^N$ (this result is nontrivial as mentioned above), and then performs the blow-up process in the entire space $\R^N$. The work of Lu and Zhu \cite{LuZhu} follows the traditional strategy in \cite{LiRuf2008,doO2015}. They first prove the existence of extremal functions for the subcritical inequality in balls centered at origin with radius tending to infinity, and then perform the blow-up process for the sequence of these extremal functions. Note that the existence of extremal functions for the subcritical inequality in the ball is more easily proved than the one in the entire space. Moreover, it seems that there is an incomplete proof in the proof of Lemma $4.3$ in \cite{LuZhu} to exclude the case where the weak limit $u$ of $u_k$ is zero function. Indeed, in \cite{LuZhu}, Lu and Zhu only excluded the situation where $u_k$ is a Sobolev-normalized concentrating sequence. However, there is other situation where $\|u_k\|_N \not\to 0$, for example, when $u_k$ is normalized vanishing sequence (see \cite{Ishi}). In this situation, the conclusion $\int_{\R^n} \Phi(\alpha_k u_k^{\frac n{n-1}}) dx\to \int_{\R^n} \Phi(\alpha_n u^{\frac n{n-1}}) dx = 0$ of Lu and Zhu does not hold. This is one of the main difficulties of this problem and is the reason why do \'O and de Souza considered the modification of $\Phi_N$, that is the function $\Psi_N$ above, to gain the compactness in their arguments. This difficulty was overcome by the author in Lemma \ref{boundedcase} below. 

We conclude this introduction by introducing some notation used throughout this paper. For a subset $\Om$ of $\R^N$, $p\geq 1$ and $u$ is a function defined on $\Om$, we use $\|u\|_{p,\Om}$ and $\|\na u\|_{p,\Om}$ to denote the $L^p$ norms of $u$ and $\na u$ with respect to the Lebesgue measure on $\Om$ respectively, i.e., $\|u\|_{p,\Om} =\lt(\int_{\Om} |u|^p dx\rt)^{1/p}$ and $\|\na u\|_{p,\Om} = \lt(\int_\Om |\na u|^p dx\rt)^{1/p}$. When $\Om =\R^N$, we simply denote by $\|u\|_p$ and $\|\na u\|_p$. The full Sobolev norm of a function $u$ in $W^{1,N}(\R^N)$ is defined by $\|u\|_{W^{1,N}(\R^N)} = \lt(\|\na u\|_N^N + \|u\|_N^N\rt)^{1/N}$.

The rest of this paper is organized as follows. In the next sections \S2, we prove Theorem \ref{Maintheorem} in the subcritical case $\beta < \beta_N$, and consequently we obtain a maximizing sequence of decreasing radially symmetric nonnegative functions for $MT(N,\beta_N,\al)$ for $0\leq \al < 1$. We also prove a nonnexistence result in section \S2. In section \S3, we prove the attainability of $MT(N,\beta_N,\al)$ for any $\al >0$ small by using the method of blow-up analysis. In the last section \S4, we use the Moser sequence and the scaling argument to prove that $MT(N,\beta_N,1) =\infty$ for any $N\geq 2$.

\section{The subcritical case and a nonexistence result in dimension two}
In this section, we consider the case $\beta < \beta_N$. We note that the existence result in this subcritical case is nontrivial since the lacking of compactness due to the unboundedness of the domain of this problem. Indeed, due to this difficulty in the subcritical case, the nonexistence occurs for small $\beta$ when $N=2$. 

Let $\{u_n\}_n\subset W^{1,N}(\R^N)$ be a maximizing sequence for $MT(N,\beta,\al)$. By P\'olya--Szeg\"o principle \cite{Brothers}, we can assume that $u_n$ is decreasing radially symmetric, nonnegative  around. Abusing of notation, we will write $u_n(r)$ for $u_n(x)$ with $r =|x|$ for simplifying notation. By Rellich--Kondrachov theorem, we can assume in addition that $u_n \rightharpoonup u_0$ weakly in $W^{1,N}(\R^N)$, $u_n\to u_0$ in $L^p_{\rm loc}(\R^N)$ for any $p < \infty$ and $u_n\to u_0$ a.e. in $\R^N$.

We first prove a lower bound for $MT(N,\beta,\alpha)$. This bound plays a crucial role in our analysis below. In deed, it excludes the concentration and vanishing behavior of the maximizing sequence $u_n$.

\begin{proposition}\label{lowerboundsub}
Let $N\geq 2$, $\alpha \in [0,1)$ and $\beta \in (0,\beta_N]$. It holds that 
\[
MT(N,\be,\al) > \frac{\beta^{N-1}}{(N-1)!} (1+\alpha)
\]
for $\beta \in (0,\beta_N]$ when $N\geq 3$ and for $\beta \in (\frac{2(1+2\alpha)}{(1+\al)^2B_2},\beta_2]$ when $N=2$ where $B_2$ is given by
\[
B_2 = \sup_{v\in W^{1,2}(\R^2), v\not=0} \frac{\|v\|_4^4}{\|\na v\|_2^2 \|v\|_2^2}.
\]
\end{proposition}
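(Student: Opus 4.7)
The strategy is to construct an explicit family of admissible trial functions that forces $MT(N,\be,\al)$ to strictly exceed $(1+\al)\be^{N-1}/(N-1)!$. Since the leading Taylor coefficient of $\Phi_N$ is $1/(N-1)!$ and the leading term alone realizes the target value when $\|u\|_{W^{1,N}(\R^N)}=1$ and $\|u\|_N^N=1$, the natural family is a \emph{spreading} sequence that drives all the Sobolev mass into the $L^N$ part. Fix any nonzero smooth compactly supported $u\in W^{1,N}(\R^N)$ and, for $\lam\in(0,1]$, set $u_\lam(x)=u(\lam x)$; then $\|\na u_\lam\|_N^N=\|\na u\|_N^N$ while $\|u_\lam\|_N^N=\lam^{-N}\|u\|_N^N$. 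Normalize by
\[
w_\lam \,=\, \frac{u_\lam}{S_\lam^{1/N}}, \qquad S_\lam \,=\, \|\na u\|_N^N + \lam^{-N}\|u\|_N^N,
\]
so that $\|w_\lam\|_{W^{1,N}(\R^N)}=1$. With $a=\|\na u\|_N^N/\|u\|_N^N$, one computes $\|w_\lam\|_N^N=(1+\lam^N a)^{-1}=1-a\lam^N+O(\lam^{2N})$ as $\lam\to 0^+$.

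Next, apply the pointwise inequality $\Phi_N(t)\ge t^{N-1}/(N-1)!+t^N/N!$ on $[0,\infty)$ with $t=\be(1+\al\|w_\lam\|_N^N)^{1/(N-1)}|w_\lam|^{N/(N-1)}$ and integrate. The first piece equals $\frac{\be^{N-1}}{(N-1)!}(1+\al\|w_\lam\|_N^N)\|w_\lam\|_N^N$; Taylor expansion of $(1+\al s)s$ at $s=1$ gives
\[
\frac{\be^{N-1}}{(N-1)!}\Bigl[(1+\al) - (1+2\al)\,a\,\lam^N + O(\lam^{2N})\Bigr].
\]
The second piece uses $\|w_\lam\|_{N^2/(N-1)}^{N^2/(N-1)}=\lam^{N/(N-1)}\,\|u\|_N^{-N^2/(N-1)}\|u\|_{N^2/(N-1)}^{N^2/(N-1)}(1+O(\lam^N))$, contributing
\[
A(u)\,\lam^{N/(N-1)} + O(\lam^{N/(N-1)+N}),\qquad A(u)=\frac{\be^N(1+\al)^{N/(N-1)}}{N!}\cdot\frac{\|u\|_{N^2/(N-1)}^{N^2/(N-1)}}{\|u\|_N^{N^2/(N-1)}} \,>\, 0.
\]

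For $N\ge 3$ one has $N/(N-1)<N$, so the positive $A(u)\,\lam^{N/(N-1)}$ dominates the negative $O(\lam^N)$ correction for all sufficiently small $\lam$; any nonzero $u$ yields the strict lower bound for every $\be\in(0,\be_N]$. For $N=2$ the two exponents collapse to $2$, and the net coefficient of $\lam^2$ is
\[
\tfrac{\be^2(1+\al)^2}{2}\cdot\tfrac{\|u\|_4^4}{\|u\|_2^4} \,-\, \be(1+2\al)\cdot\tfrac{\|\na u\|_2^2}{\|u\|_2^2},
\]
which is strictly positive iff $\frac{\be(1+\al)^2}{2(1+2\al)}\cdot\frac{\|u\|_4^4}{\|\na u\|_2^2\|u\|_2^2}>1$. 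Taking $u$ to be an extremizer of the Gagliardo--Nirenberg ratio in $W^{1,2}(\R^2)$ (attainability by Weinstein) makes this ratio equal to $B_2$, and the condition reduces to the hypothesis $\be>2(1+2\al)/((1+\al)^2B_2)$.

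The only real difficulty is bookkeeping: verifying that the error terms dropped in the two Taylor expansions are of strictly smaller order than the dominant correction $\lam^{N/(N-1)}$ (respectively $\lam^2$ when $N=2$), and, in the planar case, invoking the attainability of $B_2$ to secure a genuine test function realizing the threshold. No compactness or blow-up input is needed; the argument is purely a well-chosen trial function, and the sharp threshold $2(1+2\al)/((1+\al)^2 B_2)$ in dimension two emerges naturally as the point at which the sharp $L^4$-$L^2$-$H^1$ interpolation is just strong enough to overpower the $\lam^2$ defect in the linear part.
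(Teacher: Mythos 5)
Your proposal is correct and is essentially the paper's own argument (following Ishiwata): the trial family $u(\lam x)$, once normalized by the full Sobolev norm, coincides with the paper's dilation family $v_t(x)=t^{1/N}v(t^{1/N}x)$, and both proofs use the bound $\Phi_N(s)\ge \frac{s^{N-1}}{(N-1)!}+\frac{s^N}{N!}$, the same expansion comparing the $\lam^{N/(N-1)}$ gain against the $\lam^N$ loss for $N\ge 3$, and the Gagliardo--Nirenberg extremizer (Weinstein) to handle the coinciding exponents when $N=2$, arriving at the same threshold $\frac{2(1+2\al)}{(1+\al)^2B_2}$.
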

\begin{proof}
We follow the argument of Ishiwata \cite{Ishi}. For $v\in W^{1,N}(\R^N)$ and $t >0$, we introduce a family of functions $v_t$ by
\[
v_t(x) = t^{\frac1N} v(t^{\frac1N}x).
\]
We can easily check that 
\[
\|\na v_t\|_N^N = t\|\na v\|_N^N,\qquad \|v_t\|_p^p = t^{\frac{p-N}N} \|v\|_N^N,
\]
for $p\geq N$. Using this relation, and the inequality $\Phi_N(t) \geq \frac{\beta^{N-1}}{(N-1)!} t^{N-1} + \frac{\beta^N}{N!} t^N$ we have
\begin{align*}
MT(N,\be,\al) &\geq \int_{\R^N} \Phi_N\lt(\beta\lt(1+ \al \frac{\|v_t\|_N^N}{\|v_t\|_{W^{1,N}(\R^N )}^N}\rt)^{\frac1{N-1}} \frac{v_t^{\frac N{N-1}}}{\|v_t\|_{W^{1,N}(\R^N )}^{\frac N{N-1}}}\rt) dx\\
&\geq \frac{\beta^{N-1}}{(N-1)!} \lt(1+ \frac{\al \|v\|_N^N}{t\|\na v\|_N^N +\|v\|_N^N}\rt) \frac{\|v\|_N^N}{t\|\na v\|_N^N +\|v\|_N^N} \\
&\quad + \frac{\beta^N}{N!} \lt(1+ \frac{\al \|v\|_N^N}{t\|\na v\|_N^N +\|v\|_N^N}\rt)^{\frac{N}{N-1}} \frac{t^{\frac1{N-1}} \|v\|_{\frac{N^2}{N-1}}^{\frac{N^2}{N-1}}}{(t\|\na v\|_N^N +\|v\|_N^N)^{\frac N{N-1}}}.
\end{align*} 
It is easily to see that
\begin{align*}
\frac{\beta^{N-1}}{(N-1)!} \lt(1+ \frac{\al \|v\|_N^N}{t\|\na v\|_N^N +\|v\|_N^N}\rt)& \frac{\|v\|_N^N}{t\|\na v\|_N^N +\|v\|_N^N}\\
& =\frac{\beta^{N-1}}{(N-1)!} \lt(1+\al\rt) -t\frac{\beta^{N-1}}{(N-1)!} \lt(1+2\al\rt)\frac{\|\na v\|_N^N}{\|v\|_N^N} +o(t)
\end{align*}
and
\begin{align*}
\frac{\beta^N}{N!} \lt(1+ \frac{\al \|v\|_N^N}{t\|\na v\|_N^N +\|v\|_N^N}\rt)^{\frac{N}{N-1}}& \frac{t^{\frac1{N-1}} \|v\|_{\frac{N^2}{N-1}}^{\frac{N^2}{N-1}}}{(t\|\na v\|_N^N +\|v\|_N^N)^{\frac N{N-1}}}\\
& = \frac{\beta^N}{N!}(1+\al)^{\frac N{N-1}} \frac{\|v\|_{\frac {N^2}{N-1}}^{\frac{N^2}{N-1}}}{\|v\|_N^{\frac {N^2}{N-1}}} t^{\frac 1{N-1}} + O(t^{\frac N{N-1}}).
\end{align*}
as $t\to 0$. Thus we get the conclusion when $N\geq 3$ by choosing $t>0$ small enough. In the case $N =2$, we have from estimates above that
\[
MT(N,\beta,\al) \geq \beta(1+\al) + \frac{\beta}2 \lt(\beta -\frac{2(1+2\al)}{(1+\alpha)^2} \frac{\|\na v\|_2^2 \|v\|_2^2}{\|v\|_4^4} \rt)\frac{\|v\|_2^4}{\|v\|_4^4} t + o(t).
\]
It is well known that $B_2$ is attained by a function $U\in W^{1,2}(\R^2)$ (see \cite{Weinstein}). By taking $v=U$, we obtain the conclusion when $N=2$.
\end{proof}

We next recall the following radial lemma for radial function in $W^{1,N}(\R^N)$.

\begin{lemma}\label{radiallemma}
Let $u\in W^{1,N}(\R^N)$ be a radial function, then
\[
|u^N(r)| \leq \frac{C}{r^{N-1}} \|u\|_{W^{1,N}(\R^N)}
\]
for any $r>0$, where $C$ is constant depending only on $N$.
\end{lemma}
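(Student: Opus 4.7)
The statement is the classical Strauss-type pointwise decay estimate for radial Sobolev functions (the right-hand side must be read as $\|u\|_{W^{1,N}(\R^N)}^N$ so that both sides scale the same way). My plan is to derive it directly from the fundamental theorem of calculus and H\"older's inequality. First, by density of $C_c^\infty(\R^N)$ in $W^{1,N}(\R^N)$, and since the one-dimensional trace of a radial $W^{1,N}$ function lies in $W^{1,N}_{\mathrm{loc}}(0,\infty)$ and is therefore continuous on $(0,\infty)$, it suffices to prove the bound for a smooth, compactly supported radial $u$ and then pass to the limit along a subsequence that converges pointwise almost everywhere.

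For such $u$, since $u(s)\to 0$ as $s\to\infty$, the fundamental theorem of calculus and the chain rule give
\[
|u(r)|^N = -N\int_r^\infty |u(s)|^{N-1}\,\mathrm{sgn}(u(s))\, u'(s)\, ds,
\]
and H\"older's inequality in one variable yields
\[
|u(r)|^N \le N\Bigl(\int_r^\infty |u(s)|^N\, ds\Bigr)^{\frac{N-1}{N}} \Bigl(\int_r^\infty |u'(s)|^N\, ds\Bigr)^{\frac{1}{N}}.
\]
The key step is to upgrade these one-dimensional integrals to the $N$-dimensional $L^N$ norms: since $s^{N-1}\ge r^{N-1}$ whenever $s\ge r$, passing to polar coordinates produces
\[
\int_r^\infty |u(s)|^N\, ds \le \frac{1}{r^{N-1}\,\om_{N-1}}\,\|u\|_N^N,\qquad \int_r^\infty |u'(s)|^N\, ds \le \frac{1}{r^{N-1}\,\om_{N-1}}\,\|\na u\|_N^N.
\]
Combining these inequalities gives $|u(r)|^N\le \frac{N}{r^{N-1}\om_{N-1}}\,\|u\|_N^{N-1}\|\na u\|_N$, and a single application of Young's inequality with conjugate exponents $\frac{N}{N-1}$ and $N$ dominates the product by $\|u\|_N^N+\|\na u\|_N^N=\|u\|_{W^{1,N}(\R^N)}^N$, up to a dimensional constant. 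This yields the stated bound with $C=C_N/\om_{N-1}$.

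The proof is entirely elementary and I anticipate no serious obstacle. The only minor care required is the approximation step, which is routine because on each annulus away from the origin the weighted and unweighted one-dimensional $L^N$ spaces are equivalent, so the radial profile of $u$ can be approximated uniformly on compact subsets of $(0,\infty)$ by smooth compactly supported radial functions, which is enough to transfer the pointwise bound from the smooth case to the general $W^{1,N}(\R^N)$ setting.
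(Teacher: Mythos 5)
Your proof is correct and follows essentially the same route as the paper: the fundamental theorem of calculus applied to $|u|^N$, H\"older's inequality with the weight $s^{N-1}$ absorbed at the cost of the factor $r^{-(N-1)}$, and Young's inequality to reach $\|u\|_{W^{1,N}(\R^N)}^N$ (the paper's statement indeed should carry the $N$-th power, as you observe). The only difference is cosmetic: you handle the behavior at infinity by a density/approximation argument with compactly supported functions, whereas the paper simply picks a sequence $r_n\to\infty$ along which $u(r_n)\to 0$, which avoids the approximation step altogether.
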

\begin{proof}
Since $u\in L^N(\R^N)$, then there exists $r_n\to \infty$ such that $u(r_n) \to 0$. We have
\[
u(r)^N = u(r_n)^N -N\int_r^{r_n} u(s)^{N-1} u'(s) ds.
\]
Hence
\begin{align*}
|u(r)|^N &\leq |u(r_n)|^N + \int_r^{r_n} |u(s)|^{N-1} |u'(s)|ds \\
&\leq |u(r_n)|^N + r^{-N+1} \int_r^{r_n} |u(s)|^{N-1} s^{\frac{(N-1)^2}N} |u'(s)| s^{\frac{N-1}{N}} ds.
\end{align*}
Letting $n\to \infty$ and applying H\"older inequality, we get
\[
|u(r)|^N \leq \frac{C_1}{r^{N-1}} \|\na u\|_N^{\frac1N} \|u\|_{N}^{\frac{N-1}N},
\]
with $C_1$ depends only on $N$. Using Young's inequality, we get the conclusion.
\end{proof}

With Proposition \ref{lowerboundsub} and Lemma \ref{radiallemma} in hand, we are ready to prove the attainability of $MT(N,\beta,\al)$ for $\beta < \beta_N$. The argument below is inspired by the paper of Ishiwata \cite{Ishi}. However, our argument is simpler than the one in \cite{Ishi}.\\

\begin{proof}[Proof of Theorem \ref{Maintheorem} in the subcritical case]
Since $0< \|u_n\|_N^N \leq 1$, we can assume by subtracting a subsequence that $\|u_n\|_N^N \to a \in [0,1]$. For any $R >1$, let us denote $B_R= \{x: |x| < R\}$ and $B_R^c = \R^N \setminus B_R$. We define the new function $v_{n,R}$ on $B_R$ by 
\[
v_{n,R}(r) = u_n(r) -u_n(R),\quad r < R,
\]
then $v_{n,R}\in W_0^{1,N}(B_R)$ and
\[
\|\na v_{n,R}\|_N^N = \int_{B_R} |\na u_n|^N dx =1-\int_{B_R^c} |\na u_n|^N dx -\int_{\R^N} u_n^N dx  \leq 1 -\|u_n\|_N^N,
\]
and
\begin{align}\label{eq:chantrenun}
u_n(r)^{\frac N{N-1}} &\leq (1+\de) v_{n,R}(r)^{\frac N{N-1}} + (1 -(1+\de)^{\frac1{1-N}})^{1-N} u_n(R)^{\frac{N}{N-1}}\notag\\
&\leq (1+\de) v_{n,R}(r)^{\frac N{N-1}} + (1 -(1+\de)^{\frac1{1-N}})^{1-N} \frac C{R},
\end{align}
for any $\de >0$ and $0< r < R$, here we use Lemma \ref{radiallemma}. 

Denote $w_{n,R} = v_{n,R}/ \|\na v_{n,R}\|_N$, we have from \eqref{eq:chantrenun} that
\begin{align*}
\Phi_N(\beta (1+\al \|u_n\|_N^N)^{\frac1{N-1}} u_n^{\frac N{N-1}}) &\leq e^{\beta (1+\al \|u_n\|_N^N)^{\frac1{N-1}} u_n^{\frac N{N-1}}}\\
&\leq e^{\beta(1+\al)(1 -(1+\de)^{\frac1{1-N}})^{1-N} \frac C{R}} e^{\beta(1+\de) (1+\al \|u_n\|_N^N)^{\frac1{N-1}} v_{n,R}^{\frac N{N-1}}}\\
&\leq e^{\beta(1+\al)(1 -(1+\de)^{\frac1{1-N}})^{1-N} \frac C{R}} e^{\beta(1+\de)  w_{n,R}^{\frac N{N-1}}},
\end{align*}
on $B_R$. Choosing $\de >0$ sufficiently small such that $\beta(1+\de) < \beta_N$. By classical Moser--Trudinger inequality \eqref{eq:usualMT}, we conclude that $e^{\beta(1+\de) w_n^{\frac N{N-1}}}$ is bounded in $L^q(B_R)$ for some $q >1$. Consequently, $\Phi_N(\beta (1+\al \|u_n\|_N^N)^{\frac1{N-1}} u_n^{\frac N{N-1}})$ is bounded in $L^q(B_R)$ for some $q >1$. Since $\|u_n\|_N^N \to a$ and $u_n\to u_0$ a.e., then we have
\begin{equation}\label{eq:trunlimit1}
\lim_{n\to\infty}\int_{B_R} \Phi_N(\beta (1+\al \|u_n\|_N^N)^{\frac1{N-1}} u_n^{\frac N{N-1}})dx = \int_{B_R} \Phi_N(\beta(1+ \al a)^{\frac1{N-1}} u_0^{\frac N{N-1}}) dx.
\end{equation}
Obviously,
\begin{equation}\label{eq:trunlimit2}
\lim_{n\to\infty} \int_{B_R(0)} (1+\al \|u_n\|_N^N) u_n^{N} dx = (1+\al a) \int_{B_R(0)} u_0^N dx.
\end{equation}
In the other hand, using again Lemma \ref{radiallemma}, we have for $r\geq R$
\begin{align*}
\Phi_N(\beta (1+\al \|u_n\|_N^N)^{\frac1{N-1}} &u_n(r)^{\frac N{N-1}}) -\frac{\beta^{N-1}}{(N-1)!} (1+ \al \|u_n\|_N^N) u_n(r)^{N}\\
&=\sum_{k=N}^{\infty} \frac{\beta^k}{k!} (1+ \al \|u_n\|_N^N)^{\frac k{N-1}} u_n(r)^{\frac{kN}{N-1}} \\
&\leq \sum_{k=N}^{\infty} \frac{\beta^k}{k!} (1+ \al)^{\frac k{N-1}} \lt(\frac C{R^{N-1}}\rt)^{\frac{k-N+1}{N-1}} u_n(r)^N\\
&\leq \frac1{CR} e^{\beta (1+\al)^{\frac1{N-1}} C^{\frac 1{N-1}}} u_n(r)^N\\
&=\frac{C'}{R} u_n(r)^N,
\end{align*}
with $C'$ independent of $n$ and $R$. This implies
\begin{equation}\label{eq:outsideBR}
\lim_{R\to\infty}\lim_{n\to\infty} \int_{B_R^c} \Bigg(\Phi_N(\beta (1 +\al \|u_n\|_N^N)^{\frac1{N-1}} u_n^{\frac N{N-1}}) - \frac{\beta^{N-1}}{(N-1)!} (1+ \al \|u_n\|_N^N) u_n^{N}\Bigg)dx =0.
\end{equation}
Since,
\begin{align*}
\int_{\R^N} \Bigg(\Phi_N(&\beta (1 +\al \|u_n\|_N^N)^{\frac1{N-1}} u_n^{\frac N{N-1}}) - \frac{\beta^{N-1}}{(N-1)!} (1+ \al \|u_n\|_N^N) u_n^{N}\Bigg)dx\\
&= \int_{B_R} \Bigg(\Phi_N(\beta (1 +\al \|u_n\|_N^N)^{\frac1{N-1}} u_n^{\frac N{N-1}}) - \frac{\beta^{N-1}}{(N-1)!} (1+ \al \|u_n\|_N^N) u_n^{N}\Bigg)dx\\
&\quad + \int_{B_R^c} \Bigg(\Phi_N(\beta (1 +\al \|u_n\|_N^N)^{\frac1{N-1}} u_n^{\frac N{N-1}}) - \frac{\beta^{N-1}}{(N-1)!} (1+ \al \|u_n\|_N^N) u_n^{N}\Bigg)dx.
\end{align*}
Letting $n\to\infty$ and then $R\to \infty$, and using \eqref{eq:trunlimit1}, \eqref{eq:trunlimit2} and \eqref{eq:outsideBR}, we obtain
\begin{equation}\label{eq:dangthucsup}
MT(N,\beta,\alpha) = \int_{\R^N}\Phi_N( \beta(1+ \al a)^{\frac1{N-1}} u_0^{\frac N{N-1}}) dx +\frac{\beta^{N-1}}{(N-1)!}(1+\alpha a)(a -\|u_0\|_N^N).
\end{equation}
If $u_0 \equiv 0$, then 
\[
MT(N,\beta,\alpha)= \frac{\beta^{N-1}}{(N-1)!}(1+\alpha a)a\leq \frac{\beta^{N-1}}{(N-1)!}(1+\alpha),
\]
which is impossible by Proposition \ref{lowerboundsub}. Hence $u_0\not\equiv0$. Define $\tau^N = a/\|u_0\|_N^N \geq 1$, and $v_0(x) = u_0(x/\tau)$. We have
\[
\|v_0\|_N^N = \|u_0\|_N^N \tau^N = a,\quad\text{ and }\quad \|\na v_0\|_N^N = \|\na u_0\|_N^N.
\]
Hence
\[
1 =\liminf_{n\to\infty} (\|\na u_n\|_N^N + \|u_n\|_N^N)\geq \liminf_{n\to\infty} \|\na u_n\|_N^N + \lim_{n\to\infty} \|u_n\|_N^N \geq \|v_0\|_{W^{1,N}(\R^N)}^N.
\]
Hence
\begin{align*}
MT(N,\beta,\al) &\geq \int_{\R^N} \Phi_N(\beta (1 +\al \|v_0\|_N^N )^{\frac1{N-1}} v_0^{\frac{N}{N-1}}) dx\\
&=\tau^N \int_{\R^N} \Phi_N(\beta (1+\al a)^{\frac1{N-1}} u_0^{\frac N{N-1}}) dx\\
&=\int_{\R^N} \Phi_N(\beta (1+\al a)^{\frac1{N-1}} u_0^{\frac N{N-1}}) dx + (\tau^N-1)\frac{\beta^{N-1}}{(N-1)!}(1+ \al a) \|u_0\|_N^N\\
&\quad +(\tau^N-1) \int_{\R^N}\Bigg(\Phi_N(\beta (1 +\al a)^{\frac1{N-1}} u_0^{\frac N{N-1}}) - \frac{\beta^{N-1}}{(N-1)!} (1+ \al a) u_0^{N}\Bigg) dx\\
&=\int_{\R^N} \Phi_N(\beta (1+\al a)^{\frac1{N-1}} u_0^{\frac N{N-1}}) dx + \frac{\beta^{N-1}}{(N-1)!}(1+ \al a)(a- \|u_0\|_N^N)\\
&\quad +(\tau^N-1) \int_{\R^N}\Bigg(\Phi_N(\beta (1 +\al a)^{\frac1{N-1}} u_0^{\frac N{N-1}}) - \frac{\beta^{N-1}}{(N-1)!} (1+ \al a) u_0^{N}\Bigg) dx\\
&=MT(N,\beta,\al) + (\tau^N-1) \int_{\R^N}\Bigg(\Phi_N(\beta (1 +\al a)^{\frac1{N-1}} u_0^{\frac N{N-1}})\\
&\hspace{8cm} - \frac{\beta^{N-1}}{(N-1)!} (1+ \al a) u_0^{N}\Bigg) dx,
\end{align*}
here we use \eqref{eq:dangthucsup}. Since $u_0\not\equiv 0$ then $\tau =1$, or $a = \|u_0\|_N^N$. Using again \eqref{eq:dangthucsup}, we have
\[
MT(N,\beta,\alpha) = \int_{\R^N}\Phi_N( \beta(1+ \al \|u_0\|_N^N)^{\frac1{N-1}} u_0^{\frac N{N-1}}) dx.
\]
This together the fact $\|u_0\|_{W^{1,N}(\R^N)} \leq 1$ implies $\|u_0\|_{W^{1,N}(\R^N)} =1$ and hence $u_0$ is a maximizer for $MT(N,\beta,\alpha)$.
\end{proof}

We conclude this section by proving the nonexistence of extremal functions for \eqref{eq:doOdeSouza*} in dimension two when $\beta >0$ sufficiently small. We follow the argument in \cite{Ishi}. 

\begin{proof}[Proof of nonexistence result in dimension two]
Let us recall the Moser--Trudinger type inequality in $\R^2$ of Adachi and Tanaka (see \cite{Adachi}),
\[
C_\beta = \sup_{u\in W^{1,2}(\R^2), u\not=0, \|\na u\|_2 \leq 1} \frac{ \|\na u\|_2^2}{\|u\|_2^2} \int_{\R^2} \lt(e^{\beta \frac{u^2}{\|\na u\|_2^2}} -1\rt) dx < \infty,
\]
for any $0 < \beta < 4\pi$ and $C_\be$ depends only on $\beta$. From this we have
\[
\frac{\beta^j}{j!} \frac{\|u\|_{2j}^{2j}}{\|\na u\|_2^{2j}} \leq C_\beta \frac{\|u\|_2^2}{\|\na u\|_2^2},
\]
for any $j \geq 1$, or equivalently
\begin{equation}\label{eq:GN}
\|u\|_{2j}^{2j} \leq C_\be \frac{j!}{\beta^j} \|\na u\|_2^{2(j-1)} \|u\|_2^2,\quad j\geq 1.
\end{equation}
Let $\mathcal M = \{u\in W^{1,2}(\R^2): \|u\|_{W^{1,2}(\R^2)} =1\}$. For any $v\in \mathcal M$, define for $t >0$
\[
v_t(x) = t^{\frac12} v(t^{\frac12} x),\quad w_t = \frac {v_t}{\|v_t\|_{W^{1,2}(\R^2)}}.
\]
Then $w_t \in \mathcal M$. Denote
\[
J[u] = \int_{\R^2} \lt(e^{\beta(1+\al \|u\|_2^2) u^2} -1\rt) dx.
\]
If $v$ is a maximizer for $MT(2,\beta,\al)$ then $v\in \mathcal M$. Note that $w_t$ is a curve in $\mathcal M$, $w_1 =v$, hence
\begin{equation}\label{eq:critical}
\frac{d}{dt}J[w_t]\bigg|_{t=1} =0.
\end{equation}
From the definition of $v_t$, we have $\|v_t\|_p^p = t^{\frac{p-2}2} \|v\|_p^p$, $\|\na v_t\|_2^2 =t\|\na v\|_2^2$, we then have
\begin{align*}
J[w_t] &= \sum_{j=1}^\infty \frac{\beta^j}{j!} \lt(1+\al \frac{\|v\|_2^2}{t\|\na v\|_2^2 + \|v\|_2^2}\rt)^j \frac{t^{j-1}\|v\|_{2j}^{2j}}{(t\|\na v\|_2^2 + \|v\|_2^2)^j}.
\end{align*}
Hence, in view of $\|v\|_{W^{1,2}(\R^2)} =1$ and \eqref{eq:GN} we have
\begin{align*}
\frac{d}{dt} J[w_t] \bigg|_{t=1} &= \sum_{j=1}^\infty \frac{\beta^j}{j!}(1+\al\|v\|_2^2)^{j-1} \|v\|_{2j}^{2j}\Big(-j\al \|v\|_2^2 \|\na v\|_2^2 \\
&\hspace{6cm}+(j-1) (1+\al \|v\|_2^2) -j\|\na v\|_2^2(1+ \al \|v\|_2^2\Big)\\
&\leq-\beta \|v\|_2^2 \|\na v\|_2^2 + \sum_{j\geq 2} \frac{(2\beta)^j}{(j-1)!}  \|v\|_{2j}^{2j}\\
&\leq \|v\|_2^2 \|\na v\|_2^2\lt(-\beta + \sum_{j\geq 2} \frac{(2\beta)^j}{(j-1)!} C_\gamma \frac{j!}{\gamma^j}\|v\|_2^{2j-4} \rt)\\
&\leq \|v\|_2^2 \|\na v\|_2^2\lt(-\beta +C_\gamma \sum_{j\geq 2} \frac{(2\beta)^j}{\gamma^j} j\rt)\\
&\leq \beta \|v\|_2^2 \|\na v\|_2^2\lt(-1 + \beta\frac{4C_\gamma}{\gamma^2} \sum_{j\geq 0} (j+ 2)\frac{(2\beta)^j}{\gamma^j}\rt)
\end{align*}
for any $0 < \gamma < 4\pi$. Choose $\gamma =3\pi$, for $\beta < \pi$, we then have
\begin{align*}
\frac{d}{dt} J[w_t] \bigg|_{t=1} & \leq \beta \|v\|_2^2 \|\na v\|_2^2\lt(-1 + \beta\frac{4C_{3\pi}}{(3\pi)^2} \sum_{j\geq 0} (j+ 2)\frac{2^j}{3^j}\rt) = \beta \|v\|_2^2 \|\na v\|_2^2\lt(-1 + C \beta\rt),
\end{align*}
with $C =\frac{4C_{3\pi}}{(3\pi)^2} \sum_{j\geq 0} (j+ 2)\frac{2^j}{3^j}$. Hence,
\[
\frac{d}{dt} J[w_t] \bigg|_{t=1} <0
\]
for any $\beta < \max\{\pi, 1/C\}$. This contradicts with \eqref{eq:critical}, hence there is no $v$ which is critical point of $J$ in $\mathcal M$. This completes our proof.
\end{proof}

\section{The critical case}
For $\ep >0$ small enough, we have known that $MT(N,\beta_N-\ep,\al)$ is attained by a function $u_\ep \in H^{1,N}(\R^N)$ with $\|u_\ep\|_{H^{1,N}(\R^N)} =1$ and
\[
MT(N,\beta_N -\ep,\alpha) = \int_{\R^N} \Phi_N((\beta_N -\ep)(1+\al \|u_\ep\|_N^N) |u_\ep|^{\frac N{N-1}}) dx.
\]
We can assume that $u_\ep$ is decreasing radially symmetric, nonnegative. We use again the notation $u_\ep(r)$ instead of $u_\ep(x)$ with $r =|x|$. A straightforward compuation shows that the Euler--Lagrange equation of $u_\ep$ is given by
\begin{equation}\label{eq:EL}
\begin{cases}
-\De_N u_\ep + u_\ep^{N-1} = \frac{\al_\ep}{\lam_\ep} u_\ep^{\frac1{N-1}} \Phi_N'(\beta_\ep(1+\al \|u_\ep\|_N^N)^{\frac1{N-1}} u_\ep^{\frac N{N-1}}) + \gamma_\ep u_\ep^{N-1}&\mbox{in $\R^N$,}\\
u_\ep \in W^{1,N}(\R^N),\quad u_\ep >0,\quad \|\na u_\ep\|_N^N + \|u_\ep\|_N^N =1,\\
\beta_\ep = \beta_N -\ep, \quad \alpha_\ep =\frac{1+ \al \|u_\ep\|_N^N}{1+2\al\|u_\ep\|_N^N},\quad \gamma_\ep =\frac{\al}{1+2\al\|u_\ep\|_N^N},\\
\lam_\ep = \int_{\R^N} \Phi_N'(\beta_\ep(1+\al \|u_\ep\|_N^N)^{\frac1{N-1}} u_\ep^{\frac N{N-1}}) u_\ep^{\frac{N}{N-1}} dx,
\end{cases}
\end{equation}
here $-\De_N u_\ep = \text{\rm div}(|\na u_\ep|^{N-2} \na u_\ep)$ denotes the $N-$Laplace of $u_\ep$. Applying the standard elliptic estimates (see, e.g., \cite{Serrin64,Tolksdorf}) to \eqref{eq:EL} we have $u_\ep \in C^1(\R^N)$.

First we prove that $\lam_\ep$ is bounded away from zero. More precisely, we will prove the following.

\begin{lemma}\label{awayzero}
Let $\lam_\ep$ be defined as in \eqref{eq:EL}. Then it holds $\liminf_{\ep \to 0} \lam_\ep >0$.
\end{lemma}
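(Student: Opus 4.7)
The idea is to control $\lambda_\ep$ from below by the Moser--Trudinger functional itself, and then invoke the lower bound for $MT(N,\beta,\al)$ already established in Proposition \ref{lowerboundsub}. The starting point is the elementary pointwise inequality
\[
t\,\Phi_N'(t) \ge (N-1)\,\Phi_N(t), \qquad t\ge 0,
\]
which follows directly from the Taylor expansion $\Phi_N(t)=\sum_{k\ge N-1}t^k/k!$ by writing $t\,\Phi_N'(t)=\sum_{k\ge N-1}(k/k!)\,t^k$ and using $k\ge N-1$ term by term. I would then apply this with $t=t_\ep(x):=\beta_\ep(1+\al\|u_\ep\|_N^N)^{1/(N-1)}u_\ep(x)^{N/(N-1)}$, noting that by construction
\[
\Phi_N'(t_\ep)\,u_\ep^{\frac{N}{N-1}} \;=\; \frac{t_\ep\,\Phi_N'(t_\ep)}{\beta_\ep(1+\al\|u_\ep\|_N^N)^{1/(N-1)}} \;\ge\; \frac{(N-1)\,\Phi_N(t_\ep)}{\beta_\ep(1+\al\|u_\ep\|_N^N)^{1/(N-1)}}.
\]

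Integrating over $\R^N$ and recalling that $u_\ep$ is an extremizer for $MT(N,\beta_\ep,\al)$, I obtain
\[
\lam_\ep \;\ge\; \frac{N-1}{\beta_\ep\,(1+\al\|u_\ep\|_N^N)^{1/(N-1)}}\;MT(N,\beta_\ep,\al).
\]
Since $0\le \|u_\ep\|_N^N\le 1$ and $\beta_\ep=\beta_N-\ep$ is bounded above by $\beta_N$, the prefactor is bounded below by the positive constant $(N-1)\,\beta_N^{-1}(1+\al)^{-1/(N-1)}$. For the remaining factor, Proposition \ref{lowerboundsub} yields
\[
MT(N,\beta_\ep,\al) \;>\; \frac{\beta_\ep^{N-1}}{(N-1)!}(1+\al) \;\ge\; \frac{(\beta_N/2)^{N-1}}{(N-1)!}(1+\al)
\]
for all sufficiently small $\ep>0$ (the hypotheses of Proposition \ref{lowerboundsub} are met: for $N\ge3$ they hold for every $\be\le\beta_N$, and for $N=2$ they hold provided $\al$ is small enough so that $2(1+2\al)/((1+\al)^2B_2)<\beta_2$, which is precisely the regime of the critical case covered by Theorem \ref{Maintheorem}(ii)). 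Combining the two estimates gives $\liminf_{\ep\to0}\lam_\ep>0$.

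No serious obstacle is expected; the only point worth checking carefully is that the dimensional ranges in Proposition \ref{lowerboundsub} are compatible with the critical-case assumption $0\le\al<\al_0$ of Theorem \ref{Maintheorem}(ii), which is why shrinking $\al_0$ if necessary is harmless. The whole argument is essentially a one-line Taylor comparison followed by the lower bound on $MT(N,\beta_\ep,\al)$, and does not require any blow-up information about $u_\ep$.
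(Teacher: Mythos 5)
Your argument is correct and is essentially the paper's own proof: the same pointwise comparison $t\,\Phi_N'(t)\ge \Phi_N(t)$ (you use the slightly sharper constant $N-1$, which is fine) reduces the claim to a lower bound on $MT(N,\beta_\ep,\al)$ uniform in small $\ep$. The only variation is at the last step, where the paper establishes $\lim_{\ep\to0}MT(N,\beta_\ep,\al)=MT(N,\beta_N,\al)$ via Fatou's lemma and then uses positivity of $MT(N,\beta_N,\al)$, while you invoke Proposition \ref{lowerboundsub} directly for $\beta_\ep$; both work, and your remark about $N=2$ is only needed to ensure $\beta_\ep$ stays above the threshold $2(1+2\al)/((1+\al)^2B_2)$ for small $\ep$, which is automatic in the regime where the subcritical extremals $u_\ep$ exist.
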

\begin{proof}
We first claim that
\begin{equation}\label{eq:cont}
\lim_{\ep\to 0} MT(N,\beta_\ep,\al) = MT(N,\beta_N,\ep).
\end{equation}
Indeed, it is evident that $MT(N,\beta_\ep,\al) \leq MT(N,\beta_N,\al)$, then
\begin{equation}\label{eq:limsupMT}
\limsup_{\ep\to 0} MT(N,\beta_\ep,\al) \leq MT(N,\beta_N,\al).
\end{equation}
In the other hand, for any $u\in W^{1,N}(\R^N)$ with $\|u\|_{W^{1,N}(\R^N)}\leq 1$, applying Fatou's lemma we get
\[
\liminf_{\ep\to 0} \int_{\R^N} \Phi_N(\beta_\ep (1+\al\|u\|_N^N)^{\frac1{N-1}} |u|^{\frac N{N-1}}) dx \geq \int_{\R^N} \Phi_N(\beta_N (1+\al\|u\|_N^N)^{\frac1{N-1}} |u|^{\frac N{N-1}}) dx,
\]
which implies
\[
\liminf_{\ep\to 0} MT(N,\beta_\ep,\al) \geq \int_{\R^N} \Phi_N(\beta_N (1+\al\|u\|_N^N)^{\frac1{N-1}} |u|^{\frac N{N-1}}) dx.
\]
Taking the supremum over $u\in W^{1,N}(\R^N)$ with $\|u\|_{W^{1,N}(\R^N)}\leq 1$, we get
\begin{equation}\label{eq:liminfMT}
\liminf_{\ep\to 0} MT(N,\beta_\ep,\al) \geq MT(N,\beta_N,\al).
\end{equation}
Combining \eqref{eq:limsupMT} and \eqref{eq:liminfMT}, we obtain our claim \eqref{eq:cont}.

Note that
\[
t \Phi_N'(t) = \sum_{k=N-1}^{\infty} \frac{t^k}{(k-1)!} \geq \sum_{k=N-1}^\infty \frac{t^k}{k!} = \Phi_N(t), \quad t \geq 0,
\]
which then implies
\begin{align*}
\lam_\ep &= \int_{\R^N} \Phi_N'(\beta_\ep(1+\al \|u_\ep\|_N^N)^{\frac1{N-1}} u_\ep^{\frac N{N-1}}) u_\ep^{\frac{N}{N-1}} dx\\
&\geq \frac{1}{\beta_\ep (1+ \al \|u_\ep\|_N^N)^{\frac1{N-1}}} MT(N,\beta_\ep,\al)\\
&\geq \frac{1}{\beta_\ep (1+ \al)^{\frac1{N-1}}} MT(N,\beta_\ep,\al),
\end{align*}
here we use $\|u_\ep\|_N^N \geq 1$. Letting $\ep \to 0$ and using our claim \eqref{eq:cont}, we obtain
\[
\liminf_{\ep\to 0} \geq \frac{1}{\beta_N (1+\al)^{\frac1{N-1}}} MT(N,\beta_N,\al) >0,
\]
as wanted.
\end{proof}

Since $\{u_\ep\}_\ep$ is bounded in $W^{1,N}(\R^N)$, we can assume that $u_\ep \rightharpoonup  u_0$ weakly in $W^{1,N}(\R^N)$, $u_\ep \to u_0$ in $L^p_{\rm loc}(\R^N)$ for any $p < \infty$, $u_\ep \to u_0$ a.e. in $\R^N$, and $\|u_\ep\|_N^N \to a \in [0,1]$. In the sequel, we do not distinguish the sequence and subsequence, the interest reader should understand it from the context. Let 
\[
c_\ep = u_\ep(0) =\max_{x\in \R^N} u_\ep(x). 
\]
We have the following result.

\begin{lemma}\label{boundedcase}
If $c_\ep$ is bounded, then $MT(N,\beta_N,\al)$ is attained.
\end{lemma}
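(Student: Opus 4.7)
The plan is to pass to the limit $\ep \to 0$ in the identity
\[
MT(N,\beta_\ep,\al) = \int_{\R^N}\Phi_N\bigl(\beta_\ep(1+\al\|u_\ep\|_N^N)^{\frac1{N-1}}u_\ep^{\frac N{N-1}}\bigr)dx
\]
and show that the weak limit $u_0$ is a maximizer of $MT(N,\beta_N,\al)$. The hypothesis $c_\ep = \|u_\ep\|_\infty \le M$ is exactly what allows one to mimic the subcritical argument at the critical exponent: the full exponent $\beta_\ep(1+\al\|u_\ep\|_N^N)^{\frac1{N-1}}u_\ep^{\frac N{N-1}}$ is then bounded pointwise by $\beta_N(1+\al)^{\frac1{N-1}}M^{\frac N{N-1}}$, so the Moser--Trudinger control used in the proof of Theorem \ref{Maintheorem} for the subcritical case can be replaced here by a direct $L^\infty$ bound.

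First I would handle a fixed ball $B_R$: Lebesgue's dominated convergence, combined with $u_\ep \to u_0$ a.e.\ and in $L^p_{\rm loc}(\R^N)$ and with $\|u_\ep\|_N^N \to a$, gives
\[
\lim_{\ep \to 0}\int_{B_R}\Phi_N\bigl(\beta_\ep(1+\al\|u_\ep\|_N^N)^{\frac1{N-1}}u_\ep^{\frac N{N-1}}\bigr)dx =\int_{B_R}\Phi_N\bigl(\beta_N(1+\al a)^{\frac1{N-1}}u_0^{\frac N{N-1}}\bigr)dx.
\]
Next, on the complement $B_R^c$, the radial decay $u_\ep(r)^N \le C/r^{N-1}$ from Lemma \ref{radiallemma} (valid uniformly in $\ep$ since $\|u_\ep\|_{W^{1,N}(\R^N)} = 1$) combined with the termwise Taylor expansion of $\Phi_N$ yields, exactly as in \eqref{eq:outsideBR},
\[
\lim_{R\to\infty}\limsup_{\ep \to 0}\int_{B_R^c}\Bigl(\Phi_N\bigl(\beta_\ep(1+\al\|u_\ep\|_N^N)^{\frac1{N-1}}u_\ep^{\frac N{N-1}}\bigr) - \frac{\beta_\ep^{N-1}}{(N-1)!}(1+\al\|u_\ep\|_N^N)u_\ep^N\Bigr)dx = 0.
\]
Combining these with the continuity $MT(N,\beta_\ep,\al)\to MT(N,\beta_N,\al)$ established inside the proof of Lemma \ref{awayzero} yields the critical analogue of \eqref{eq:dangthucsup}:
\[
MT(N,\beta_N,\al) = \int_{\R^N}\Phi_N\bigl(\beta_N(1+\al a)^{\frac1{N-1}}u_0^{\frac N{N-1}}\bigr)dx + \frac{\beta_N^{N-1}}{(N-1)!}(1+\al a)(a-\|u_0\|_N^N).
\]

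From this identity the conclusion follows by the same two moves as in the subcritical proof. The case $u_0 \equiv 0$ is ruled out because it would give $MT(N,\beta_N,\al) = \frac{\beta_N^{N-1}}{(N-1)!}(1+\al a)a \le \frac{\beta_N^{N-1}}{(N-1)!}(1+\al)$ (since $a \in [0,1]$ and $\al \ge 0$ force $(1+\al a)a \le 1+\al$), contradicting Proposition \ref{lowerboundsub}, whose strict lower bound is valid at $\beta = \beta_N$ under the standing hypotheses of the paper. Next, the scaled test function $v_0(x) = u_0(x/\tau)$ with $\tau^N = a/\|u_0\|_N^N \ge 1$ satisfies $\|v_0\|_{W^{1,N}(\R^N)}^N \le 1$ by weak lower-semicontinuity of $\|\na\cdot\|_N^N$, and inserting it into $MT(N,\beta_N,\al)$ and comparing with the identity above forces $\tau = 1$, i.e.\ $\|u_0\|_N^N = a$. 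Together with $\|\na u_0\|_N^N \le 1-a$ this yields $\|u_0\|_{W^{1,N}(\R^N)} = 1$, and $u_0$ attains $MT(N,\beta_N,\al)$.

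The main obstacle is securing the tail estimate on $B_R^c$ uniformly in $\ep$: the hypothesis $c_\ep \le M$, plus Lemma \ref{radiallemma} applied with the \emph{same} full Sobolev norm for every $u_\ep$, is what makes the decay constant independent of $\ep$, and the $L^\infty$ bound is what justifies termwise Taylor expansion of $\Phi_N$ under the integral. Once this uniformity is in place, the proof reduces to the scaling-plus-lower-bound argument already carried out in the subcritical case.
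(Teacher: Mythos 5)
Your proposal is correct and follows essentially the same route as the paper's proof: the same inside/outside-$B_R$ decomposition (dominated convergence on $B_R$, the radial-lemma tail estimate on $B_R^c$, and the continuity $MT(N,\beta_\ep,\al)\to MT(N,\beta_N,\al)$) leading to the identity $MT(N,\beta_N,\al)=\int_{\R^N}\Phi_N\bigl(\beta_N(1+\al a)^{\frac1{N-1}}u_0^{\frac N{N-1}}\bigr)dx+\frac{\beta_N^{N-1}}{(N-1)!}(1+\al a)(a-\|u_0\|_N^N)$, exclusion of $u_0\equiv 0$ via Proposition \ref{lowerboundsub}, and the scaling argument forcing $a=\|u_0\|_N^N$; the only (harmless) deviation is that you justify the convergence on $B_R$ by the uniform bound $c_\ep\le M$ and dominated convergence, whereas the paper uses elliptic estimates and $C^1_{\rm loc}$ convergence. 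One small slip at the end: $\|u_0\|_N^N=a$ together with $\|\na u_0\|_N^N\le 1-a$ only gives $\|u_0\|_{W^{1,N}(\R^N)}\le 1$, which already suffices for attainment as stated in the lemma; the equality $\|u_0\|_{W^{1,N}(\R^N)}=1$ follows instead from maximality, since otherwise rescaling $u_0$ to unit Sobolev norm would strictly increase the functional.
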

\begin{proof}
Since $c_\ep$ is bounded, we have $u_\ep \to u_0$ in $C^1_{\rm loc}(\R^N)$ by applying standard elliptic estimates to \eqref{eq:EL}. For any $R >0$ using Lemma \ref{radiallemma} and the argument in proof of \eqref{eq:outsideBR}, we get
\begin{equation}\label{eq:outsideBR*}
\lim_{R\to\infty}\lim_{\ep\to 0} \int_{B_R^c} \lt(\Phi_N(\beta_\ep (1+\al\|u_\ep\|_N^N)^{\frac1{N-1}} u_\ep^{\frac N{N-1}}) -\frac{\beta_\ep^{N-1}}{(N-1)!} (1+\al \|u_\ep\|_N^N) u_\ep^N \rt) dx =0.
\end{equation}
Since $u_\ep \to u_0$ in $C^1(\o{B_R})$, then it holds
\begin{align}\label{eq:insideBR*}
\lim_{R\to\infty} \lim_{\ep\to 0} \int_{B_R} &\lt(\Phi_N(\beta_\ep (1+\al\|u_\ep\|_N^N)^{\frac1{N-1}} u_\ep^{\frac N{N-1}}) -\frac{\beta_\ep^{N-1}}{(N-1)!} (1+\al \|u_\ep\|_N^N) u_\ep^N \rt) dx\notag\\
&=\lim_{R\to\infty} \int_{B_R} \lt(\Phi_N(\beta_N (1+\al a)^{\frac1{N-1}} u_0^{\frac N{N-1}}) -\frac{\beta_N^{N-1}}{(N-1)!} (1+\al a) u_0^N \rt) dx\notag\\
&=\int_{\R^N} \lt(\Phi_N(\beta_N (1+\al a)^{\frac1{N-1}} u_0^{\frac N{N-1}}) -\frac{\beta_N^{N-1}}{(N-1)!} (1+\al a) u_0^N \rt) dx
\end{align}
Combining \eqref{eq:outsideBR*} and \eqref{eq:insideBR*}, we have
\begin{align}\label{eq:hihi}
MT(N,\beta_N,\al) &=\int_{\R^N} \Phi_N(\beta_N (1+\al a)^{\frac1{N-1}} u_0^{\frac N{N-1}}) dx + \frac{\beta_N^{N-1}}{(N-1)!} (1+\al a) (a -\|u_0\|_N^N)
\end{align}
If $u_0 \equiv 0$, then
\[
MT(N,\beta_N,\alpha) = \frac{\beta_N^{N-1}}{(N-1)!} (1+\al a) a \leq \frac{\beta_N^{N-1}}{(N-1)!} (1+\al),
\]
which contradicts with Proposition \ref{lowerboundsub}. Hence $u_0\not\equiv 0$. Denote $\tau^N = a/\|u_0\|_N^N \geq 1$, and define the new function $v_0(x) = u_0(x/\tau)$ then $\|v_0\|_N^N = \tau^N \|u_0\|_N^N =a$ and using the lower semi-continuity of the Sobolev norm under the weak convergence, we have $\|v_0\|_{W^{1,N}(\R^N)} \leq 1$. By definition of $MT(N,\beta_N,\al)$, we then have
\begin{align*}
MT(N,\beta_N,\al) &\geq \int_{\R^N} \Phi_N(\beta_N (1 +\al \|v_0\|_N^N )^{\frac1{N-1}} v_0^{\frac{N}{N-1}}) dx\\
&=\tau^N \int_{\R^N} \Phi_N(\beta_N (1+\al a)^{\frac1{N-1}} u_0^{\frac N{N-1}}) dx\\
&=\int_{\R^N} \Phi_N(\beta_N (1+\al a)^{\frac1{N-1}} u_0^{\frac N{N-1}}) dx + (\tau^N-1)\frac{\beta_N^{N-1}}{(N-1)!}(1+ \al a) \|u_0\|_N^N\\
&\quad +(\tau^N-1) \int_{\R^N}\Bigg(\Phi_N(\beta_N (1 +\al a)^{\frac1{N-1}} u_0^{\frac N{N-1}}) - \frac{\beta_N^{N-1}}{(N-1)!} (1+ \al a) u_0^{N}\Bigg) dx\\
&=\int_{\R^N} \Phi_N(\beta_N (1+\al a)^{\frac1{N-1}} u_0^{\frac N{N-1}}) dx + \frac{\beta_N^{N-1}}{(N-1)!}(1+ \al a)(a- \|u_0\|_N^N)\\
&\quad +(\tau^N-1) \int_{\R^N}\Bigg(\Phi_N(\beta_N (1 +\al a)^{\frac1{N-1}} u_0^{\frac N{N-1}}) - \frac{\beta_N^{N-1}}{(N-1)!} (1+ \al a) u_0^{N}\Bigg) dx\\
&=MT(N,\beta_N,\al) + (\tau^N-1) \int_{\R^N}\Bigg(\Phi_N(\beta_N (1 +\al a)^{\frac1{N-1}} u_0^{\frac N{N-1}})\\
&\hspace{8cm} - \frac{\beta_N^{N-1}}{(N-1)!} (1+ \al a) u_0^{N}\Bigg) dx,
\end{align*}
here we use \eqref{eq:hihi}. Since $u_0\not\equiv 0$, hence $\tau =1$ or equivalently $a =\|u_0\|_N^N$. Using again \eqref{eq:hihi}, we get 
\begin{equation}\label{eq:hihi*}
MT(N,\beta_N,\al) =\int_{\R^N} \Phi_N(\beta_N (1+\al \|u_0\|_N^N)^{\frac1{N-1}} u_0^{\frac N{N-1}}) dx.
\end{equation}
Note that $\|u_0\|_{W^{1,N}(\R^N)} \leq 1$ by the lower semi-continuity of the Sobolev norm under the weak convergence. This fact together with \eqref{eq:hihi*} implies $\|u_0\|_{W^{1,N}(\R^N)} =1$ and hence $u_0$ is a maximizer for $MT(N,\beta_N,\al)$. This finishes our proof.
\end{proof}

In the sequel, we assume that $c_\ep \to \infty$. Under this assumption, we have the following result.

\begin{lemma}\label{azero}
It holds $|\na u_\ep|^N dx \rightharpoonup  \de_0$ weakly in the sense of measure. Consequently, we have $u_0 \equiv 0$, $\al_\ep \to 1$ and $\gamma_\ep \to \al$ as $\ep \to 0$.
\end{lemma}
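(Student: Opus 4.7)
The plan is to argue by contradiction, using a localized subcritical Moser--Trudinger bound together with $N$-Laplace elliptic regularity applied to \eqref{eq:EL}. First, extract a weak-$*$ subsequential limit $\mu$ of $|\na u_\ep|^N\,dx$, viewed as nonnegative Radon measures with total mass at most $1$. The whole lemma reduces to the single quantitative statement: for every $r>0$, $\int_{B_r}|\na u_\ep|^N\,dx\to 1$ as $\ep\to 0$. Once this is granted, testing against cutoff functions equal to $1$ near $0$ yields $\mu=\de_0$; the normalization $\|u_\ep\|_{W^{1,N}(\R^N)}^N=1$ then forces $\|u_\ep\|_N^N\to 0$, and hence $u_0\equiv 0$ by Fatou, while $\al_\ep\to 1$ and $\ga_\ep\to\al$ follow immediately by substitution in their defining formulas from \eqref{eq:EL}.

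For the key claim, suppose on the contrary there exist $r_0>0$ and $\de>0$ with $\int_{B_{r_0}}|\na u_\ep|^N\,dx\le 1-\de$ along a subsequence. Write $u_\ep=v_\ep+u_\ep(r_0)$ on $B_{r_0}$, so $v_\ep\in W^{1,N}_0(B_{r_0})$ with $\|\na v_\ep\|_N^N\le 1-\de$. By Lemma \ref{radiallemma} the boundary value $u_\ep(r_0)^N \le C/r_0^{N-1}$ is uniformly bounded, and Young's inequality gives
\[
u_\ep^{\frac N{N-1}}\le (1+\eta)\,v_\ep^{\frac N{N-1}}+C(\eta)
\]
for any $\eta>0$. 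Applying the classical sharp Moser--Trudinger inequality \eqref{eq:usualMT} on $B_{r_0}$ to $v_\ep/\|\na v_\ep\|_N$, and using the strict gap $\be_\ep<\be_N$ together with the smallness of $\al$ built into Theorem \ref{Maintheorem}(ii), one may choose $\eta>0$ small and some $q>1$ satisfying
\[
\be_\ep\,q\,(1+\eta)\,(1+\al\|u_\ep\|_N^N)^{\frac1{N-1}}\,(1-\de)^{\frac1{N-1}}<\be_N,
\]
so that
\[
\int_{B_{r_0}}\Phi_N\!\lt(\be_\ep q(1+\al\|u_\ep\|_N^N)^{\frac1{N-1}}u_\ep^{\frac N{N-1}}\rt)dx\le C.
\]
Combining this with Lemma \ref{awayzero} ($\lam_\ep$ bounded away from $0$) and the obvious boundedness of $\al_\ep,\ga_\ep$, the right-hand side of \eqref{eq:EL} lies in $L^q(B_{r_0/2})$ for some $q>1$. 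The Serrin--Tolksdorf $C^{1,\al}$ regularity for the $N$-Laplacian then yields a uniform $L^\infty$ bound for $u_\ep$ on $B_{r_0/4}$; since $c_\ep=u_\ep(0)$ is attained in this ball, this contradicts $c_\ep\to\infty$.

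The crux and main technical obstacle is the balancing of constants in the displayed exponent inequality: the amplification factor $(1+\al\|u_\ep\|_N^N)^{1/(N-1)}$ can be as large as $(1+\al)^{1/(N-1)}>1$, and it must be absorbed against the subcritical margin $(1-\de)^{1/(N-1)}<1$ together with the strict gap $\be_\ep<\be_N$, whose latter margin shrinks to zero as $\ep\to 0$. This is precisely where the smallness assumption $\al<\al_0$ of Theorem \ref{Maintheorem}(ii) plays its role. Modulo this balancing, the remaining steps (the choice of $r_0$, the splitting $u_\ep=v_\ep+u_\ep(r_0)$, and the elliptic-regularity closing step) are standard in the blow-up analysis, paralleling the corresponding steps in \cite{doO2015,LiRuf2008}.
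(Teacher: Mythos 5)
Your overall architecture is the same as the paper's (argue by contradiction, split $u_\ep=v_\ep+u_\ep(r_0)$ on a ball using Lemma \ref{radiallemma}, apply the localized Moser--Trudinger inequality to get a uniform $L^q$ bound for the nonlinearity, then use Lemma \ref{awayzero} and Serrin--Tolksdorf estimates on \eqref{eq:EL} to contradict $c_\ep\to\infty$), but the step you yourself identify as the crux is left open, and the two ingredients you propose to close it with do not suffice. The gap $\be_N-\be_\ep$ tends to zero, so it provides no margin uniform in $\ep$; and the smallness $\al<\al_0$ is useless here, because $\al_0$ is fixed in advance (it comes from the test-function computation of Lemma \ref{almostdone}, a completely unrelated consideration), whereas the deficit $\de$ produced by your contradiction hypothesis may be arbitrarily small. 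Concretely, if $\|u_\ep\|_N^N\to a$ with $\al a\ge \de/(1-\de)$ (nothing at this stage rules this out), then $(1+\al\|u_\ep\|_N^N)(1-\de)\ge 1$, and since $\be_\ep\to\be_N$ no choice of $q>1$ and $\eta>0$ makes your displayed exponent subcritical, so the uniform $L^q$ bound fails. Note also that Lemma \ref{azero} is proved in the paper for every $0\le\al<1$; no smallness of $\al$ enters the blow-up analysis until Lemma \ref{almostdone}.

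The missing idea is to couple the amplification factor with the gradient deficit through the normalization $\|\na u_\ep\|_N^N+\|u_\ep\|_N^N=1$, rather than estimating them separately. Since $v_\ep^{\frac N{N-1}}=\|\na v_\ep\|_{N,B_{r_0}}^{\frac N{N-1}}w_\ep^{\frac N{N-1}}$ with $w_\ep=v_\ep/\|\na v_\ep\|_{N,B_{r_0}}$, the relevant coefficient is
\[
(1+\al\|u_\ep\|_N^N)\,\|\na v_\ep\|_{N,B_{r_0}}^N
=(1+\al-\al\|\na u_\ep\|_N^N)\,\|\na v_\ep\|_{N,B_{r_0}}^N
\le (1+\al-\al\|\na v_\ep\|_{N,B_{r_0}}^N)\,\|\na v_\ep\|_{N,B_{r_0}}^N,
\]
and the function $s\mapsto (1+\al-\al s)s$ is increasing on $[0,1]$ for $\al<1$, so under your hypothesis $\|\na v_\ep\|_{N,B_{r_0}}^N\le 1-\de=:\mu$ this is at most $(1+\al-\al\mu)\mu<1$, a margin independent of $\ep$ and valid for every $\al<1$ and every $\mu<1$. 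With this, $\be_\ep(1+\eta)\bigl[(1+\al\|u_\ep\|_N^N)\|\na v_\ep\|_{N,B_{r_0}}^N\bigr]^{\frac1{N-1}}$ stays below some $\be'<\be_N$ for $\eta$ small and $\ep$ small, the classical inequality \eqref{eq:usualMT} applies to $w_\ep$, and the rest of your argument (uniform $L^q$ bound for $\Phi_N'$, $L^s$ bound for the right-hand side of \eqref{eq:EL} via Lemma \ref{awayzero}, elliptic regularity, contradiction with $c_\ep\to\infty$) goes through exactly as in the paper. Your derivation of the consequences ($\|u_\ep\|_N^N\to 0$, hence $u_0\equiv 0$, $\al_\ep\to 1$, $\ga_\ep\to\al$) is fine.
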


\begin{proof}
Indeed, if  $|\na u_\ep|^N dx \not\rightharpoonup \de_0$ weakly in the sense of measure, then there exists $R >0$ and $\mu < 1$ such that
\[
\lim_{\ep \to 0} \int_{B_R} |\na u_\ep|^N dx < \mu < 1.
\]
Define $v_{\ep,R}(r) = u_\ep(r) -u_{\ep}(R)$ for $r < R$. We have $v_{\ep,R} \in W^{1,N}_0(B_R)$,
\[
\int_{B_R}|\na v_{\ep,R}|^N dx =\int_{B_R}|\na u_\ep|^N dx
\]
and by Lemma \ref{radiallemma}
\begin{align*}
u_\ep(r)^{\frac N{N-1}} &\leq (1+\de) v_{\ep,R}(r)^{\frac N{N-1}} + (1-(1+\de)^{\frac1{1-N}})^{1-N} u_\ep(R)^{\frac N{N-1}}\\
&\leq (1+\de) v_{\ep,R}(r)^{\frac N{N-1}} + (1-(1+\de)^{\frac1{1-N}})^{1-N} \frac{C}R
\end{align*}
for any $\de >0$.

Define $w_{\ep,R} = v_{\ep,R}/\|\na v_{\ep,R}\|_N$. On $B_R$, we have
\begin{align*}
\Phi_N'(\beta_\ep (1+\al \|u_\ep\|_N^N)^{\frac1{N-1}} u_\ep^{\frac N{N-1}}) &\leq e^{\beta_\ep (1+\al \|u_\ep\|_N^N)^{\frac1{N-1}} u_\ep^{\frac N{N-1}}}\\
&\leq e^{\beta_N(1+\al)(1 -(1+\de)^{\frac1{1-N}})^{1-N} \frac C{R}} e^{\beta_\ep(1+\de) (1+\al \|u_\ep\|_N^N)^{\frac1{N-1}} v_{\ep,R}^{\frac N{N-1}}}\\
&\leq C_N(R,\de) e^{\beta_\ep(1+\de)[(1+\al\|u_\ep\|_N^N) \|\na v_{\ep,R}\|_N^N]^{\frac1{N-1}}  w_{\ep,R}^{\frac N{N-1}}},
\end{align*}
with
\[
C_N(R,\de) =e^{\beta_N(1+\al)(1 -(1+\de)^{\frac1{1-N}})^{1-N} \frac C{R}}.
\]
Note that
\[
(1+\al\|u_\ep\|_N^N) \|\na v_{\ep,R}\|_N^N = (1+ \al -\al\|\na u_\ep\|_N^N)\|\na v_{\ep,R}\|_N^N \leq (1+ \al -\al\|\na v_{\ep,R}\|_N^N)\|\na v_{\ep,R}\|_N^N.
\]
We then have
\[
\lim_{\ep \to 0} \beta_\ep (1+\de)(1+\al\|u_\ep\|_N^N) \|\na v_{\ep,R}\|_N^N \leq \beta_N (1+\de)(1+ \al -\al \mu) \mu < \beta_N.
\]
for $\de >0$ small enough since $\al, \mu < 1$. Applying the classical Moser--Trudinger inequality \eqref{eq:usualMT}, we get that $e^{\beta_\ep(1+\de)[(1+\al\|u_\ep\|_N^N) \|\na v_{\ep,R}\|_N^N]^{\frac1{N-1}}  w_{\ep,R}^{\frac N{N-1}}}$ is bounded in $L^q(B_R)$ for some $q >1$. Therefore, $\Phi_N'(\beta_\ep (1+\al \|u_\ep\|_N^N)^{\frac1{N-1}} u_\ep^{\frac N{N-1}})$ is bounded in $L^q(B_R)$ for some $q > 1$. Since $u_\ep$ is bounded in $L^p(B_R)$ for any $p < \infty$. This together H\"older inequality shows that the function
\[
f_\ep = \frac{\al_\ep}{\lam_\ep} u_\ep^{\frac1{N-1}} \Phi_N'(\beta_\ep(1+\al \|u_\ep\|_N^N)^{\frac1{N-1}} u_\ep^{\frac N{N-1}}) + (\gamma_\ep-1) u_\ep^{N-1}
\]
is bounded in $L^s(B_R)$ for some $s >1$, here we use Lemma \ref{awayzero}. By the standard elliptic estimates to \eqref{eq:EL}, $u_\ep$ is uniformly bounded in $B_{R/2}$ which contradicts to $c_\ep \to \infty$. Then $|\na u_\ep|^N dx \rightharpoonup  \de_0$ weakly in the sense of measure.

Since $\|\na u_\ep\|_N^N + \|u_\ep\|_N^N =1$, we then have $\|u_\ep\|_N^N \to 0$ as $\ep \to 0$. Hence $u_\ep \to 0$ in $L^N(\R^N)$ which forces $u_0\equiv 0$. Consequently, $\al_\ep \to 1$ and $\gamma_\ep \to \al$ as $\ep \to 0$.
\end{proof}

Define 
\[
r_\ep^N = \frac{\lam_\ep}{\al_\ep} c_\ep^{-\frac N{N-1}} e^{-\beta_\ep(1+\al \|u_\ep\|_N^N)^{\frac1{N-1}} c_\ep^{\frac N{N-1}}}.
\]
We claim that

\begin{lemma}\label{eq:reptoinfinite}
It holds $\lim_{\ep \to 0} r_\ep^N =0.$
\end{lemma}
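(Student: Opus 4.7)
The plan is to show that $\lam_\ep$ grows at most polynomially in $c_\ep$, so that the exponential factor $e^{-\beta_\ep(1+\al\|u_\ep\|_N^N)^{1/(N-1)}c_\ep^{N/(N-1)}}$ dominates and forces $r_\ep^N \to 0$. The essential input is the boundedness below of $\al_\ep$ from Lemma \ref{azero} (indeed $\al_\ep \to 1$) together with the hypothesis $c_\ep \to \infty$ and the bound $\int_{\R^N}\Phi_N(\beta_\ep(1+\al\|u_\ep\|_N^N)^{1/(N-1)}u_\ep^{N/(N-1)})\,dx \le MT(N,\beta_N,\al) <\infty$.

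The key identity I would use is
\[
\Phi_N'(t) = \Phi_N(t) + \frac{t^{N-2}}{(N-2)!},\qquad t\ge 0,
\]
which follows directly from the series representations $\Phi_N(t)=\sum_{k\ge N-1}t^k/k!$ and $\Phi_N'(t) = \sum_{j\ge N-2}t^j/j!$. Writing $s_\ep := \beta_\ep(1+\al\|u_\ep\|_N^N)^{1/(N-1)} u_\ep^{N/(N-1)}$, this decomposes $\lam_\ep$ as
\[
\lam_\ep = \int_{\R^N}\Phi_N(s_\ep)\, u_\ep^{N/(N-1)}\,dx + \frac{1}{(N-2)!}\int_{\R^N} s_\ep^{N-2}\, u_\ep^{N/(N-1)}\, dx.
\]

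I would bound the two pieces separately. For the first, use $u_\ep \le c_\ep$ pointwise and the a priori Moser--Trudinger bound to get
\[
\int_{\R^N}\Phi_N(s_\ep)\, u_\ep^{N/(N-1)}\,dx \le c_\ep^{N/(N-1)} \int_{\R^N}\Phi_N(s_\ep)\,dx \le MT(N,\beta_N,\al)\cdot c_\ep^{N/(N-1)}.
\]
For the second, direct simplification gives
\[
s_\ep^{N-2}\, u_\ep^{N/(N-1)} = \bigl(\beta_\ep(1+\al\|u_\ep\|_N^N)^{1/(N-1)}\bigr)^{N-2}\, u_\ep^{\,N},
\]
and the prefactor is uniformly bounded by $(\beta_N(1+\al)^{1/(N-1)})^{N-2}$, so this integral is controlled by a constant times $\|u_\ep\|_N^N \le 1$. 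Combining the two estimates yields a bound of the shape
\[
\lam_\ep \;\le\; C\, c_\ep^{N/(N-1)}
\]
valid for all sufficiently small $\ep$, with $C$ independent of $\ep$.

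Plugging into the definition of $r_\ep^N$ and using $\al_\ep \to 1 > 0$,
\[
r_\ep^N \;\le\; \frac{C}{\al_\ep}\, e^{-\beta_\ep(1+\al\|u_\ep\|_N^N)^{1/(N-1)} c_\ep^{N/(N-1)}} \;\longrightarrow\; 0,
\]
since $c_\ep\to\infty$ and $\beta_\ep\to\beta_N>0$ force the exponent to tend to $-\infty$. I do not expect any serious obstacle; the only mild subtlety is verifying the pointwise identity for $\Phi_N'$ (including the edge case $N=2$, where the extra term is $1$ and $\Phi_2'(t)=e^t = \Phi_2(t)+1$). Everything else reduces to combining an upper bound on $\int \Phi_N(s_\ep)\,dx$ (which is exactly the finite Moser--Trudinger supremum) with the trivial $L^N$ bound $\|u_\ep\|_N\le 1$.
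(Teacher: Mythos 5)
Your proof is correct, but it takes a different and more elementary route than the paper. You bound $\lam_\ep$ globally by writing $\Phi_N'(t)=\Phi_N(t)+t^{N-2}/(N-2)!$, estimating $\int_{\R^N}\Phi_N(s_\ep)u_\ep^{N/(N-1)}dx\le c_\ep^{N/(N-1)}MT(N,\beta_N,\al)$ via $u_\ep\le c_\ep$, and observing that the remaining term is just a constant times $\|u_\ep\|_N^N\le 1$; this gives $\lam_\ep\le C c_\ep^{N/(N-1)}$ and the exponential in the definition of $r_\ep^N$ does the rest (note that $\al_\ep\ge 1/2$ always, so you do not even need Lemma \ref{azero} here). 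This is essentially the computation the paper performs later, in Lemma \ref{uppersup}, to show $\limsup_{\ep\to 0}\lam_\ep/c_\ep^{N/(N-1)}\le MT(N,\beta_N,\al)$. The paper instead proves the lemma by splitting the integral defining $\lam_\ep$ into $B_R$ and $B_R^c$, using the radial Lemma \ref{radiallemma}, a truncation, and the local Moser--Trudinger inequality on $B_R$; the payoff of that longer argument is the quantitatively stronger statement \eqref{eq:conclude}, namely $r_\ep^N c_\ep^{N/(N-1)}e^{\gamma c_\ep^{N/(N-1)}}\to 0$ for every $\gamma<\beta_N$, which is what is actually invoked later in the blow-up analysis (e.g.\ in \eqref{eq:vocungnho1} and in the convergence of $\vphi_\ep$). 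Your bound in fact delivers \eqref{eq:conclude} as well with one extra line, since $\beta_\ep(1+\al\|u_\ep\|_N^N)^{1/(N-1)}-\gamma\ge \beta_N-\ep-\gamma$ is eventually bounded below by a positive constant and the resulting exponential decay beats the polynomial factor $c_\ep^{N/(N-1)}$; it would be worth stating that explicitly if your proof is to replace the paper's, since the lemma as literally stated ($r_\ep^N\to 0$) is weaker than what the subsequent sections use.
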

\begin{proof}
For any $0< \gamma < \beta_N$ and $R >0$, we have
\begin{align}\label{eq:splitrep}
r_\ep^N c_\ep^{\frac N{N-1}}e^{\gamma c_\ep^{\frac{N}{N-1}}}&= \frac{e^{(\gamma-\beta_\ep(1+\al\|u_\ep\|_N^N)^{\frac1{N-1}})c_\ep^{\frac N{N-1}}}}{\al_\ep }\int_{B_R}u_\ep^{\frac N{N-1}} \Phi_N'(\beta_\ep(1+\al\|u_\ep\|_N^N)^{\frac1{N-1}} u_\ep^{\frac N{N-1}}) dx\notag\\
&\quad + \frac{e^{(\gamma-\beta_\ep(1+\al\|u_\ep\|_N^N)^{\frac1{N-1}})c_\ep^{\frac N{N-1}}}}{\al_\ep}\int_{B_R^c}u_\ep^{\frac N{N-1}} \Phi_N'(\beta_\ep(1+\al\|u_\ep\|_N^N)^{\frac1{N-1}} u_\ep^{\frac N{N-1}}) dx\notag\\
&= I + II.
\end{align}
We first estimate $I$. Since $\|u_\ep\|_N \to 0$ and $\beta_\ep \to \beta_N$, then for $\ep >0$ small enough, we have
\begin{align*}
I&\leq \frac{e^{(\gamma-\beta_\ep(1+\al\|u_\ep\|_N^N)^{\frac1{N-1}})c_\ep^{\frac N{N-1}}}}{\al_\ep}\int_{B_R}u_\ep^{\frac N{N-1}} e^{\beta_\ep(1+\al\|u_\ep\|_N^N)^{\frac1{N-1}} u_\ep^{\frac N{N-1}}} dx\\
&\leq \frac1{\al_\ep}\int_{B_R}u_\ep^{\frac N{N-1}} e^{\gamma u_\ep^{\frac N{N-1}}} dx.
\end{align*}
Denote $v_\ep = u_\ep - u_\ep(R)$. We have $v_\ep \in W^{1,N}_0(B_R)$, $\|\na v_\ep\|_N \leq 1$ and by Lemma \ref{radiallemma}, there exists $C$ depends only on $N$ such that
\[
u_\ep(r)^{\frac N{N-1}} \leq (1+ \de) v_\ep^{\frac N{N-1}} + (1-(1+\de)^{\frac1{1-N}})^{1-N} \frac C R,
\]
for any $\de >0$. Therefore
\[
I \leq \frac{C_N(\de,R)}{\al_\ep} \int_{B_R}u_\ep^{\frac N{N-1}} e^{\gamma(1+ \de) v_\ep^{\frac N{N-1}}} dx,
\]
with
\[
C_N(\de,R) = e^{\gamma(1-(1+\de)^{\frac1{1-N}})^{1-N} \frac C R}.
\]
Choosing $\de >0$ small enough such that $\de \gamma < \beta_N$, and then applying H\"older inequality, the classical Moser--Trudinger inequality and the facts $u_\ep \to 0$ in $L^p_{\rm loc}(\R^N)$ for any $p < \infty$ and $\al_\ep \to 1$, we see that
\begin{equation}\label{eq:trongBR}
I = o_\ep(R),
\end{equation}
here $o_{\ep}(R)$ means that $\lim_{\ep\to 0} o_\ep(R) =0$ for $R$ is fixed.

For $II$, we note that
\[
t \Phi_N'(t) = \sum_{k=N-1}^\infty \frac{t^k}{(k-1)!}.
\]
Hence, for any $A >0$, there exists $C(A)$ such that $t\Phi_N'(t) \leq C(A) t^k$ for any $t \leq A$. Applying Lemma \ref{radiallemma}, there exists $C$ depending only on $N$ such that 
\[
u_\ep^{\frac{N}{N-1}}(r) \leq \frac C R,
\]
for any $r \geq $. Since $\beta_\ep(1+\al\|u_\ep\|_N^N)^{\frac1{N-1}} \leq 2^{\frac1{N-1}} \beta_N$, then there exists $C_N(R)$ depending only on $N$ and $R$ such that
\[
u_\ep^{\frac N{N-1}} \Phi_N'(\beta_\ep(1+\al\|u_\ep\|_N^N)^{\frac1{N-1}} u_\ep^{\frac N{N-1}}) dx \leq C_N(R) u_\ep^N,\quad\text{\rm on }\quad B_R^c.
\]
Thus, we get
\begin{equation}\label{eq:ngoaiBR}
II \leq C_N(R)\frac{e^{(\gamma -\beta_\ep(1+\al \|u_\ep\|_N^N)^{\frac1{N-1}})c_\ep^{\frac{N}{N-1}}}}{\al_\ep} =o_\ep(R).
\end{equation}
Combining \eqref{eq:trongBR} and \eqref{eq:ngoaiBR}, we get
\begin{equation}\label{eq:conclude}
\lim_{\ep\to 0} r_\ep^N c_\ep^{\frac N{N-1}}e^{\gamma c_\ep^{\frac{N}{N-1}}} =0,
\end{equation}
for any $0< \gamma < \beta_N$ which proves this lemma.\\
\end{proof}

We next define two new sequences of functions on $\R^N$ by
\begin{equation}\label{eq:blowupsequences}
\psi_\ep(x) = \frac{u_\ep(r_\ep x)}{c_\ep},\quad \vphi_\ep(x) = c_\ep^{\frac1{N-1}}(u_\ep(r_\ep x) -c_\ep),\quad x\in \R^N.
\end{equation}
From \eqref{eq:EL} and definition of $r_\ep$, we see that $\psi_\ep$ satisfies
\begin{align}\label{eq:psiepequation}
-\De_N \psi_\ep &= \frac1{c_\ep^N} \psi_\ep^{\frac1{N-1}} e^{\beta_\ep c_\ep^{\frac N{N-1}}(1+ \al \|u_\ep\|_N^N)^{\frac1{N-1}} (\psi_\ep^{\frac N{N-1}} -1)} + (\gamma_\ep -1)r_\ep^N \psi_\ep^{N-1} \notag\\
&\quad -\frac{e^{-\beta_\ep(1+\al\|u_\ep\|_N^N)^{\frac1{N-1}} c_\ep^{\frac N{N-1}}}}{c_\ep^N} \psi_\ep^{\frac1{N-1}} \sum_{k=0}^{N-3}\frac{ \beta_\ep^k (1+\al\|u_\ep\|_N^N)^{\frac k{N-1}}}{k!} c_\ep^{\frac{kN}{N-1}} \psi_\ep^{\frac{kN}{N-1}},
\end{align}
with remark that the last term does not appear if $N=2$. Since $\psi_\ep \leq 1$, applying the standard elliptic estimates to \eqref{eq:psiepequation}, we get that $\psi_\ep \to \psi$ in $C^1_{\rm loc}(\R^N)$ with $\psi$ satisfies $-\De_N \psi =0$. Obviously, $0\leq \psi \leq 1$, applying Liouville-type theorem for $N-$harmonic function, we conclude that $\psi\equiv 1$. Thus we have proved
\begin{lemma}\label{limpsiep}
It holds $\psi_\ep \to 1$ in $C^1_{\rm loc}(\R^N)$.
\end{lemma}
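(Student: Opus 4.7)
The plan is to apply the standard quasilinear elliptic regularity theory to \eqref{eq:psiepequation} to extract $C^1_{\mathrm{loc}}$ convergence of $\psi_\ep$, identify the limit as an $N$-harmonic function on $\R^N$, and then use a Liouville-type argument together with the normalization $\psi_\ep(0)=1$ to conclude $\psi\equiv 1$.

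First I would check that the right-hand side of \eqref{eq:psiepequation} is uniformly bounded in $L^\infty_{\mathrm{loc}}(\R^N)$ (in fact it tends to $0$ uniformly on compacta). By construction $\psi_\ep(0)=1$ and $\psi_\ep\leq 1$ everywhere, because $c_\ep=u_\ep(0)=\max u_\ep$. Thus $\psi_\ep^{N/(N-1)}-1\leq 0$, so the exponential in the first term is at most $1$, and the first term is bounded by $c_\ep^{-N}\to 0$. The second term $(\gamma_\ep-1)r_\ep^N\psi_\ep^{N-1}$ tends to $0$ uniformly, since $\gamma_\ep\to\al$ (Lemma \ref{azero}) and $r_\ep^N\to 0$ (Lemma \ref{eq:reptoinfinite}). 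For the third term (present only when $N\geq 3$), each summand contains the factor
\[
c_\ep^{-N}\,e^{-\beta_\ep(1+\al\|u_\ep\|_N^N)^{1/(N-1)}c_\ep^{N/(N-1)}}c_\ep^{kN/(N-1)},\qquad 0\leq k\leq N-3,
\]
which tends to $0$ because the exponential decay dominates every polynomial in $c_\ep$; the factor $\psi_\ep^{(k+1)N/(N-1)-1}\leq 1$ is harmless. Hence the whole right-hand side of \eqref{eq:psiepequation} tends to $0$ in $L^\infty_{\mathrm{loc}}(\R^N)$.

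Next, since $\|\psi_\ep\|_\infty\leq 1$ and $-\De_N\psi_\ep$ is uniformly bounded on each compact set, the interior $C^{1,\alpha}$ estimates of Tolksdorf and the references already invoked for \eqref{eq:EL} (see \cite{Serrin64,Tolksdorf}) give a uniform $C^{1,\alpha}_{\mathrm{loc}}(\R^N)$ bound on $\psi_\ep$. By Arzel\`a--Ascoli and a diagonal extraction, up to subsequence $\psi_\ep\to\psi$ in $C^1_{\mathrm{loc}}(\R^N)$ for some $\psi\in C^1(\R^N)$ satisfying $0\leq\psi\leq 1$, $\psi(0)=1$, and, passing to the limit in the distributional formulation of \eqref{eq:psiepequation},
\[
-\De_N\psi=0\qquad\text{in }\R^N.
\]

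Finally, I would invoke the Liouville theorem for $N$-harmonic functions on $\R^N$: any bounded weak solution of $-\De_N\psi=0$ on all of $\R^N$ is constant. Combined with $\psi(0)=1$ this yields $\psi\equiv 1$. Since the limit is uniquely determined, the convergence $\psi_\ep\to 1$ in $C^1_{\mathrm{loc}}(\R^N)$ holds without extracting a subsequence. The main (and only) delicate point is the bound on the first term of \eqref{eq:psiepequation}: the exponent $\beta_\ep c_\ep^{N/(N-1)}(1+\al\|u_\ep\|_N^N)^{1/(N-1)}(\psi_\ep^{N/(N-1)}-1)$ is a priori very large, but the sign restriction $\psi_\ep\leq 1$ together with the prefactor $c_\ep^{-N}$ renders the term not only bounded but vanishing; once this is in hand, everything else is standard elliptic regularity plus Liouville.
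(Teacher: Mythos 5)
Your proof is correct and follows essentially the same route as the paper: bound the right-hand side of \eqref{eq:psiepequation} using $\psi_\ep\leq 1$, $c_\ep\to\infty$, $r_\ep^N\to 0$, apply the standard quasilinear elliptic estimates to get $C^1_{\rm loc}$ convergence to an $N$-harmonic limit, and conclude by the Liouville theorem. Your version is in fact slightly more complete, since you make explicit the normalization $\psi_\ep(0)=1$ that pins the constant to $1$, a point the paper leaves implicit.
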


Similarly, $\vphi_\ep$ satisfies
\begin{align}\label{eq:vphiepequation}
-\De_N \vphi_\ep &= \psi_\ep^{\frac1{N-1}} e^{\beta_\ep c_\ep^{\frac N{N-1}}(1+ \al \|u_\ep\|_N^N)^{\frac1{N-1}} (\psi_\ep^{\frac N{N-1}} -1)} + (\gamma_\ep -1)r_\ep^N c_\ep u_\ep^{N-1} \notag\\
&\quad -e^{-\beta_\ep(1+\al\|u_\ep\|_N^N)^{\frac1{N-1}} c_\ep^{\frac N{N-1}}}\psi_\ep^{\frac1{N-1}} \sum_{k=0}^{N-3}\frac{ \beta_\ep^k (1+\al\|u_\ep\|_N^N)^{\frac k{N-1}}}{k!} u_\ep^{\frac{kN}{N-1}},
\end{align}
with the remark that the last term does not appear if $N=2$. Recall that $\psi_\ep \leq 1$. Applying \eqref{eq:conclude} and the standard elliptic estimates to \eqref{eq:vphiepequation}, we get that $\vphi_\ep \to \vphi$ in $C^1_{\rm loc}(\R^N)$ for some function $\vphi\in C^1(\R^N)$. Fix $R >0$, we have known that 
\[
\sup_{x\in B_R} |\psi_\ep(x) -1| = o_\ep(R).
\]
We also have
\begin{align*}
c_\ep^{\frac N{N-1}}(\psi_\ep(x)^{\frac{N}{N-1}} -1) = \frac N{N-1} \vphi_\ep  + O(|\vphi_\ep(x)| |\psi_\ep -1|)
\end{align*}
for $x\in B_R$, hence
\begin{equation}\label{eq:uniformvphi}
c_\ep^{\frac N{N-1}}(\psi_\ep(x)^{\frac{N}{N-1}} -1) \to \frac{N}{N-1} \vphi,
\end{equation}
uniformly on $B_R$. Thus, we obtain from \eqref{eq:vphiepequation} that
\begin{equation}
-\De_N \vphi = e^{-\frac{N}{N-1} \beta_N \vphi}.
\end{equation}
In the other hand, for any $R > 0$, we have $u_\ep(x) = c_\ep(1+o_\ep(R))$ uniformly on $B_{Rr_\ep}$ by Lemma \ref{limpsiep}. Thus, for any $p$, we have
\begin{equation}\label{eq:vocungnho1}
\frac{\int_{B_{Rr_\ep}} u_\ep^p dx}{\int_{B_{Rr_\ep}} e^{\beta_\ep (1+\al \|u_\ep\|_N^N)^{\frac1{N-1}} u_\ep^{\frac N{N-1}}} dx} = o_\ep(R),
\end{equation}
here we use again \eqref{eq:conclude}. Therefore
\begin{align*}
\int_{B_R} e^{-\frac N{N-1} \beta_N \vphi} dx &= \lim_{\ep\to 0} \int_{B_R} e^{\beta_\ep (1+ \al\|u_\ep\|_N^N)^{\frac1{N-1}}( u_\ep(r_\ep x)^{\frac N{N-1}} -c_\ep^{\frac N{N-1}})} dx\\
&=\lim_{\ep\to 0} \al_\ep c_\ep^{\frac N{N-1}}\frac{\int_{B_{Rr_\ep}} e^{\beta_\ep(1+ \alpha\|u_\ep\|_N^N)^{\frac1{N-1}} u_\ep^{\frac N{N-1}}} dx}{\int_{\R^N} u_\ep^{\frac N{N-1}} \Phi_N'(\beta_\ep (1+\al \|u_\ep\|_N^N)^{\frac1{N-1}} u_\ep^{\frac N{N-1}})dx}\\
&\leq \lim_{\ep\to 0} \al_\ep\frac{c_\ep^{\frac N{N-1}}\int_{B_{Rr_\ep}} e^{\beta_\ep(1+ \alpha\|u_\ep\|_N^N)^{\frac1{N-1}} u_\ep^{\frac N{N-1}}} dx}{\int_{B_{Rr_\ep}} u_\ep^{\frac N{N-1}} \Phi_N'(\beta_\ep (1+\al \|u_\ep\|_N^N)^{\frac1{N-1}} u_\ep^{\frac N{N-1}})dx}\\
&=\lim_{\ep\to 0}\al_\ep \frac{\int_{B_{Rr_\ep}} e^{\beta_\ep(1+ \alpha\|u_\ep\|_N^N)^{\frac1{N-1}} u_\ep^{\frac N{N-1}}} dx}{\int_{B_{Rr_\ep}}  \Phi_N'(\beta_\ep (1+\al \|u_\ep\|_N^N)^{\frac1{N-1}} u_\ep^{\frac N{N-1}}) dx (1+ o_\ep(R))}\\
&=\lim_{\ep\to 0}\al_\ep \frac{\int_{B_{Rr_\ep}} e^{\beta_\ep(1+ \alpha\|u_\ep\|_N^N)^{\frac1{N-1}} u_\ep^{\frac N{N-1}}} dx}{\int_{B_{Rr_\ep}} e^{\beta_\ep (1+\al \|u_\ep\|_N^N)^{\frac1{N-1}} u_\ep^{\frac N{N-1}}} dx (1+ o_\ep(R))}\\
&=1,
\end{align*}
here we use \eqref{eq:vocungnho1} and the estimate $u_\ep =c_\ep(1+ o_\ep(R))$ uniformly on $B_{Rr_\ep}$. Let $R\to \infty$, we get
\begin{equation}\label{eq:tichphanhuuhan}
\int_{\R^N} e^{\frac N{N-1} \beta_N \vphi} dx \leq 1.
\end{equation}
It is obvious that $\vphi(x) \leq \vphi(0) =0$. Repeating the argument in \cite{Yang06*} or using a recent classification result for the quasi-linear Liouville equation of Esposito \cite{Esposito} , we conclude that
\begin{equation}\label{eq:vphian}
\vphi(x) = -\frac{N-1}{\beta_N} \ln \lt(1 + \lt(\frac{\om_{N-1}}{N}\rt)^{\frac1{N-1}} |x|^{\frac N{N-1}}\rt),
\end{equation}
and
\begin{equation}\label{eq:tichphan1}
\int_{\R^N} e^{\frac N{N-1} \beta_N \vphi} dx =1.
\end{equation}
Remark that when $N=2$, the representation of $\vphi$ is given by the classification result of Chen and Li \cite{ChenLi}. Thus, we have proved

\begin{lemma}\label{limvphiep}
It holds $\vphi_\ep \to \vphi$ in $C^1_{\rm loc} (\R^N)$, with $\vphi$ is given by \eqref{eq:vphian}.
\end{lemma}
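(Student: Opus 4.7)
The plan is to carry out a standard blow-up profile analysis in three steps: (i) obtain $C^1_{\rm loc}$ compactness of $\vphi_\ep$ from the equation \eqref{eq:vphiepequation}; (ii) pass to the limit and identify the resulting $N$-Laplace Liouville equation together with a global integrability bound; (iii) invoke a classification theorem to pin down the explicit profile.

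For compactness, I would show that on every ball $B_R$ the right-hand side of \eqref{eq:vphiepequation} is uniformly bounded as $\ep\to 0$. The exponential term is controlled because $\psi_\ep\le 1$ and $\psi_\ep\to 1$ in $C^1_{\rm loc}$ by Lemma \ref{limpsiep}; the correction $(\gamma_\ep-1)r_\ep^N c_\ep u_\ep^{N-1}$ is uniformly small by Lemma \ref{eq:reptoinfinite} together with $u_\ep\le c_\ep$; and the remaining finite sum is dominated by $e^{-\beta_\ep(1+\al\|u_\ep\|_N^N)^{\frac1{N-1}} c_\ep^{\frac N{N-1}}}$ times a polynomial in $c_\ep$, which vanishes by \eqref{eq:conclude}. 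Applying the Serrin--Tolksdorf $C^{1,\gamma}$ interior estimates for the $N$-Laplacian, combined with the normalization $\vphi_\ep(0)=c_\ep^{\frac1{N-1}}(u_\ep(0)-c_\ep)=0$ and the pointwise bound $\vphi_\ep\le 0$ (from $u_\ep(r_\ep x)\le u_\ep(0)=c_\ep$), I extract a subsequential limit $\vphi\in C^1(\R^N)$ in $C^1_{\rm loc}$ with $\vphi(0)=\max_{\R^N}\vphi=0$.

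For the limit equation, the uniform expansion $c_\ep^{\frac N{N-1}}(\psi_\ep^{\frac N{N-1}}-1)\to \frac{N}{N-1}\vphi$ on compacts, already derived in \eqref{eq:uniformvphi}, together with $\|u_\ep\|_N^N\to 0$ from Lemma \ref{azero} and $\beta_\ep\to\beta_N$, let me pass to the limit in \eqref{eq:vphiepequation} in the distributional sense and obtain $-\De_N\vphi = e^{-\frac N{N-1}\beta_N\vphi}$ on $\R^N$. The global integrability $\int_{\R^N} e^{-\frac N{N-1}\beta_N\vphi}\,dx\le 1$ then follows by performing the change of variables $y=r_\ep x$ inside the definition of $r_\ep$, rewriting $\int_{B_R} e^{-\frac N{N-1}\beta_N\vphi}\,dx$ as a limit of integrals of $u_\ep$ on $B_{Rr_\ep}$, using the pointwise estimate $u_\ep=c_\ep(1+o_\ep(R))$ on $B_{Rr_\ep}$ together with Lemma \ref{awayzero} and \eqref{eq:vocungnho1}, and finally sending $R\to\infty$.

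In the last step, because $\vphi\in C^1(\R^N)$ is an entire solution of the quasi-linear Liouville equation $-\De_N\vphi = e^{-\frac N{N-1}\beta_N\vphi}$, is bounded above by $0$, attains its maximum at the origin, and has finite exponential mass, the classification of Esposito (with Chen--Li handling $N=2$) forces $\vphi$ to coincide with the radial logarithmic profile \eqref{eq:vphian}, the constants being fixed by $\vphi(0)=0$ and by equality in the mass bound. Uniqueness of the limiting profile upgrades subsequential convergence to convergence of the entire family $\vphi_\ep$. I expect the main delicate point to be the application of the Liouville classification in the degenerate quasi-linear setting, in particular checking that $\vphi$ meets all the hypotheses (regularity, supremum attained at $0$, finite exponential mass) required by Esposito's theorem; the compactness and passage-to-the-limit steps are essentially automatic given the preparatory lemmas.
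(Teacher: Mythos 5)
Your proposal is correct and follows essentially the same route as the paper: $C^1_{\rm loc}$ compactness from the standard elliptic estimates applied to \eqref{eq:vphiepequation} (the lower-order terms being killed by \eqref{eq:conclude} and the bound $u_\ep\le c_\ep$), identification of the limiting Liouville equation via \eqref{eq:uniformvphi}, the mass bound via the change of variables on $B_{Rr_\ep}$ using $u_\ep=c_\ep(1+o_\ep(R))$, \eqref{eq:vocungnho1} and Lemma \ref{awayzero}, and then the Esposito (resp.\ Chen--Li for $N=2$) classification with $\vphi\le\vphi(0)=0$. One small correction: the exponent in the limit equation and in the mass bound must carry the sign $+\frac{N}{N-1}\beta_N\vphi$, as in \eqref{eq:tichphanhuuhan}--\eqref{eq:tichphan1}; since $\vphi\le 0$, the version $e^{-\frac{N}{N-1}\beta_N\vphi}$ you wrote (propagating a sign typo in the displayed equation of the paper) could not have integral at most one.
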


We next consider the asymptotic behavior of $u_\ep$ away from zero. For $c >1$, let us denote $u_{\ep,c} = \min\{u_\ep, c_\ep/c\}$. We have the following result.
\begin{lemma}\label{truncationuep}
It holds $\lim_{\ep \to 0} \int_{\R^N} |\na u_{\ep,c}|^N dx  = \frac 1c$ for any $c >1$.
\end{lemma}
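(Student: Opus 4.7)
The plan is to test the Euler--Lagrange equation \eqref{eq:EL} against $u_{\ep,c}$ itself. Since $\na u_{\ep,c}$ equals $\na u_\ep$ on $\{u_\ep < c_\ep/c\}$ and vanishes on $\{u_\ep \ge c_\ep/c\}$, one has $|\na u_\ep|^{N-2} \na u_\ep \cdot \na u_{\ep,c} = |\na u_{\ep,c}|^N$ almost everywhere. Multiplying \eqref{eq:EL} by $u_{\ep,c}$ and integrating by parts therefore yields
\[
\int_{\R^N} |\na u_{\ep,c}|^N\, dx + (1-\gamma_\ep) \int_{\R^N} u_\ep^{N-1} u_{\ep,c}\, dx = \frac{\al_\ep}{\lam_\ep} \int_{\R^N} u_\ep^{\frac{1}{N-1}} u_{\ep,c}\, \Phi_N'\!\lt(\beta_\ep (1+\al\|u_\ep\|_N^N)^{\frac{1}{N-1}} u_\ep^{\frac{N}{N-1}}\rt) dx.
\]
Because $u_{\ep,c} \le u_\ep$, the second term on the left is bounded by $\|u_\ep\|_N^N \to 0$ (Lemma \ref{azero}), so it will suffice to show that the right-hand side converges to $1/c$.

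I would split that integral at the blow-up scale $R r_\ep$ for a large fixed $R$. On the inner ball $B_{R r_\ep}$, Lemma \ref{limpsiep} gives $u_\ep = c_\ep (1+o_\ep(R))$ uniformly, so for $\ep$ small $u_\ep > c_\ep/c$ throughout $B_{R r_\ep}$ (here we use $c>1$), which forces $u_{\ep,c} \equiv c_\ep/c$ on this ball and
\[
u_\ep^{\frac{1}{N-1}} u_{\ep,c} = \frac{c_\ep}{c\, u_\ep}\, u_\ep^{\frac{N}{N-1}} = \frac{1 + o_\ep(R)}{c}\, u_\ep^{\frac{N}{N-1}}
\]
uniformly on $B_{R r_\ep}$. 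The concentration identity already obtained inside the proof of Lemma \ref{limvphiep}, namely
\[
\lim_{R\to\infty}\lim_{\ep\to 0} \frac{\al_\ep c_\ep^{\frac{N}{N-1}}}{\lam_\ep} \int_{B_{R r_\ep}} e^{\beta_\ep(1+\al\|u_\ep\|_N^N)^{\frac{1}{N-1}} u_\ep^{\frac{N}{N-1}}}\, dx = \int_{\R^N} e^{-\frac{N}{N-1}\beta_N\vphi}\,dx = 1,
\]
combined with $\al_\ep \to 1$ and the fact that the polynomial corrections distinguishing $\Phi_N'$ from its leading exponential contribute $o(1)$ after normalisation (exactly as in \eqref{eq:vocungnho1}), would give
\[
\lim_{R\to\infty}\lim_{\ep\to 0} \frac{\al_\ep}{\lam_\ep} \int_{B_{R r_\ep}} u_\ep^{\frac{N}{N-1}} \Phi_N'(\cdots)\, dx = 1,
\]
so the inner piece contributes exactly $1/c$ in the limit. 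For the outer piece, the pointwise bound $u_{\ep,c}\le u_\ep$ gives
\[
\int_{B_{R r_\ep}^c} u_\ep^{\frac{1}{N-1}} u_{\ep,c}\, \Phi_N'(\cdots)\, dx \le \lam_\ep - \int_{B_{R r_\ep}} u_\ep^{\frac{N}{N-1}} \Phi_N'(\cdots)\, dx,
\]
which, divided by $\lam_\ep$, is $o_\ep(R)$ by the same concentration statement together with Lemma \ref{awayzero}.

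The main obstacle is the concentration claim itself, i.e.\ the assertion that the mass of $\lam_\ep$ is, asymptotically, entirely captured by $B_{R r_\ep}$. I would establish it by replaying the argument in Lemma \ref{limvphiep}: replace $\Phi_N'$ on the blow-up scale by its leading exponential factor, pull $c_\ep^{N/(N-1)}$ out of the integral using $u_\ep = c_\ep(1+o_\ep(R))$, and then invoke $\vphi_\ep\to\vphi$ together with \eqref{eq:tichphan1}. The lower-order polynomial terms of $\Phi_N'$ are of order $u_\ep^N$ and hence contribute $o(1)$ after division by $\lam_\ep$, which is bounded below by Lemma \ref{awayzero}.
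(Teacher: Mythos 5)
Your proposal is correct, but it takes a different route from the paper after the common first step (testing \eqref{eq:EL} against $u_{\ep,c}$, which both you and the paper do, and discarding the $L^N$ terms via Lemma \ref{azero}). The paper never proves the statement you call the ``concentration claim'' at this stage: it only keeps the inner ball $B_{Rr_\ep}$, where $u_{\ep,c}=c_\ep/c$ and the blow-up limits \eqref{eq:uniformvphi}, \eqref{eq:vocungnho1}, \eqref{eq:tichphan1} give the \emph{lower} bound \eqref{eq:abba1}, $\liminf_\ep\int|\na u_{\ep,c}|^N dx\ge 1/c$; it then runs the identical argument for the complementary truncation $(u_\ep-c_\ep/c)_+$ to get \eqref{eq:abba2}, and squeezes using the exact splitting of the Dirichlet energy \eqref{eq:abba3}, so no upper bound on the outer region is ever needed. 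You instead prove the matching upper bound directly, by showing that in the double limit $\lim_{R\to\infty}\lim_{\ep\to0}$ the whole of $\lam_\ep$ is captured by $B_{Rr_\ep}$; this is legitimate and non-circular, since your proof of it uses only Lemma \ref{limpsiep}, \eqref{eq:uniformvphi}, \eqref{eq:vocungnho1}, \eqref{eq:tichphan1} and Lemma \ref{awayzero}, all established before Lemma \ref{truncationuep} (indeed it is essentially the claim \eqref{eq:claim*} that the paper proves later, inside Lemma \ref{deltafunction}, by exactly the computation you sketch). What each approach buys: the paper's squeeze is shorter and avoids controlling the outer integral at all; yours yields the stronger concentration statement as a by-product and handles $u_{\ep,c}$ in one shot without introducing the second truncation. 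One small wording point: your outer estimate is not $o_\ep(R)$ alone but $o_\ep(R)+o_R(1)$, since for fixed $R$ it only tends to $1-\int_{B_R}e^{\frac{N}{N-1}\beta_N\vphi}\,dx$ as $\ep\to0$; your double-limit formulation of the concentration claim already accounts for this, so it is an imprecision of phrasing rather than a gap.
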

\begin{proof}
Using \eqref{eq:EL} and the fact $\|u_\ep\|_N^N =o_\ep(1)$, we have
\begin{align*}
\int_{\R^N} |\na u_{\ep,c}|^N dx & =\int_{\R^N}|\na u_\ep|^{N-2} \na u_\ep \cdot \na u_{\ep,c} dx\\
&=-\int_{\R^N} \Delta_N u_\ep \, u_{\ep,c} dx\\
&=o_\ep(1) + \frac{\al_\ep}{\lam_\ep} \int_{\R^N} u_\ep^{\frac1{N-1}} \Phi_N'(\beta_\ep(1+\al \|u_\ep\|_N^N)^{\frac1{N-1}} u_\ep^{\frac N{N-1}}) u_{\ep,c} dx.
\end{align*}
Fix $R >0$, since $u_\ep = c_\ep(1+ o_\ep(R))$ uniformly on $B_{Rr_\ep}$, then for $\ep>0$ small enough we have $B_{Rr_\ep} \subset \{u_\ep \geq c_\ep/c\}$. Hence
\begin{align*}
\int_{\R^N} |\na u_{\ep,c}|^N dx &\geq o_\ep(1) + \frac{\al_\ep}{\lam_\ep}\frac{c_\ep^{\frac{N}{N-1}}}c (1+o_\ep(R)) \int_{B_{Rr_\ep}} \Phi_N'(\beta_\ep(1+\al \|u_\ep\|_N^N)^{\frac1{N-1}} u_\ep^{\frac N{N-1}}) dx \\
&= o_\ep(1) + \frac{\al_\ep}{\lam_\ep}\frac{c_\ep^{\frac{N}{N-1}}}c (1+o_\ep(R)) \int_{B_{Rr_\ep}} e^{\beta_\ep(1+\al \|u_\ep\|_N^N)^{\frac1{N-1}} u_\ep^{\frac N{N-1}}} dx, 
\end{align*}
here we use \eqref{eq:vocungnho1}. Making the change of variable $x =r_\ep y$, and using \eqref{eq:uniformvphi}, we get
\[
\int_{\R^N} |\na u_{\ep,c}|^N dx \geq o_\ep(1) + \frac1c (1+o_\ep(R)) \int_{B_R} e^{(1+o_\ep(R))\frac N{N-1} \beta_N \vphi } dy.
\]
Let $\ep \to 0$, then let $R\to \infty$ and using \eqref{eq:tichphan1} we obtain
\begin{equation}\label{eq:abba1}
\liminf_{\ep\to 0} \int_{\R^N} |\na u_{\ep,c}|^N dx \geq \frac1c.
\end{equation}
Similarly, we have
\begin{equation}\label{eq:abba2}
\liminf_{\ep\to 0} \int_{\R^N} \lt|\na \lt(u_{\ep} -\frac{c_\ep}c\rt)_+\rt|^N dx \geq 1 -\frac1c.
\end{equation}
Since $\|u_\ep\|_N =o_\ep(1)$, then
\begin{equation}\label{eq:abba3}
\int_{\R^N} |\na u_{\ep,c}|^N dx + \int_{\R^N} \lt|\na \lt(u_{\ep} -\frac{c_\ep}c\rt)_+\rt|^N dx = \|\na u_\ep\|_N^N = 1 + o_\ep(1).
\end{equation}
Combining \eqref{eq:abba1}, \eqref{eq:abba2} and \eqref{eq:abba3}, we conclude the lemma.
\end{proof}

\begin{lemma}\label{uppersup}
We have
\begin{equation}\label{eq:uppersup}
MT(N,\beta_N,\al)= \lim_{\ep \to 0} \frac{\lam_\ep}{c_\ep^{\frac N{N-1}}}.
\end{equation}
As consequently, for any $\theta < N/(N-1)$, we have $\lam_\ep/c_\ep^{\theta} \to \infty$ as $\ep \to 0$. Furthermore, $c_\ep^{\frac N{N-1}}/\lam_\ep$ is bounded.
\end{lemma}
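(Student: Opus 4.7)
I would prove the equality by matching asymptotic upper and lower bounds; the two consequences then follow immediately because $MT(N,\beta_N,\al)>0$ by Proposition \ref{lowerboundsub}. Write $\tau_\ep = \beta_\ep(1+\al\|u_\ep\|_N^N)^{1/(N-1)}$ and note that $\|u_\ep\|_N^N\to 0$ by Lemma \ref{azero} while $MT(N,\beta_\ep,\al)\to MT(N,\beta_N,\al)$ by \eqref{eq:cont}.

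The upper bound rests on the algebraic identity $\Phi_N'(t)=\Phi_N(t)+t^{N-2}/(N-2)!$. Multiplying by $u_\ep^{N/(N-1)}$, integrating, and using $u_\ep^{N/(N-1)}\cdot u_\ep^{N(N-2)/(N-1)}=u_\ep^N$, we obtain
\[
\lam_\ep \,=\, \int_{\R^N} u_\ep^{\frac{N}{N-1}}\,\Phi_N(\tau_\ep u_\ep^{\frac{N}{N-1}})\,dx \,+\, \frac{\tau_\ep^{N-2}}{(N-2)!}\,\|u_\ep\|_N^N.
\]
Bounding $u_\ep\leq c_\ep$ in the first term gives a contribution at most $c_\ep^{N/(N-1)}MT(N,\beta_\ep,\al)$, while the second term is $o(1)$. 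Dividing by $c_\ep^{N/(N-1)}$ yields $\limsup_\ep \lam_\ep/c_\ep^{N/(N-1)} \leq MT(N,\beta_N,\al)$.

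For the lower bound, Lemma \ref{limpsiep} provides $u_\ep\geq c_\ep(1-o_\ep(R))$ on $B_{Rr_\ep}$; combined with $\Phi_N'\geq \Phi_N$ this gives
\[
\lam_\ep \,\geq\, c_\ep^{\frac{N}{N-1}}(1-o_\ep(R))^{\frac{N}{N-1}} \int_{B_{Rr_\ep}} \Phi_N(\tau_\ep u_\ep^{\frac{N}{N-1}})\,dx,
\]
so it suffices to show $\lim_{R\to\infty}\lim_{\ep\to 0} \int_{B_{Rr_\ep}^c}\Phi_N(\tau_\ep u_\ep^{N/(N-1)})\,dx = 0$. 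Decompose $B_{Rr_\ep}^c = (B_{r_0}\setminus B_{Rr_\ep})\cup B_{r_0}^c$ for fixed small $r_0>0$. On $B_{r_0}^c$, radial monotonicity plus $u_\ep\to 0$ in $L^p_{\rm loc}(\R^N)$ force $u_\ep(r_0)\to 0$, hence $\Phi_N(\tau_\ep u_\ep^{N/(N-1)}) \leq C u_\ep^N$ there and the integral is $\leq C\|u_\ep\|_N^N\to 0$. On the annulus one truncates: for $c>1$, Lemma \ref{truncationuep} yields $\|u_{\ep,c}\|_{W^{1,N}(\R^N)}^{N/(N-1)}\to c^{-1/(N-1)}$, so the effective exponent $\tau_\ep \|u_{\ep,c}\|_{W^{1,N}(\R^N)}^{N/(N-1)}\to \beta_N c^{-1/(N-1)}<\beta_N$ is strictly subcritical; the Moser--Trudinger inequality \eqref{eq:MTfull} combined with Vitali convergence (using $u_{\ep,c}\to 0$ a.e.) gives $\int_{\R^N}\Phi_N(\tau_\ep u_{\ep,c}^{N/(N-1)})\,dx\to 0$, while the contribution from $\{u_\ep>c_\ep/c\}\cap B_{Rr_\ep}^c$ is controlled by the integrable tail of the rescaled blow-up profile via \eqref{eq:vphian} and \eqref{eq:tichphan1}.

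Setting $L := MT(N,\beta_N,\al)\in (0,\infty)$, the matching bounds give $\lam_\ep/c_\ep^{N/(N-1)}\to L$; therefore $c_\ep^{N/(N-1)}/\lam_\ep \to 1/L$ is bounded and, for any $\theta<N/(N-1)$, the factorisation $\lam_\ep/c_\ep^\theta = (\lam_\ep/c_\ep^{N/(N-1)})\,c_\ep^{N/(N-1)-\theta}$ together with $c_\ep\to\infty$ forces $\lam_\ep/c_\ep^\theta\to\infty$. The principal obstacle is the annular estimate: the region where $u_\ep$ is comparable to $c_\ep$ yet lies outside the blow-up ball must be handled by merging the truncation-based subcritical Moser--Trudinger bound with the concentration of the rescaled profile $\vphi$.
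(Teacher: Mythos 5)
Your $\limsup$ direction is exactly the paper's argument (the identity $\Phi_N'(t)=\Phi_N(t)+t^{N-2}/(N-2)!$, the bound $u_\ep\le c_\ep$, and $\|u_\ep\|_N^N=o_\ep(1)$), and your treatment of the low set $\{u_\ep\le c_\ep/c\}$ via Lemma \ref{truncationuep} plus the subcritical Moser--Trudinger bound also matches the paper. The genuine gap is in your lower bound, on the neck region $\{u_\ep>c_\ep/c\}\cap B_{Rr_\ep}^c$. You claim it is ``controlled by the integrable tail of the rescaled blow-up profile via \eqref{eq:vphian} and \eqref{eq:tichphan1}'', but Lemma \ref{limvphiep} gives $\vphi_\ep\to\vphi$ only in $C^1_{\rm loc}(\R^N)$, i.e. uniformly on each fixed ball $B_R$ in the rescaled variable; it provides no pointwise or integral comparison between $u_\ep$ and $\vphi$ on $B_{Rr_\ep}^c$, so the finiteness of $\int_{|y|>R}e^{\frac N{N-1}\beta_N\vphi}\,dy$ bounds nothing there. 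The statement you reduce to, namely $\lim_{R\to\infty}\lim_{\ep\to0}\int_{B_{Rr_\ep}^c}\Phi_N\bigl(\beta_\ep(1+\al\|u_\ep\|_N^N)^{\frac1{N-1}}u_\ep^{\frac N{N-1}}\bigr)dx=0$, is true only a posteriori (it follows from the lemma itself together with \eqref{eq:claim*}-type computations); as written your proposal assumes the hard part rather than proving it.

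The paper sidesteps the neck entirely: on the whole high set $\{u_\ep>c_\ep/c\}$ it inserts the trivial pointwise factor $1\le (c\,u_\ep/c_\ep)^{\frac N{N-1}}$ and uses $\Phi_N\le\Phi_N'$, so that this part of $MT(N,\beta_\ep,\al)$ is bounded by $c^{\frac N{N-1}}\lam_\ep/c_\ep^{\frac N{N-1}}$; letting $\ep\to0$ and then $c\downarrow1$ yields $MT(N,\beta_N,\al)\le\liminf_{\ep\to0}\lam_\ep/c_\ep^{\frac N{N-1}}$ with no blow-up localization at all. If you want to keep your route, the neck can be repaired, but not by the profile tail: bound the neck integral by $\frac{c^{N/(N-1)}}{c_\ep^{N/(N-1)}}\bigl(\lam_\ep-\int_{B_{Rr_\ep}}u_\ep^{\frac N{N-1}}\Phi_N'\,dx\bigr)$, then show, using the definition of $r_\ep$, Lemma \ref{limpsiep}, \eqref{eq:uniformvphi} and \eqref{eq:tichphan1}, that $\lam_\ep^{-1}\int_{B_{Rr_\ep}}u_\ep^{\frac N{N-1}}\Phi_N'\,dx\to\int_{B_R}e^{\frac N{N-1}\beta_N\vphi}\,dy\to1$, and finally invoke your already-proved $\limsup$ bound to see the neck contribution is $o_\ep(R)+o_R(1)$; this is exactly the bookkeeping the paper carries out later in Lemma \ref{deltafunction}. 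Your derivation of the two consequences from $0<MT(N,\beta_N,\al)<\infty$ is fine.
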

\begin{proof}
For any $c >1$, denote $u_\ep =\min\{u_\ep,c_\ep/c\}$. We have
\begin{align*}
MT(N,\beta_\ep,\al) & = \int_{\{u_\ep \leq c_\ep/c\}}\Phi_N(\beta_\ep(1+\al \|u_\ep\|_N^N) |u_\ep|^{\frac N{N-1}}) dx\\
&\quad  + \int_{\{u_\ep > c_\ep/c\}}\Phi_N(\beta_\ep (1+\al \|u_\ep\|_N^N) |u_\ep|^{\frac N{N-1}}) dx\\
&\leq \int_{\R^N}\Phi_N(\beta_\ep(1+\al \|u_\ep\|_N^N) |u_{\ep,c}|^{\frac N{N-1}}) dx  \\
&\quad + \frac{c^{\frac{N}{N-1}}}{c_\ep^{\frac N{N-1}}} \int_{\R^N} u_\ep^{\frac{N}{N-1}} \Phi_N(\beta_\ep (1+\al \|u_\ep\|_N^N) |u_\ep|^{\frac N{N-1}})dx \\
&\leq \int_{\R^N}\Phi_N(\beta_\ep(1+\al \|u_\ep\|_N^N) |u_{\ep,c}|^{\frac N{N-1}}) dx + c^{\frac N{N-1}} \frac{\lam_\ep}{c_\ep^{\frac N{N-1}}},
\end{align*}
here we use the inequality $\Phi_N(t) \leq \Phi_N'(t)$ for $t \geq 0$. By Lemma \ref{truncationuep}, we have
\begin{equation}\label{eq:ooo}
\lim_{\ep\to 0} (\|\na u_{\ep,c}\|_N^N + \|u_{\ep,c}\|_N^N) = \frac 1{c} < 1.
\end{equation}
Repeating the proof of \eqref{eq:trunlimit1} and \eqref{eq:trunlimit2} with the help of \eqref{eq:ooo}, the classical Moser--Trudinger inequality \eqref{eq:usualMT}, and the fact $\|u_\ep\|_N^N =o_\ep(1)$, we conclude that
\[
\lim_{\ep\to 0} \int_{\R^N}\Phi_N(\beta_\ep(1+\al \|u_\ep\|_N^N) |u_{\ep,c}|^{\frac N{N-1}}) dx = 0.
\]
Letting $\ep \to 0$ and then $c \downarrow 1$, we obtain
\begin{equation}\label{eq:a1}
MT(N,\beta_N,\al) = \liminf_{\ep\to 0} MT(N,\beta_\ep,N) \leq \liminf_{\ep\to 0} \frac{\lam_\ep}{c_\ep^{\frac N{N-1}}}.
\end{equation}
In the other hand, we have $\Phi_N'(t) =\Phi_N(t) + t^{N-2}/(N-2)!$, hence
\begin{align*}
\lam_\ep& =\int_{\R^N} u_\ep^{\frac N{N-1}}\Phi_N(\beta_\ep(1+\al \|u_\ep\|_N^N) |u_{\ep}|^{\frac N{N-1}}) dx + \frac{\beta_\ep^{N-2} (1+ \al\|u_\ep\|_N^N)^{\frac{N-2}{N-1}}}{(N-2)!} \|u_\ep\|_N^N\\
&\leq c_\ep^{\frac{N}{N-1}} MT(N,\beta_\ep,\al) +  \frac{\beta_\ep^{N-2} (1+ \al\|u_\ep\|_N^N)^{\frac{N-2}{N-1}}}{(N-2)!} \|u_\ep\|_N^N,
\end{align*}
which then implies
\begin{equation}\label{eq:a2}
\limsup_{\ep\to 0} \frac{\lam_\ep}{c_\ep^{\frac N{N-1}}} \leq MT(N,\beta_N,\al).
\end{equation}
Combining \eqref{eq:a1} and \eqref{eq:a2}, we get \eqref{eq:uppersup}.

The rest of this lemma is immediate consequence of \eqref{eq:uppersup}. Indeed, if otherwise, we then have $MT(N,\beta_N,\al) =0$ which is impossible.
\end{proof}

\begin{lemma}\label{deltafunction}
For any $\phi \in C_0^0(\R^N)$, we have
\begin{equation}\label{eq:delta0}
\lim_{\ep \to 0} \int_{\R^N} \frac{c_\ep \al_\ep}{\lam_\ep} u_\ep^{\frac1{N-1}} \Phi_N'(\beta_\ep (1+ \al\|u_\ep\|_N^N)^{\frac1{N-1}} u_\ep^{\frac N{N-1}}) \phi dx = \phi(0).
\end{equation}
\end{lemma}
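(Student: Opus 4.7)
The plan is to show that the nonnegative measures
\[
d\mu_\ep := \frac{c_\ep\al_\ep}{\lam_\ep} u_\ep^{\frac1{N-1}}\Phi_N'\lt(\beta_\ep(1+\al\|u_\ep\|_N^N)^{\frac1{N-1}}u_\ep^{\frac N{N-1}}\rt)\,dx
\]
converge weak-$*$ to the Dirac mass $\de_0$. Fix $\phi\in C_0^0(\R^N)$ with $\mathrm{supp}\,\phi \subset B_M$, abbreviate $A_\ep := \beta_\ep(1+\al\|u_\ep\|_N^N)^{\frac1{N-1}}$, and split the integral at the blow-up scale,
\[
\int_{\R^N} \phi\,d\mu_\ep = I_\ep^{\mathrm{in}} + I_\ep^{\mathrm{out}},
\]
where $I_\ep^{\mathrm{in}}$ is the contribution of $B_{Rr_\ep}$ and $I_\ep^{\mathrm{out}}$ that of $B_M\setminus B_{Rr_\ep}$, with $R\ge 1$ to be sent to $\infty$ after $\ep\to 0$.

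For $I_\ep^{\mathrm{in}}$ I would rescale by $x = r_\ep y$. Since $u_\ep(r_\ep y) = c_\ep \psi_\ep(y)$ and the definition of $r_\ep^N$ forces the prefactor $c_\ep^{N/(N-1)}\al_\ep r_\ep^N/\lam_\ep$ to equal $e^{-A_\ep c_\ep^{N/(N-1)}}$ exactly, the rescaled integrand becomes
\[
\psi_\ep^{\frac1{N-1}}(y)\,\frac{\Phi_N'\lt(A_\ep c_\ep^{\frac N{N-1}}\psi_\ep^{\frac N{N-1}}\rt)}{e^{A_\ep c_\ep^{\frac N{N-1}}}}\,\phi(r_\ep y).
\]
On $B_R$, Lemma \ref{limpsiep} gives $\psi_\ep\to 1$ uniformly, identity \eqref{eq:uniformvphi} gives $A_\ep c_\ep^{\frac N{N-1}}(\psi_\ep^{\frac N{N-1}}-1)\to \tfrac{N}{N-1}\beta_N \vphi(y)$ uniformly, the elementary fact $\Phi_N'(t)/e^t\to 1$ as $t\to\infty$ absorbs the polynomial correction (the argument blows up because $c_\ep\to\infty$), and $\phi(r_\ep y)\to\phi(0)$ by Lemma \ref{eq:reptoinfinite}. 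Uniform convergence on $B_R$ then yields
\[
\lim_{\ep\to 0} I_\ep^{\mathrm{in}} = \phi(0)\int_{B_R} e^{\frac N{N-1}\beta_N\vphi}\,dy,
\]
and \eqref{eq:tichphan1} gives $\lim_{R\to\infty}\lim_{\ep\to 0} I_\ep^{\mathrm{in}} = \phi(0)$.

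For $I_\ep^{\mathrm{out}}$ I would exploit the radial monotonicity of $u_\ep$: on $B_M\setminus B_{Rr_\ep}$ we have $u_\ep(x)\le u_\ep(Rr_\ep)=c_\ep\psi_\ep(R)$, and since $\Phi_N'$ is increasing, every pointwise factor of the integrand is pinned by its value at $r=Rr_\ep$. Using the identity $\frac{c_\ep^{N/(N-1)}}{\lam_\ep}e^{A_\ep c_\ep^{N/(N-1)}}=\frac{1}{\al_\ep r_\ep^N}$ (a direct rewriting of the definition of $r_\ep$) to trade the $e^{A_\ep c_\ep^{N/(N-1)}}$-part of $\Phi_N'$ against $1/r_\ep^N$, the annular integral is bounded by
\[
C\,\|\phi\|_\infty\, M^N\,\psi_\ep(R)^{\frac1{N-1}}\,e^{A_\ep c_\ep^{\frac N{N-1}}(\psi_\ep(R)^{\frac N{N-1}}-1)}.
\]
As $\ep\to 0$ with $R$ fixed, the exponent tends to $\tfrac{N}{N-1}\beta_N\vphi(R)$ by \eqref{eq:uniformvphi}, and the explicit form \eqref{eq:vphian} shows that this behaves like $-\tfrac{N^2}{N-1}\ln R$. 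Therefore $\limsup_{\ep\to 0}|I_\ep^{\mathrm{out}}|\le C M^N R^{-N^2/(N-1)}(1+o_R(1))$, which vanishes as $R\to\infty$. Combining the two estimates gives \eqref{eq:delta0}.

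The main obstacle is the outer step: the prefactor $c_\ep u_\ep^{\frac1{N-1}}\Phi_N'(\cdots)/\lam_\ep$ carries a super-exponential growth $e^{A_\ep c_\ep^{N/(N-1)}}$ that must be exactly absorbed by the small volume factor $r_\ep^N$ through the definition of $r_\ep$, so that what survives is only the polynomial-in-$R$ decay supplied by $\vphi(R)\to -\infty$; keeping the factors of $c_\ep$, $r_\ep$, $\lam_\ep$ and the $\Phi_N'(t)/e^t$ correction aligned uniformly across the annulus (where $\psi_\ep$ is no longer close to $1$) is where the proof needs the most care.
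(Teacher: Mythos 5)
Your treatment of the inner ball $B_{Rr_\ep}$ is sound and is essentially the paper's own computation (the claim \eqref{eq:claim*}): rescaling, $\psi_\ep\to1$, \eqref{eq:uniformvphi} and \eqref{eq:tichphan1} give mass $\to 1$ near the origin, hence the factor $\phi(0)$. The outer estimate, however, has a genuine gap. From the monotone pointwise bound $u_\ep\le c_\ep\psi_\ep(R)$ on $B_M\setminus B_{Rr_\ep}$ and the identity $\al_\ep c_\ep^{N/(N-1)}/\lam_\ep=r_\ep^{-N}e^{-A_\ep c_\ep^{N/(N-1)}}$, what you actually obtain for the integrand is the bound
\begin{equation*}
\frac{1}{r_\ep^{N}}\,\psi_\ep(R)^{\frac1{N-1}}\,e^{A_\ep c_\ep^{\frac N{N-1}}\lt(\psi_\ep(R)^{\frac N{N-1}}-1\rt)},
\end{equation*}
and integrating this over the annulus multiplies it by a volume of order $M^N$, \emph{not} of order $r_\ep^N$; the factor $r_\ep^{-N}$ survives. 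Since by \eqref{eq:uniformvphi} the exponent tends to the finite constant $\frac N{N-1}\beta_N\vphi(R)$ while $r_\ep^{-N}\to\infty$ (Lemma \ref{eq:reptoinfinite} shows $r_\ep\to0$ super-exponentially fast in $c_\ep$), your claimed bound $CM^NR^{-N^2/(N-1)}$ is off by the divergent factor $r_\ep^{-N}$: the quantity you control actually blows up as $\ep\to0$ for fixed $R$. In other words, the sup of the density just outside the blow-up ball is huge ($\sim r_\ep^{-N}$), and a sup-times-volume argument cannot see the decay of $u_\ep$ across the neck region between scale $Rr_\ep$ and scale $1$.

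This neck is precisely where the real work lies, and the paper handles it by a different decomposition. On $\Om_2=\{u_\ep>c_\ep/c\}\setminus B_{Rr_\ep}$ one uses $c_\ep\le c\,u_\ep$ to replace $c_\ep u_\ep^{1/(N-1)}$ by $c\,u_\ep^{N/(N-1)}$ and then invokes the normalization $\int_{\R^N}u_\ep^{N/(N-1)}\Phi_N'(\cdots)\,dx=\lam_\ep$ together with \eqref{eq:claim*}: since the inner ball already exhausts the unit mass of $u_\ep^{N/(N-1)}\Phi_N'(\cdots)/\lam_\ep$, the contribution of $\Om_2$ is $o_\ep(R)+o_R(1)$ without any pointwise estimate in the neck. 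On $\Om_1=\{u_\ep\le c_\ep/c\}$ the comparison $c_\ep\le c\,u_\ep$ fails, and instead one uses the truncation $u_{\ep,c}$, Lemma \ref{truncationuep} (which gives $\|\na u_{\ep,c}\|_N^N\to 1/c<1$ so the classical Moser--Trudinger inequality \eqref{eq:usualMT} controls $\int u_{\ep,c}^{1/(N-1)}\Phi_N'(\cdots)\,dx$ uniformly) together with Lemma \ref{uppersup}, which yields $c_\ep/\lam_\ep\to0$ and hence an $o_\ep(1)$ contribution. To repair your argument you would need to replace the sup-times-volume step on the annulus by some such integral mechanism (the $\lam_\ep$-normalization plus a truncation/capacity-type estimate); as written, the outer bound does not close.
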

\begin{proof}
Fix $c >1$ and $R >0$ , we divide $\R^N$ into three parts as follows
\[
\Om_1 =\{u_\ep \leq c_\ep/ c \},\quad \Om_2 = \{u_\ep > c_\ep/c\} \setminus B_{Rr_\ep},\quad \Om_3 = \{u_\ep > c_\ep/c\} \cap B_{Rr_\ep}.
\]
Note that $\Om_3 = B_{Rr_\ep}$ for $\ep >0$ small enough by Lemma \ref{limpsiep}. Denote 
\[
g_\ep = \frac{c_\ep \al_\ep}{\lam_\ep} u_\ep^{\frac1{N-1}} \Phi_N'(\beta_\ep (1+ \al\|u_\ep\|_N^N)^{\frac1{N-1}} u_\ep^{\frac N{N-1}}) \quad \text{and}\quad M =\max_{x\in \R^N} |\phi(x)|
\]
for simplifying notation. It is easy to see that $\int_{\R^N} u_\ep^{N-1} |\phi| dx = o_\ep(1)$. Thus we have on $\Om_1$ that
\begin{align*}
\lt|\int_{\Om_1} g_\ep \phi dx\rt|& \leq \frac{c_\ep \al_\ep}{\lam_\ep}M \int_{\R^N} u_{\ep,c}^{\frac1{N-1}} \Phi_N(\beta_\ep (1+ \al\|u_\ep\|_N^N)^{\frac1{N-1}} u_{\ep,c}^{\frac N{N-1}}) dx + o_\ep(1).
\end{align*}
Arguing as in the proof of \eqref{eq:trunlimit1} and \eqref{eq:trunlimit2} with the help of \eqref{eq:ooo}, the classical Moser--Trudinger inequality \eqref{eq:usualMT}, the fact $\|u_\ep\|_N^N =o_\ep(1)$ and Lemma \ref{uppersup}, we then get
\begin{equation}\label{eq:b1}
\int_{\Om_1} g_\ep \phi dx = o_\ep(1).
\end{equation}
We next claim that
\begin{equation}\label{eq:claim*}
\lim_{R\to\infty} \lim_{\ep\to 0} \frac{c_\ep}{\lam_\ep} \int_{B_{Rr_\ep}} u_\ep^{\frac1{N-1}} \Phi_N'(\beta_\ep (1+ \al\|u_\ep\|_N^N)^{\frac1{N-1}} u_\ep^{\frac N{N-1}}) dx =1.
\end{equation}
Indeed, by Lemma \ref{limpsiep}, we have $u_\ep =c_\ep(1+ o_\ep(R)$ uniformly on $B_{Rr_\ep}$. This together \eqref{eq:uniformvphi} and \eqref{eq:vocungnho1} gives
\begin{align*}
\frac{c_\ep}{\lam_\ep} \int_{B_{Rr_\ep}}u_\ep^{\frac1{N-1}} &\Phi_N'(\beta_\ep (1+ \al\|u_\ep\|_N^N)^{\frac1{N-1}} u_\ep^{\frac N{N-1}}) dx\\
& = (1+ o_\ep(R))\frac{c_\ep^{\frac N{N-1}}}{\lam_\ep} \int_{B_{Rr_\ep}} e^{\beta_\ep (1+ \al\|u_\ep\|_N^N)^{\frac1{N-1}} u_\ep^{\frac N{N-1}}} dx\\
&=(1+o_\ep(R)) \int_{B_R} e^{\beta_\ep(1+ \al\|u_\ep\|_N^N)^{\frac1{N-1}} c_\ep^{\frac N{N-1}} (\psi_\ep(x)^{\frac N{N-1}} -1)} dx\\
&=  (1+o_\ep(R)) \int_{B_R} e^{(1+o_\ep(R)) \beta_N \frac N{N-1} \vphi} dx.
\end{align*}
Let $\ep \to 0$ and then $R \to \infty$ we get our claim \eqref{eq:claim*}.

On $\Om_2$ we have
\begin{align}\label{eq:b2}
\lt|\int_{\Om_2} g_\ep \phi dx \rt| &\leq \frac{\al_\ep c_\ep}{\lam_\ep} M \int_{\Om_2} u_\ep^{\frac1{N-1}}\Phi_N'(\beta_\ep (1+ \al\|u_\ep\|_N^N)^{\frac1{N-1}} u_\ep^{\frac N{N-1}}) dx\notag\\
&\leq Mc \frac{\al_\ep}{\lam_\ep} \int_{\Om_2} u_\ep^{\frac N{N-1}}\Phi_N'(\beta_\ep (1+ \al\|u_\ep\|_N^N)^{\frac1{N-1}} u_\ep^{\frac N{N-1}}) dx\notag\\
&\leq Mc\al_\ep \lt(1 - \frac{1}{\lam_\ep} \int_{B_{Rr_\ep}} u_\ep^{\frac N{N-1}} \Phi_N'(\beta_\ep (1+ \al\|u_\ep\|_N^N)^{\frac1{N-1}} u_\ep^{\frac N{N-1}}) dx\rt)\notag\\
&= o_\ep(R) + o_R(1)
\end{align}
with $o_R(1) \to 0$ as $R \to \infty$, here we use \eqref{eq:claim*} and the fact $u_\ep =c_\ep(1+ o_\ep(R))$ uniformly on $B_{Rr_\ep}$.

On $\Om_3 =B_{Rr_\ep}$, we have by \eqref{eq:claim*} that
\[
\lt|\int_{\Om_3} g_\ep (\phi -\phi_0)\rt| \leq \sup_{x\in B_{Rr_\ep}}|\phi(x) -\phi(0)| \frac{\al_\ep c_\ep}{\lam_\ep} \int_{B_{Rr_\ep}} u_\ep^{\frac1{N-1}} \Phi_N'(\beta_\ep (1+ \al\|u_\ep\|_N^N)^{\frac1{N-1}} u_\ep^{\frac N{N-1}}) dx
\]
which then implies
\begin{equation}\label{eq:b3}
\int_{\Om_3} g_\ep \phi dx = \phi(0) + o_\ep(R) + o_R(1).
\end{equation}
Combining \eqref{eq:b1}, \eqref{eq:b2} and \eqref{eq:b3}, we obtain \eqref{eq:delta0}.
\end{proof}

\begin{lemma}\label{Greenfunction}
$c_\ep^{\frac1{N-1}} u_\ep \to G_\al$ in $C^1_{\rm loc}(\R^N)$ and weakly in $W_{\rm loc}^{1,q}(\R^N\setminus\{0\})$ for any $1< q < N$, where $G_\al$ is a distributional solution to
\begin{equation}\label{eq:Green}
-\De_N G_\al + G_\al^{N-1} = \de_0 + \al G_\al^{N-1}.
\end{equation}
Moreover $G_\al\in W^{1,N}(\R^N\setminus B_R)$ for any $R >0$ and takes the form 
\begin{equation}\label{eq:Greenform}
G_\al(x) = -\frac{N}{\beta_N} \ln |x| + A_\al + w(x),
\end{equation}
where $A_\al$ is constant, and $w\in C^1(\R^N)$ satisfies $w(x) = O(|x|^N \ln^{N-1}|x|)$ as $x\to 0$.
\end{lemma}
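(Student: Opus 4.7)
The plan is to renormalize by $G_\ep := c_\ep^{\frac1{N-1}} u_\ep$. Using the $(N-1)$-homogeneity of $-\De_N$ and the identity $c_\ep u_\ep^{N-1}=G_\ep^{N-1}$, the Euler--Lagrange equation \eqref{eq:EL} is recast as
\[
-\De_N G_\ep + (1-\gamma_\ep) G_\ep^{N-1} = \frac{c_\ep \al_\ep}{\lam_\ep} u_\ep^{\frac1{N-1}} \Phi_N'\bigl(\beta_\ep(1+\al\|u_\ep\|_N^N)^{\frac1{N-1}} u_\ep^{\frac N{N-1}}\bigr).
\]
By Lemma \ref{deltafunction} the right-hand side converges to $\de_0$ in the sense of measures, and Lemma \ref{azero} gives $\gamma_\ep\to\al$. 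Thus any sufficiently strong limit $G_\al$ of $G_\ep$ must satisfy \eqref{eq:Green}.

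I would first establish uniform $W^{1,q}_{\rm loc}(\R^N)$ bounds on $G_\ep$ for every $1<q<N$ via the Boccardo--Gallou\"et estimate for the $N$-Laplacian with bounded-measure data, using that the RHS above has uniformly bounded total mass (tending to $1$). Passing to a subsequence, $G_\ep\rightharpoonup G_\al$ weakly in $W^{1,q}_{\rm loc}$, strongly in $L^q_{\rm loc}$, and a.e. To pass to the limit in the nonlinear operator $-\De_N G_\ep$, I would apply the Boccardo--Murat technique of testing the difference equation with truncations of $G_\ep-G_\al$, obtaining $\na G_\ep \to \na G_\al$ a.e. and hence \eqref{eq:Green} in the distributional sense.

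For the $C^1_{\rm loc}$ convergence away from the origin, Lemma \ref{radiallemma} bounds $u_\ep$ uniformly on $\{|x|\geq R\}$, while Lemma \ref{uppersup} (together with the boundedness of $c_\ep^{N/(N-1)}/\lam_\ep$) forces $c_\ep/\lam_\ep\to 0$. Hence the right-hand side of the equation for $G_\ep$ tends to zero locally uniformly on $\R^N\setminus\{0\}$. Applying the Tolksdorf--DiBenedetto $C^{1,\al}$ theory for the $N$-Laplacian with bounded source yields uniform $C^{1,\al}$ estimates on compacts of $\R^N\setminus\{0\}$, and Arzel\`a--Ascoli delivers $C^1_{\rm loc}(\R^N\setminus\{0\})$ convergence. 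The integrability $G_\al\in W^{1,N}(\R^N\setminus B_R)$ follows from the coercive contribution of $(1-\al)G_\al^{N-1}$ at infinity (with $\al<1$) through a standard energy estimate.

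Finally, the asymptotic expansion \eqref{eq:Greenform} is obtained by ODE analysis: by radial symmetry (inherited from $u_\ep$) and the fundamental solution of $-\De_N$ on $\R^N$, namely $-\om_{N-1}^{-\frac1{N-1}}\ln|x| = -\frac{N}{\beta_N}\ln|x|$, the leading logarithmic singularity is pinned down. Writing $G_\al(x)=-\frac{N}{\beta_N}\ln|x|+A_\al+w(x)$, the remainder $w$ satisfies a regular radial ODE whose source is controlled by $G_\al^{N-1}=O(|\ln|x||^{N-1})$; two successive integrations in the radial variable and matching of the bounded part give the constant $A_\al$ together with the bound $w(r)=O(r^N|\ln r|^{N-1})$ as $r\to 0$. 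The main obstacle is Step 2, the strong convergence of gradients, which is standard in the theory of renormalized/SOLA solutions to measure-data $N$-Laplace equations but requires delicate truncation estimates; the ODE analysis for $w$ near the origin is also technically involved but straightforward in principle.
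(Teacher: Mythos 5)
Your overall strategy (renormalize $G_\ep=c_\ep^{\frac1{N-1}}u_\ep$, rewrite \eqref{eq:EL}, use Lemma \ref{deltafunction} to identify the limiting measure $\de_0$, kill the right-hand side away from the origin via Lemma \ref{uppersup} and Lemma \ref{radiallemma}, and get $G_\al\in W^{1,N}(\R^N\setminus B_R)$ from the coercive term with $\al<1$) is the same as the paper's. But there is a genuine gap at the very first quantitative step: you invoke the Boccardo--Gallou\"et estimate to get uniform $W^{1,q}_{\rm loc}$ bounds ``using that the RHS has uniformly bounded total mass.'' That estimate is a \emph{global} estimate for the Dirichlet problem; there is no boundary condition here, and it cannot be applied locally from the measure bound alone, because a purely interior bound also requires some a priori local control of the solution itself. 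Concretely, if you localize by setting $\tilde G_\ep=G_\ep-G_\ep(R)\in W^{1,N}_0(B_R)$, the resulting right-hand side is $f_\ep+(\gamma_\ep-1)G_\ep^{N-1}$, and its $L^1(B_R)$ norm is controlled only if $\|G_\ep\|_{N-1,B_R}$ is already known to be bounded --- which is exactly what is missing (note $\|\na G_\ep\|_N^N\sim c_\ep^{\frac N{N-1}}\to\infty$, and the radial lemma only gives $G_\ep^{N-1}\le c_\ep\,C\,r^{-\frac{(N-1)^2}N}$, which blows up). The same missing local bound undermines your $C^1_{\rm loc}(\R^N\setminus\{0\})$ step: the Tolksdorf--DiBenedetto interior $C^{1,\alpha}$ estimates require a uniform local $L^\infty$ bound on $G_\ep$, which you have not established. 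The paper fills precisely this hole with a separate argument: it first proves that $w_\ep=c_\ep^{\frac1{N-1}}u_\ep$ is bounded in $L^{N-1}_{\rm loc}$ by a normalization/contradiction argument --- if $\|w_\ep\|_{N-1,B_R}\to\infty$, the normalized functions converge to a nontrivial $v_0$ solving $-\De_N v_0=(\al-1)v_0^{N-1}$ in $B_R$, and testing with $v_0$ forces $v_0\equiv0$ because $\al<1$, a contradiction --- and only then applies the measure-data $W^{1,q}$ estimate (Yang's lemma) to the localized problem. Some substitute for this step is indispensable in your plan.

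The remaining ingredients of your proposal are consistent with the paper, with two genuine (and acceptable) deviations: you propose Boccardo--Murat truncation arguments to get a.e.\ gradient convergence before passing to the limit in $-\De_N$ (the paper is terse on this point, so this is a reasonable refinement), and you derive the expansion \eqref{eq:Greenform} by a radial ODE analysis near the origin instead of quoting Lemma 3.8 of Li--Ruf; for radial solutions this integration-and-matching argument does yield the $-\frac N{\beta_N}\ln|x|+A_\al+O(|x|^N\ln^{N-1}|x|)$ form, so that part is a legitimate alternative route. The verdict nevertheless is that the compactness step --- the uniform local estimate on $G_\ep$ --- is not justified as written.
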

\begin{proof}
Denote $w_\ep = c_\ep^{\frac1{N-1}} u_\ep$. It follows from \eqref{eq:EL} that $w_\ep$ satisfies
\begin{equation}\label{eq:wep}
-\De_N w_\ep + w_\ep^{N-1} = \frac{\al_\ep c_\ep}{\lam_\ep} u_\ep^{\frac1{N-1}} \Phi_N'(\be_\ep (1+ \al \|u_\ep\|_N^N)^{\frac1{N-1}} u_\ep^{\frac N{N-1}}) + \gamma_\ep w_\ep^{N-1}.
\end{equation}
Denote $f_\ep = \frac{\al_\ep c_\ep}{\lam_\ep} u_\ep^{\frac1{N-1}} \Phi_N'(\be_\ep (1+ \al \|u_\ep\|_N^N)^{\frac1{N-1}} u_\ep^{\frac N{N-1}})$. By Lemma \ref{deltafunction}, the sequence $f_\ep$ is bounded in $L^1_{\rm loc}(\R^N)$. We first prove that the sequence $w_\ep$ is bounded in $W_{\rm loc}^{1,q}(\R^N)$ for any $1< q < N$. We will need the following result of Yang \cite{Yang06*,Yang07corrige}: \emph{Let $\Om$ be a smooth bounded domain in $\R^N$. Let $g_\ep$ is a bounded sequence in $L^1(\Om)$ and $v_\ep \in C^1(\o\Om) \cap W^{1,N}_0(\Om)$ satisfy
\[
-\De_N v_\ep = g_\ep,\quad\text{in}\quad \Om.
\]
Then for any $1< q < N$, there exists a constant $C$ depending only on $N,q$, $\Om$ and the upper bound of $\|g_\ep\|_{1,\Om}$ such that $\|\na v_\ep\|_{q,\Om} \leq C$.}

We claim that $w_\ep$ is bounded in $L^{N-1}_{\rm loc} (\R^N)$. Indeed, if otherwise, there exists $R >0$ such that $\|w_\ep\|_{N-1,B_R} \to \infty$ as $\ep \to 0$. Denote $v_\ep = w_\ep /\|w_\ep\|_{N-1,B_R}$, we have
\[
-\Delta_N v_\ep = \frac{f_\ep}{\|w_\ep\|_{N-1,B_R}^{N-1}} + (\gamma_\ep -1)v_\ep^{N-1} =: g_\ep.
\]
Note that $\|g_\ep\|_{1,B_R}$ is bounded. Let $\tilde v_\ep = v_\ep -v_\ep(R)$ then $\tilde v_\ep \in C^1(\o{B_R}) \cap W^{1,N}_0(B_R)$ and $-\De_N \tilde v_\ep = g_\ep$ in $B_R$. Applying the observation above, we get $\|\na v_\ep\|_{q,B_R} = \|\na \tilde v_\ep\|_{q,B_R}$ is bounded by a constant depending only on $q, R$ and $N$. Since $\|v_\ep\|_{N-1,B_R} =1$ for all $\ep$, then by Sobolev inequality, we get that $v_\ep$ is bounded in $W^{1,q}(B_R)$ for any $1< q < N$. Thus $v_\ep \to v_0$ weakly in $W^{1,q}(B_R)$ for any $1< q < N$ and in $L^{N-1}(B_R)$. Obviously, we have $\|v_0\|_{N-1,B_R} =1$ and 
\[
-\De_N v_0 = (\al-1) v_0\quad \text{ in } \quad B_R.
\]
Note that $v_0$ is a decreasing radially symmetric function on $B_R$, applying the standard elliptic regularity to the equation above, we have $v_0\in C^1(B_R)$ and is bounded. Taking $v_0$ as a test function, we get
\[
(\al -1) \|v_0\|_{N-1,B_R}^{N-1} = -\int_{B_R} \De_N v_0 \, v_0 dx = \int_{\pa B_R} |\na v_0|^{N-1} v_0 ds \geq 0,
\]
which forces $v_0\equiv 0$ since $\al < 1$, here $ds$ denotes the surface area measure on the sphere $\pa B_R$. This contradicts to $\|v_0\|_{N-1,B_R} =1$. Hence our claim is proved.

Back to \eqref{eq:wep}, we see that $-\De_N w_\ep$ is bounded in $L^1_{\rm loc}(\R^N)$. Repeating the argument above for $v_\ep$, we then get the boundedness of $\|\na w_\ep\|_{q,B_R}$ for any $1 < q < N$ and for any $R >0$. Since $w_\ep$ is bounded in $L^{N-1}_{\rm loc}(\R^N)$, by Sobolev inequality, we get that $w_\ep$ is bounded in $W_{\rm loc}^{1,q}(\R^N)$. Thus $w_\ep \rightharpoonup G_\al$ weakly in $W^{1,q}_{\rm loc}(\R^N)$ for any $1 < q < N$.

From Lemma \ref{uppersup} and radial lemma, we see that $f_\ep \to 0$ uniformly on $B_R^c$ for any $R >0$, hence $-\Delta_N w_\ep $ is bounded in $L^p_{\rm loc}(\R^N\setminus\{0\})$. Applying standard elliptic estimates to \eqref{eq:wep}, we get that $w_\ep \to G_\al$ in $C^1_{\rm loc}(\R^N\setminus\{0\})$.

Let $\ep \to 0$ and using Lemma \ref{deltafunction}, we easily obtain from \eqref{eq:wep} that $G_\al$ satisfies the equation \eqref{eq:Green}. Multiplying both side of \eqref{eq:wep} by $w_\ep$, integrating on $B_R^c$ and using the inequality $t\Phi_N'(t) \leq t^{N-1}e^t$, we get
\begin{align}\label{eq:xxxx}
&\int_{B_R^c}\lt(|\na w_\ep|^N + (1-\ga_\ep)w_\ep^N\rt) dx \notag\\
&= \int_{\pa B_R} |\na w_\ep|^{N-1} w_\ep ds + \frac{\al_\ep c_\ep^{\frac N{N-1}}}{\lam_\ep}\int_{B_R^c} u_\ep^{\frac N{N-1}} \Phi_N'(\be_\ep (1+ \al \|u_\ep\|_N^N)^{\frac1{N-1}} u_\ep^{\frac N{N-1}}) dx\notag\\
&\leq \int_{\pa B_R} |\na w_\ep|^{N-1} w_\ep ds + \frac{\al_\ep c_\ep^{\frac N{N-1}}}{\lam_\ep}\be_N^{N-2} (1+ \al)^{\frac{N-2}{N-1}}e^{\beta_N(1+\al)^{\frac1{N-1}}u_\ep(R)^{\frac N{N-1}}} \int_{B_R^c} u_\ep^{N} dx.
\end{align}
The last term on right hand side of \eqref{eq:xxxx} tends to zero because of Lemma \ref{uppersup} and the fact $\|u_\ep\|_N \to 0$. Using Fatou's lemma, we get
\[
\int_{B_R^c}\lt(|\na G_\al|^N + (1-\al)G_\al^{N}\rt) dx \leq \int_{\pa B_R} |\na G_\al|^{N-1} G_\al ds.
\]
Since $\al < 1$, we then have $G_\al \in W^{1,N}(\R^N \setminus B_R)$ for any $R > 0$. The form \eqref{eq:Greenform} of $G$ follows from the Lemma $3.8$ of Li and Ruf (see \cite{LiRuf2008}).
\end{proof}

We proceed by proving an upper bound for $MT(N,\beta_N,\al)$ under the assumptions that $c_\ep \to \infty$. More precisely, we have the following
\begin{lemma}\label{chantrenMT}
Under the assumption $c_\ep \to \infty$, we have
\begin{equation}\label{eq:chantrenMT}
MT(N,\beta_N,\al) \leq \frac{\om_{N-1}}N e^{\beta_N A_\al + 1 + \frac12 + \cdots + \frac1{N-1}},
\end{equation}
where $A_\al$ given in \eqref{eq:Greenform}.
\end{lemma}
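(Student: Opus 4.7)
The plan is to isolate the contribution from a small ball $B_{\delta_\ep}$ on which $u_\ep$ is large, and to show that the integral outside is asymptotically negligible. Fix $c>1$ and, using the radial decreasing property, define $\delta_\ep>0$ by $u_\ep(\delta_\ep)=c_\ep/c$. I would first extract the asymptotic size of $\delta_\ep$ from Lemma \ref{Greenfunction}: since $c_\ep^{1/(N-1)}u_\ep\to G_\al$ in $C^1_{\rm loc}(\R^N\setminus\{0\})$ and $G_\al(x)=-(N/\beta_N)\ln|x|+A_\al+o(1)$ as $x\to 0$, evaluating at $|x|=\delta_\ep$ gives
\[
\frac{c_\ep^{N/(N-1)}}{c}=-\frac{N}{\beta_N}\ln\delta_\ep+A_\al+o(1),\qquad\text{hence}\qquad \frac{\om_{N-1}}{N}\delta_\ep^N\,e^{\beta_Nc_\ep^{N/(N-1)}/c}=\frac{\om_{N-1}}{N}e^{\beta_NA_\al+o(1)}.
\]

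On the complement $B_{\delta_\ep}^c=\{u_\ep\le c_\ep/c\}$, I would mimic the computation carried out in Lemma \ref{uppersup}: the truncation $u_{\ep,c}=\min\{u_\ep,c_\ep/c\}$ agrees with $u_\ep$ there and satisfies $\|u_{\ep,c}\|_{W^{1,N}(\R^N)}^N\to 1/c<1$ by Lemma \ref{truncationuep}, combined with $\|u_\ep\|_N\to 0$. Since $\beta_\ep(1+\al\|u_\ep\|_N^N)^{1/(N-1)}\|u_{\ep,c}\|_{W^{1,N}(\R^N)}^{N/(N-1)}\to\beta_Nc^{-1/(N-1)}<\beta_N$, a slight variant of the argument using the sharp Moser--Trudinger inequality on balls combined with the radial decay lemma gives $\int_{B_{\delta_\ep}^c}\Phi_N(\beta_\ep(1+\al\|u_\ep\|_N^N)^{1/(N-1)}u_\ep^{N/(N-1)})\,dx=o_\ep(1)$.

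The main work is the estimate on $B_{\delta_\ep}$. Set $v_\ep=(u_\ep-c_\ep/c)_+$, which lies in $W^{1,N}_0(B_{\delta_\ep})$ with $A_\ep:=\|\na v_\ep\|_N^N\to 1-1/c$, again by Lemma \ref{truncationuep}, and whose gradient concentrates at the origin by Lemma \ref{azero}. Using the same elementary inequality employed in \eqref{eq:chantrenun}, for any $\de>0$,
\[
u_\ep^{N/(N-1)}\le(1+\de)(c_\ep/c)^{N/(N-1)}+C(\de)\,v_\ep^{N/(N-1)}\quad\text{on }B_{\delta_\ep},
\]
with $C(\de)=(1-(1+\de)^{-1/(N-1)})^{1-N}$. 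Writing $v_\ep=A_\ep^{1/N}w_\ep$ with $\|\na w_\ep\|_N=1$ and choosing $\de=\de(c)$ so that $C(\de)A_\ep^{1/(N-1)}(1+\al\|u_\ep\|_N^N)^{1/(N-1)}\to 1$ as $\ep\to 0$ and then $c\to 1$, I would apply the sharp Carleson--Chang capacity estimate to the radially symmetric sequence $\{w_\ep\}\subset W^{1,N}_0(B_{\delta_\ep})$ (after rescaling to the unit ball), whose gradients concentrate at the origin. This yields
\[
\limsup_{\ep\to 0}\int_{B_{\delta_\ep}}e^{\beta_N w_\ep^{N/(N-1)}}dx\le|B_{\delta_\ep}|\,e^{1+\frac12+\cdots+\frac1{N-1}}+o_\ep(1).
\]
Multiplying by the factor $e^{\beta_\ep(1+\al\|u_\ep\|_N^N)^{1/(N-1)}(1+\de)(c_\ep/c)^{N/(N-1)}}$ from the splitting and using the asymptotic of $\delta_\ep^N$ established in the first step, the outer exponential and the measure $|B_{\delta_\ep}|$ combine exactly to $(\om_{N-1}/N)e^{\beta_NA_\al}$ up to lower order. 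Summing the contributions on $B_{\delta_\ep}$ and $B_{\delta_\ep}^c$ and passing $c\to 1^+$ and $\de\to 0$ yields \eqref{eq:chantrenMT}.

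The principal obstacle is the sharp Carleson--Chang estimate in the fourth step: one must ensure the effective coefficient in front of $w_\ep^{N/(N-1)}$ does not exceed $\beta_N$ in the limit, which requires a careful simultaneous choice of the parameter $\de=\de(c)$ and the passage $c\to 1$, while keeping the factor $(1+\al\|u_\ep\|_N^N)^{1/(N-1)}\to 1$ (guaranteed by Lemma \ref{azero}). Also, the replacement of $\Phi_N$ by $e^t$ is legitimate here because the subtracted polynomial tail contributes $o_\ep(1)$ on $B_{\delta_\ep}$ in view of Lemma \ref{uppersup} and the fact that $c_\ep\to\infty$.
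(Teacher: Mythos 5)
Your outline fails at the decisive step: the Young-type splitting $u_\ep^{\frac N{N-1}}\le(1+\de)(c_\ep/c)^{\frac N{N-1}}+C(\de)\,v_\ep^{\frac N{N-1}}$ is too lossy to produce the sharp constant $e^{\beta_N A_\al}$. To apply Carleson--Chang you need the effective coefficient to satisfy $C(\de)\,(1-\tfrac1c)^{\frac1{N-1}}\le 1$, while to have $|B_{\delta_\ep}|$ cancel the boundary-value exponential (your $\delta_\ep$-asymptotics give $\delta_\ep^N e^{\beta_N c_\ep^{N/(N-1)}/c}=O(1)$, with $c_\ep^{N/(N-1)}/c$, not $(c_\ep/c)^{N/(N-1)}$) you need $(1+\de)\le c^{\frac1{N-1}}$. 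Writing $s=(1+\de)^{-\frac1{N-1}}$, $t=C(\de)^{-\frac1{N-1}}$, the elementary inequality forces $s+t=1$, while the two constraints read $s\ge(1/c)^{\theta}$, $t\ge(1-1/c)^{\theta}$ with $\theta=\frac1{(N-1)^2}$. Since $x^\theta+(1-x)^\theta>1$ for $\theta<1$, the two requirements are incompatible for every $c>1$ and $\de>0$ when $N\ge3$: whatever $\de(c)$ you pick, one of the two exponents carries an uncancelled term of size $(\text{positive const})\cdot\beta_N c_\ep^{\frac N{N-1}}\to\infty$, and the estimate degenerates. For $N=2$ the constraints can only be met with exact equality, which leaves no room for the error terms; and those errors are fatal because everything in the exponent is multiplied by $c_\ep^{\frac N{N-1}}\to\infty$: Lemma \ref{truncationuep} gives $A_\ep\to1-\frac1c$ with no rate, so ``choose $\de$ so that $C(\de)A_\ep^{1/(N-1)}(1+\al\|u_\ep\|_N^N)^{1/(N-1)}\to1$'' does not keep the coefficient $\le\beta_N$ in any quantitative sense, and $(1+\al\|u_\ep\|_N^N)^{\frac1{N-1}}c_\ep^{\frac N{N-1}}=c_\ep^{\frac N{N-1}}+\frac{\al}{N-1}\|G_\al\|_N^N+o(1)$ contributes a genuine extra factor $e^{\frac{\al\beta_N}{N-1}\|G_\al\|_N^N}$ that your scheme never removes. (A further, smaller, issue: evaluating $G_\al$ at $|x|=\delta_\ep\to0$ is not justified by $C^1_{\rm loc}(\R^N\setminus\{0\})$ convergence, which is uniform only on compacts away from the origin.)

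The paper avoids all of this by never using a multiplicative-loss inequality. It works on a fixed small ball $B_\de$, obtains the exact second-order expansion \eqref{eq:baab} of $\int_{B_\de}|\na u_\ep|^N dx$ (precise to $o(c_\ep^{-N/(N-1)})$, via the identity \eqref{eq:xxxx} and $c_\ep^{1/(N-1)}u_\ep\to G_\al$), sets $u_{\ep,\de}=u_\ep-u_\ep(\de)$ and normalizes by the true local energy $\tau_{\ep,\de}$, and expands the cross term $\beta_N(u_{\ep,\de}+u_\ep(\de))^{\frac N{N-1}}=\beta_N u_{\ep,\de}^{\frac N{N-1}}+\frac{N\beta_N}{N-1}u_{\ep,\de}^{\frac1{N-1}}u_\ep(\de)+\dots$ exactly. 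In that bookkeeping the $\al\|G_\al\|_N^N$ coming from $(1+\al\|u_\ep\|_N^N)^{\frac1{N-1}}$ cancels exactly against the $\al\|G_\al\|_N^N$ in the energy deficit (through division by $\tau_{\ep,\de}$), and the $-N\ln\de+\beta_N A_\al$ terms combine with coefficient exactly $1$, which is what produces $\frac{\om_{N-1}}N e^{\beta_N A_\al+1+\frac12+\cdots+\frac1{N-1}}$ after Carleson--Chang. If you want to salvage your route, you must replace the $(1+\de)/C(\de)$ splitting and the rate-free limits by these exact expansions; as written, the argument does not close.
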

\begin{proof}
For any $\de >0$, from the proof of Lemma \ref{Greenfunction}, we see that
\[
\frac{\al_\ep c_\ep^{\frac N{N-1}}}{\lam_\ep}\int_{B_R^c} u_\ep^{\frac N{N-1}} \Phi_N'(\be_\ep (1+ \al \|u_\ep\|_N^N)^{\frac1{N-1}} u_\ep^{\frac N{N-1}}) dx = o_\ep(\de),
\]
where $o_\ep(\de) \to 0$ as $\ep \to 0$ and $\de $ is fixed. Since $G_\al\in W^{1,N}_{\rm loc}(\R^N\setminus 0)$ then
\begin{equation}\label{eq:2x}
\int_{B_R^c} (|\na G_\al|^N + (1-\al) G_\al^N) dx = \int_{\pa B_R} |\na G_\al|^{N-1} G_\al ds.
\end{equation}
Hence
\[
\lim_{R\to \infty} \int_{\pa B_R} |\na G_\al|^{N-1} G_\al ds =0.
\]
This together \eqref{eq:xxxx} and \eqref{eq:2x} implies
\[
\lim_{R\to \infty}\lim_{\ep \to 0} \int_{B_R^c} |w_\ep|^N dx =0.
\]
Consequently, we have $w_\ep \to G_\al$ in $L^N(\R^N)$ since $w_\ep \to G_\al$ in $L^N_{\rm loc}(\R^N)$. Using \eqref{eq:xxxx} with $R$ replaced by $\de$, we have
\begin{align*}
\int_{B_\de^c} (|\na u_\ep|^N + u_\ep^N) dx &=\frac1{c_\ep^{\frac N{N-1}}}\lt(\int_{\pa B_\de} |\na G_\al|^{N-1} G_\al ds + \al \|G_\al\|_N^N + o_\ep(\de) + o_\de(1)\rt)\notag\\
&=\frac1{c_\ep^{\frac N{N-1}}}\lt(-\frac N{\beta_N} \ln \de + A_\al + \al \|G_\al\|_N^N + o_\ep(\de) + o_\de(1)\rt).
\end{align*}
Note that 
\[
\int_{B_\de} |u_\ep|^N dx =\frac1{c_\ep^{\frac N{N-1}}} \lt(\int_{B_\de} G_\al^N dx + o_\ep(\de)\rt) =\frac1{c_\ep^{\frac N{N-1}}} \lt(o_\de(1) + o_\ep(\de)\rt).
\]
Combining two previous estimates together the fact $\|\na u_\ep\|_N^N + \|u_\ep\|_N^N =1$, we get
\begin{equation}\label{eq:baab}
\int_{B_\de} |\na u_\ep|^N dx = 1-\frac1{c_\ep^{\frac N{N-1}}}\lt(-\frac N{\beta_N} \ln \de + A_\al + \al \|G_\al\|_N^N + o_\ep(\de) + o_\de(1)\rt).
\end{equation}
Denote $\tau_{\ep,\de} = \|\na u_\ep\|_{N,B_\de}^{\frac N{N-1}}$, and $u_{\ep,\de} =u_\ep -u_\ep(\de)$. Obviously, $u_{\ep,\de} \in W^{1,N}_0(B_\de)$. Using the result of Carleson and Chang \cite{CC1986}, we have
\begin{equation}\label{eq:CarlesonChang}
\limsup_{\ep\to 0} \int_{B_\de} \lt(e^{\beta_N u_{\ep,\de}^{\frac N{N-1}}/\tau_{\ep,\de}} -1\rt) dx \leq \de^N \frac{\om_{N-1}}{N} e^{1+ \frac12+ \cdots + \frac1{N-1}}.
\end{equation}
For a fixed $R >0$, we have $B_{Rr_\ep} \subset B_\de$ for $\ep$ small. We know from Lemma \ref{limpsiep} that $u_\ep = c_\ep(1+ o_\ep(R))$ uniformly on $B_{Rr_\ep}$. From \eqref{eq:baab}, we have $\tau_{\ep,\de} \leq 1$ for $\ep$ and $\de >0$ small. This together Lemma \ref{uppersup} and Lemma \ref{Greenfunction} leads to on $B_{Rr_\ep}$ that
\begin{align*}
\beta_\ep (1+ \al\|u_\ep\|_N^N)^{\frac1{N-1}}u_\ep^{\frac N{N-1}}&\leq \beta_N u_{\ep}^{\frac{N}{N-1}} + \frac{\al\beta_N}{N-1} \|u_\ep\|_N^N  u_{\ep}^{\frac N{N-1}}\\
& = \beta_N \lt(u_{\ep,\de} + u_\ep(\de)\rt)^{\frac N{N-1}} + \frac{\al \beta_N}{N-1} \|G_\al\|_N^N + o_\ep(R) \\
&=\beta_N u_{\ep,\de}^{\frac{N}{N-1}} + \frac{N\beta_N}{N-1}u_{\ep,\de}^{\frac1{N-1}} u_\ep(\de) + \frac{\al \beta_N}{N-1} \|G_\al\|_N^N + o_\ep(R)  + o_\ep(\de)\\
&= \frac{u_{\ep,\de}^{\frac{N}{N-1}}}{\tau_{\ep,\de}} -N\ln \de +\beta_N A_\al + o_\ep(\de) + o_\ep(R)+ o_\de(1).
\end{align*}
Since $\Phi(t) \leq e^t$ with $t\geq 0$, then
\begin{align*}
\int_{B_{Rr_\ep}} \Phi_N(\beta_\ep (1+ \al&\|u_\ep\|_N^N)^{\frac1{N-1}} u_\ep^{\frac N{N-1}}) dx\notag\\
&\leq \de^{-N} e^{\beta_N A_\al +o_\ep(\de) + o_\ep(R)+ o_\de(1)}\int_{B_{Rr_\ep}} e^{\beta_N u_{\ep,\de}^{\frac N{N-1}}/\tau_{\ep,\de}} dx\notag\\
&=\de^{-N} e^{\beta_N A_\al +o_\ep(\de) + o_\ep(R)+ o_\de(1)}\int_{B_{Rr_\ep}}\lt( e^{\beta_N u_{\ep,\de}^{\frac N{N-1}}/\tau_{\ep,\de}}-1\rt) dx + o_\ep(\de)\notag\\
&\leq \de^{-N} e^{\beta_N A_\al +o_\ep(\de) + o_\ep(R)+ o_\de(1)}\int_{B_\de}\lt( e^{\beta_N u_{\ep,\de}^{\frac N{N-1}}/\tau_{\ep,\de}}-1\rt) dx + o_\ep(\de).
\end{align*}
This together \eqref{eq:CarlesonChang} implies by letting $\ep \to 0$ and $\de\to 0$
\begin{equation}\label{eq:x1}
\limsup_{\ep \to 0} \int_{B_{Rr_\ep}} \Phi_N(\beta_\ep (1+ \al\|u_\ep\|_N^N)^{\frac1{N-1}} u_\ep^{\frac N{N-1}}) dx \leq \frac{\om_{N-1}}{N} e^{\beta_N A_\al + 1+ \frac12+ \cdots + \frac1{N-1}}
\end{equation}
for any $R >0$. In the other hand, using \eqref{eq:vocungnho1} and making the change of variable, we have
\begin{align*}
\int_{B_{Rr_\ep}} \Phi_N(\beta_\ep (1+ \al\|u_\ep\|_N^N)^{\frac1{N-1}} &u_\ep^{\frac N{N-1}}) dx \\
&=(1+o_\ep(R))\int_{B_{Rr_\ep}} e^{\beta_\ep (1+ \al\|u_\ep\|_N^N)^{\frac1{N-1}} u_\ep^{\frac N{N-1}}} dx\\
&=(1+o_\ep(R))\frac{\lam_\ep}{c_\ep^{\frac{N}{N-1}}}\int_{B_{R}} e^{\beta_\ep (1+ \al\|u_\ep\|_N^N)^{\frac1{N-1}} c_\ep^{\frac{N}{N-1}}(\psi_\ep^{\frac N{N-1}}-1)} dx\\
&=(1+o_\ep(R))\frac{\lam_\ep}{c_\ep^{\frac{N}{N-1}}}\int_{B_{R}} e^{(1+o_{\ep}(R))\beta_N \frac{N}{N-1}\vphi} dx,
\end{align*}
here the last equality comes from \eqref{eq:uniformvphi}. Letting $\ep \to 0$ and $R\to \infty$ we obtain \eqref{eq:chantrenMT} by using \eqref{eq:x1} and \eqref{eq:uppersup}. Our proof is finished.
\end{proof}

To finish the proof of Theorem \ref{Maintheorem} in the critical case, we will construct a sequence of test functions $\phi_\ep$ such that $\|\phi_\ep\|_{W^{1,N}(\R^N)} =1$ such that
\begin{equation}\label{eq:test}
\int_{\R^N } \Phi_N(\beta_\ep (1+ \al\|\phi_\ep\|_N^N)^{\frac1{N-1}} |\phi_\ep|_\ep^{\frac N{N-1}}) dx  > \frac{\om_{N-1}}{N} e^{\beta_N A_\al + 1+ \frac12+ \cdots + \frac1{N-1}},
\end{equation}
holds for $\al, \ep >0$ small.

For this purpose, let us define
\begin{equation}\label{eq:phiep}
\phi_\ep(x) =
\begin{cases}
c + \frac{1}{c^{\frac 1{N-1}}}\lt(-\frac{N-1}{\beta_N}\ln\lt(1 + c_N \lt(\frac{|x|}\ep\rt)^{\frac{N}{N-1}}\rt) + A\rt), &\mbox{$|x| \leq R\ep$,}\\
\frac{1}{c^{\frac1{N-1}}} G_\al&\mbox{$|x| > R\ep$,}
\end{cases}
\end{equation}
where $c_N = \beta_N/ N^{N/(N-1)} = (\om_{N-1}/ N)^{1/(N-1)}$, $R = -\ln \ep$ and $b,c$ are constants determined later. To ensure $\phi_\ep \in W^{1,N}(\R^N)$ we must have
\[
c + \frac{1}{c^{\frac 1{N-1}}}\lt(-\frac{N-1}{\beta_N}\ln\lt(1 + c_N R^{\frac{N}{N-1}}\rt) + A\rt) = \frac1{c^{\frac1{N-1}}} G_\al(R\ep).
\]
Using the form \eqref{eq:Greenform} of $G_\al$ and $R = -\ln \ep$, we get
\begin{equation}\label{eq:caac}
c^{\frac{N}{N-1}} = \frac1{\beta_N} \ln \frac{\om_{N-1}}N +A_\al -\frac{N}{\beta_N} \ln \ep -A + O(R^{-\frac{N}{N-1}}).
\end{equation}
Integration by parts together \eqref{eq:Green} and \eqref{eq:Greenform} gives
\begin{equation}\label{eq:out}
\int_{B_{R\ep}^c}(|\na \phi_\ep|^N + |\phi_\ep|^N)dx = \frac1{c^{\frac N{N-1}}}\lt[\al \|G_\al\|_N^N -\frac{N}{\beta_N} \ln R\ep + A_\al + O\lt(\lt(-R\ep \ln R\ep\rt)^N\rt)\rt].
\end{equation}
A straightforward computation shows that
\begin{equation}\label{eq:int1}
\int_{B_{R\ep}} |\na \phi_\ep|^N dx = \frac1{c^{\frac N{N-1}}} \lt(\frac{N}{\beta_N} \ln R + \frac1{\beta_N} \ln \frac{\om_{N-1}}N -\frac{N-1}{\beta_N} \sum_{k=1}^{N-1} \frac1k + O(R^{-\frac N{N-1}})\rt).
\end{equation}
Using \eqref{eq:caac}, we can easily check that
\begin{equation}\label{eq:int2}
\int_{B_{R\ep}} |\phi_\ep|^N dx = \frac1{c^{\frac N{N-1}}} O((-R\ep \ln (R\ep))^N).
\end{equation}
Putting \eqref{eq:out}, \eqref{eq:int1} and \eqref{eq:int2} together, we get 
\begin{equation}\label{eq:wwww}
\|\phi_\ep\|_N^N =\frac1{c^{\frac N{N-1}}} \lt( \|G_\al\|_N^N + O((-R\ep \ln (R\ep))^N)\rt)
\end{equation}
and 
\begin{align*}
&\|\phi_\ep\|_{W^{1,N}(\R^N)}^N \\
&\qquad = \frac1{c^{\frac N{N-1}}} \lt(\al \|G_\al\|_N^N -\frac N{\beta_N} \ln \ep + A_\al + \frac1{\beta_N} \ln \frac{\om_{N-1}}N -\frac{N-1}{\beta_N} \sum_{k=1}^{N-1} \frac1k + O(R^{-\frac N{N-1}})\rt).
\end{align*}
Hence, for $\ep$ small, we can choose $c$ such that $\|\phi_\ep\|_{W^{1,N}(\R^N)} =1$, and 
\begin{equation}\label{eq:c}
c^{\frac N{N-1}} = \al \|G_\al\|_N^N -\frac N{\beta_N} \ln \ep + A_\al + \frac1{\beta_N} \ln \frac{\om_{N-1}}N -\frac{N-1}{\beta_N} \sum_{k=1}^{N-1} \frac1k + O(R^{-\frac N{N-1}}). 
\end{equation}
From \eqref{eq:wwww}, \eqref{eq:c} and $R = -\ln \ep$, we get
\begin{align}\label{eq:yyyy}
(1+ \al \|\phi_\ep\|_N^N)^{\frac1{N-1}} &= 1+  \frac{\al \|G_\al\|_N^N}{(N-1)c^{\frac N{N-1}}} - \frac{N-2}{2(N-1)^2} \frac{\alpha^2\|G\|_N^{2N}}{c^{\frac{2N}{N-1}}} + O(c^{-\frac{3N}{N-1}}).
\end{align}

We continue by estimating the integral of $\Phi_N(\beta_N(1+ \al\|\phi_\ep\|_N^N)^{\frac1{N-1}} |\phi_\ep|^{\frac N{N-1}})$ on $\R^N$. Using the inequality $\Phi_N(t) \geq t^{N-1}/(N-1)!$, $t\geq 0$, we have
\begin{align}\label{eq:out*}
\int_{B_{R\ep}^c} \Phi_N(\beta_N(1+& \al\|\phi_\ep\|_N^N)^{\frac1{N-1}} |\phi_\ep|^{\frac N{N-1}}) dx \notag\\
&\geq \frac{\beta_N^{N-1}(1+ \al\|\phi_\ep\|_N^N)}{(N-1)!} \int_{B_{R\ep}^c} |\phi_\ep|^N dx\notag\\
&= \frac{\beta_N^{N-1}}{(N-1)!} \frac{(1+ \al\|\phi_\ep\|_N^N)\lt(\|G_\al\|_N^N + O((-R\ep \ln (R\ep))^N)\rt)}{c^{\frac N{N-1}}}\notag\\
&=\frac{\beta_N^{N-1}}{(N-1)!}\frac{\|G_\al\|_N^N}{c^{\frac N{N-1}}} + O(R^{-\frac N{N-1}}),
\end{align}
here we use \eqref{eq:wwww} and \eqref{eq:c} and $R =-\ln \ep$. 

For $\ep >0$ small, we have $\phi_\ep > 0$ on $B_{Rr_\ep}$. It is easy to check that $(1+t)^a \geq 1 + at$ for any $t >-1$ and $a \in (1,2]$. Applying this inequality, we get on $B_{R\ep}$
\begin{align*}
\phi_\ep^{\frac N{N-1}} &\geq c^{\frac N{N-1}} + \frac{N}{N-1}\lt(-\frac{N-1}{\beta_N}\ln\lt(1 + c_N \lt(\frac{|x|}\ep\rt)^{\frac{N}{N-1}}\rt) + A\rt).
\end{align*}
Plugging \eqref{eq:caac} and \eqref{eq:c} into this estimate, we get
\begin{align*}
\phi_\ep^{\frac N{N-1}}&\geq -\frac{\al}{N-1} \|G_\al\|_N^N + A_\al+ \frac1{\beta_N} \ln \frac{\om_{N-1}}N +\frac{1}{\beta_N} \sum_{k=1}^{N-1} \frac1k  + \frac{N}{\beta_N} R + \frac{N}{N-1} \vphi\lt(\frac x\ep\rt)\\
&=c^{\frac{N}{N-1}} -\frac{N}{N-1} \al \|G_\al\|_N^N + \frac{N}{\beta_N} \sum_{k=1}^{N-1} \frac1k + \frac N{N-1} \vphi\lt(\frac x\ep\rt),
\end{align*}
here the equality is obtained by using again \eqref{eq:c}. This together \eqref{eq:yyyy} leads to
\begin{align}\label{eq:fiwi}
(1+ \al \|\phi_\ep\|_N^N)^{\frac1{N-1}}\phi_\ep^{\frac N{N-1}}&\geq -\frac N{\beta_N} \ln \ep + A_\al + \frac1{\beta_N} \ln \frac{\om_{N-1}}N + \frac1{\beta_N} \sum_{k=1}^{N-1} \frac1k\notag\\
&\quad + \frac N{N-1} \lt(\sum_{k=1}^{N-1} \frac1k\rt) \frac{\al \|G_\al\|_N^{N}}{c^{\frac N{N-1}}} - \frac{N^2-N+2}{2(N-1)^2} \frac{\al^2 \|G_al\|_N^{2N}}{c^{\frac N{N-1}}}\notag\\
&\quad +  \lt(1+  \frac{\al \|G_\al\|_N^N}{(N-1)c^{\frac N{N-1}}}\rt) \frac N{N-1} \vphi\lt(\frac x\ep\rt) 
\end{align}
From the definition of $\phi_\ep$ and \eqref{eq:caac}, we have
\begin{equation}\label{eq:wifi}
\int_{B_{R\ep}} \Phi_N(\beta_N(1+ \al \|\phi_\ep\|_N^N)^{\frac1{N-1}}\phi_\ep^{\frac N{N-1}}) dx = \int_{B_{R\ep}} e^{\beta_N(1+ \al \|\phi_\ep\|_N^N)^{\frac1{N-1}}\phi_\ep^{\frac N{N-1}}} dx + O(R^{-\frac{N}{N-1}}).
\end{equation}
Plugging \eqref{eq:fiwi} into \eqref{eq:wifi} and using \eqref{eq:vphian}, we get
\begin{align}\label{eq:jqk}
\int_{B_{R\ep}} \Phi_N(\beta_N(1+ \al &\|\phi_\ep\|_N^N)^{\frac1{N-1}}\phi_\ep^{\frac N{N-1}}) dx \notag\\
&\geq \frac{\om_{N-1}}N e^{\beta_N A_\al +\sum_{k=1}^{N-1} \frac1k +\frac {N\beta_N}{N-1} \lt(\sum_{k=1}^{N-1} \frac1k\rt) \frac{\al \|G\|_N^{N}}{c^{\frac N{N-1}}} - \frac{(N^2-N+2)\beta_N}{2(N-1)^2} \frac{\al^2 \|G\|_N^{2N}}{c^{\frac N{N-1}}}}\times\notag\\
&\quad \times  \int_{B_{R}} \lt(1 + c_N |x|^{\frac{N}{N-1}} \rt)^{-N\lt(1+  \frac{\al \|G_\al\|_N^N}{(N-1)c^{\frac N{N-1}}}\rt)} dx + O(R^{-\frac N{N-1}}).
\end{align}
It is easy to check that 
\begin{align*}
\int_{B_{R}} \Big(1 + c_N |x|^{\frac{N}{N-1}} &\Big)^{-N\Big(1+ \frac{\al \|G_\al\|_N^N}{(N-1)c^{\frac N{N-1}}}\Big)} dx\\
& = \int_{\R^N} \lt(1 + c_N |x|^{\frac{N}{N-1}} \rt)^{-N\lt(1+  \frac{\al \|G_\al\|_N^N}{(N-1)c^{\frac N{N-1}}}\rt)} dx + O(R^{-\frac N{N-1}})
\end{align*}
and 
\begin{align*}
\int_{\R^N} \Big(1 + c_N |x|^{\frac{N}{N-1}} \Big)^{-N\Big(1+ \frac{\al \|G_\al\|_N^N}{(N-1)c^{\frac N{N-1}}}\Big)} dx& =\frac{\Gamma\lt(1 + \frac{N\al \|G_\al\|_N^N}{(N-1)c^{\frac N{N-1}}}\rt) \Gamma(N)}{\Gamma\lt(N + \frac{N\al \|G_\al\|_N^N}{(N-1)c^{\frac N{N-1}}}\rt)} \\
&= 1 -\frac{N^2}{2(N-1)!}\frac{\al \|G_\al\|_N^N}{c^{\frac N{N-1}}} + O(R^{-\frac N{N-1}}).
\end{align*}
These two estimates and \eqref{eq:jqk} imply
\begin{align}\label{eq:int*}
\int_{B_{R\ep}} \Phi_N(\beta_N(1+ \al &\|\phi_\ep\|_N^N)^{\frac1{N-1}}\phi_\ep^{\frac N{N-1}}) dx \notag\\
&\geq \frac{\om_{N-1}}N e^{\beta_N A_\al +\sum_{k=1}^{N-1} \frac1k} \Bigg[1 +\lt(\frac{N\beta_N}{N-1} \sum_{k=1}^{N-1}\frac1k -\frac{N^2}{2(N-1)!}\rt)\frac{\al \|G_\al\|_N^N}{c^{\frac N{N-1}}}\notag\\
&\hspace{3cm} -\frac{(N^2 -N+2)\beta_N}{2(N-1)^2}\frac{\al^2 \|G_\al\|_N^{2N}}{c^{\frac N{N-1}}} +O(R^{-\frac N{N-1}}) \Bigg]
\end{align}
Combining \eqref{eq:out*}, \eqref{eq:int*} and \eqref{eq:c}, we obtain
\begin{align}\label{eq:vediem}
\int_{\R^N} &\Phi_N(\beta_N(1+ \al \|\phi_\ep\|_N^N)^{\frac1{N-1}}\phi_\ep^{\frac N{N-1}}) dx\notag\\
&\geq \frac{\om_{N-1}}{N} e^{\beta_N A_\al +1 + \frac12 + \cdots+ \frac1{N-1}} + \frac{\|G_\al\|_N^N}{c^{\frac N{N-1}}}\Bigg[\frac{\beta_N^{N-1}}{(N-1)!} + \frac{\om_{N-1}}N e^{\beta_N A_\al +\sum_{k=1}^{N-1} \frac1k} \times \notag\\
&\qquad \times \alpha \Bigg(\frac{N\beta_N}{N-1} \sum_{k=1}^{N-1}\frac1k -\frac{N^2}{2(N-1)!} -\al \|G\|_N^{N}\frac{(N^2 -N+2)\beta_N}{2(N-1)^2}\Bigg) + o_\ep(1)\Bigg].
\end{align}
Since both $\|G_\al\|_N^N$ and $A_\al$ depend on $\al$. This makes the difficulties to determine the sign of the term in the bracket in \eqref{eq:vediem}. However, we have the following.
\begin{lemma}\label{almostdone}
There exists $\al_0 \in (0,1)$ such that
\begin{align*}
\frac{\beta_N^{N-1}}{(N-1)!} + \frac{\om_{N-1}}N &e^{\beta_N A_\al +\sum_{k=1}^{N-1} \frac1k}\alpha \, \times\\
&\times \Bigg(\frac{N\beta_N}{N-1} \sum_{k=1}^{N-1}\frac1k -\frac{N^2}{2(N-1)!} -\al \|G_\al\|_N^{N}\frac{(N^2 -N+2)\beta_N}{2(N-1)^2}\Bigg) > 0
\end{align*}
for any $0< \al < \al_0$.
\end{lemma}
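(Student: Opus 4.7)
The plan is to exploit the fact that the first term $\beta_N^{N-1}/(N-1)!$ is a strictly positive constant independent of $\alpha$, while the entire second term carries an explicit factor of $\alpha$ in front. Thus the strategy reduces to showing that the bracketed quantity multiplying $\alpha$ stays bounded as $\alpha \downarrow 0$, which will force the whole expression to converge to the positive constant $\beta_N^{N-1}/(N-1)!$ as $\alpha \to 0^+$; by continuity, it remains positive on some interval $[0,\alpha_0)$.

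Concretely, first I would verify that $A_\alpha$ and $\|G_\alpha\|_N^N$ remain bounded (in fact continuous) as $\alpha \to 0^+$. The Green-type function $G_\alpha$ solves the distributional equation
\[
-\Delta_N G_\alpha + (1-\alpha)G_\alpha^{N-1} = \delta_0,
\]
and as $\alpha \downarrow 0$ this is a regular perturbation of the $\alpha = 0$ problem $-\Delta_N G_0 + G_0^{N-1} = \delta_0$, whose solution exists and admits the expansion \eqref{eq:Greenform} with constant $A_0$. Standard continuous dependence for the $N$-Laplacian (using uniform elliptic estimates on $\R^N\setminus B_r$ for $r>0$ together with the explicit logarithmic singularity at the origin, as in Lemma \ref{Greenfunction}) gives $G_\alpha \to G_0$ in $C^1_{\mathrm{loc}}(\R^N\setminus\{0\})$ and in $L^N(\R^N\setminus B_r)$, so that $A_\alpha \to A_0$ and $\|G_\alpha\|_N^N \to \|G_0\|_N^N$ as $\alpha \to 0^+$.

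Given this continuity, the factor $e^{\beta_N A_\alpha + \sum_{k=1}^{N-1}\frac{1}{k}}$ remains bounded on some interval $[0,\alpha_1)$, say by a constant $K$. The quantity inside the big parentheses in the second term is the difference of three numbers, each bounded in $\alpha$ on $[0,\alpha_1)$ (using $\alpha\|G_\alpha\|_N^N \leq \alpha_1 \sup_{[0,\alpha_1)} \|G_\alpha\|_N^N$). Hence the entire second summand is bounded in absolute value by $C\alpha$ for some constant $C$ depending only on $N$ (and the uniform bounds just established). Consequently,
\[
\frac{\beta_N^{N-1}}{(N-1)!} + \frac{\omega_{N-1}}{N}e^{\beta_N A_\alpha + \sum_{k=1}^{N-1}\frac{1}{k}}\alpha\,(\cdots) \;\geq\; \frac{\beta_N^{N-1}}{(N-1)!} - C\alpha,
\]
which is strictly positive whenever $\alpha < \alpha_0 := \min\bigl\{\alpha_1,\ \beta_N^{N-1}/(C\cdot(N-1)!)\bigr\}$.

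The only delicate point I anticipate is the continuous dependence $\alpha \mapsto A_\alpha$ itself: one must ensure that the logarithmic leading term in \eqref{eq:Greenform} does not "hide" a discontinuity in $A_\alpha$ as $\alpha \to 0$. This is handled by testing the equation for $G_\alpha - G_0$ against itself on $\R^N\setminus B_r$ (the leading singularities cancel so the difference lies in $W^{1,N}$) and controlling the boundary integral on $\partial B_r$ via the $C^1$ convergence away from the origin; letting $r \to 0$ then gives $A_\alpha \to A_0$. Once this continuity is in hand, the rest of the argument is an elementary "constant plus $O(\alpha)$" estimate.
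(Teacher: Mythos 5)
Your overall reduction is the right one, and it is also how the paper concludes: since the second summand carries an explicit factor $\alpha$, the lemma follows once $A_\alpha$ and $\|G_\alpha\|_N^N$ are controlled for small $\alpha$, giving ``positive constant $+\,O(\alpha)$''. The difference lies in how that control is obtained, and here your argument has a real soft spot. The paper does not prove any continuity or perturbation statement at all: it observes that the equation $-\De_N G_\alpha + (1-\alpha)G_\alpha^{N-1} = \de_0$ is solved by the exact rescaling $G_\alpha(x) = G_0((1-\alpha)^{1/N}x)$, which yields the closed formulas $A_\alpha = A_0 - \beta_N^{-1}\ln(1-\alpha)$ and $\|G_\alpha\|_N^N = (1-\alpha)^{-1}\|G_0\|_N^N$; boundedness (indeed the precise behavior) near $\alpha=0$ is then immediate and the inequality follows by inspection. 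By contrast, your route makes the continuity of $\alpha\mapsto A_\alpha$ and $\alpha\mapsto\|G_\alpha\|_N^N$ the crux, and this is exactly the part you do not actually establish: the appeal to ``standard continuous dependence for the $N$-Laplacian, as in Lemma \ref{Greenfunction}'' does not apply, because that lemma concerns the limit $\ep\to0$ at \emph{fixed} $\alpha$ (convergence of $c_\ep^{1/(N-1)}u_\ep$ to $G_\alpha$), not the dependence of $G_\alpha$ on $\alpha$. To carry out your sketch one would need uniform (in $\alpha$) a priori bounds for the family $G_\alpha$, a compactness argument, and a uniqueness statement for the quasilinear measure-data problem to identify the limit with $G_0$ and to upgrade subsequential convergence to convergence of $A_\alpha$ itself; your energy-testing sketch for $G_\alpha-G_0$ already presupposes the $C^1_{\rm loc}$ convergence away from the origin that is to be proved. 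So as written there is a genuine gap at the only nontrivial step of the lemma; it can be closed either by developing that stability theory or, much more simply, by the one-line scaling identity above, after which your ``constant plus $C\alpha$'' conclusion goes through verbatim.
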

\begin{proof}
Let $G_0$ be solution of \eqref{eq:Green} corresponding to $\al =0$, i.e., $G_0$ is the distributional solution of 
\[
-\Delta_N G_0 + G_0^{N-1} = \de_0.
\]
Note that 
\[
-\De_N G_\al + (1-\al)G_\al^{N-1} = \de_0.
\]
A simple scaling argument shows that $G_\al(x) = G_0((1-\al)^{1/N} x)$ which implies
\[
A_\al = A_0 -\frac1{\beta_N} \ln(1-\al),\qquad \|G_\al\|^N_N = (1-\al)^{-1} \|G_0\|_N^N.
\] 
Hence
\begin{align*}
& e^{\beta_N A_\al +\sum_{k=1}^{N-1} \frac1k}\alpha \Bigg(\frac{N\beta_N}{N-1} \sum_{k=1}^{N-1}\frac1k -\frac{N^2}{2(N-1)!} -\al \|G_\al\|_N^{N}\frac{(N^2 -N+2)\beta_N}{2(N-1)^2}\Bigg)\\
&\quad = e^{\beta_N A_0 +\sum_{k=1}^{N-1} \frac1k}\frac{\alpha}{1-\al} \Bigg(\frac{N\beta_N}{N-1} \sum_{k=1}^{N-1}\frac1k -\frac{N^2}{2(N-1)!} -\frac{\al}{1-\al} \|G_0\|_N^{N}\frac{(N^2 -N+2)\beta_N}{2(N-1)^2}\Bigg).
\end{align*}
By this equality, we easily obtain the conclusion of this lemma.
\end{proof}
Let $\al_0$ be the number in Lemma \ref{almostdone}. \eqref{eq:vediem} together Lemma \ref{almostdone} implies \eqref{eq:test} for any $0\leq \al < \alpha_0$ when $\ep >0$ small. 

We now have all ingredients to prove Theorem \ref{Maintheorem} in the critical case.\\

\begin{proof}[Proof of Theorem \ref{Maintheorem} in the critical case]
Let $\al_0 \in (0,1)$ as in Lemma \ref{almostdone}, we have \eqref{eq:test} for $\phi_\ep$ defined by \eqref{eq:phiep} and $\ep >0$ small enough. In the light of Lemma \ref{chantrenMT}, the sequence $c_\ep$ is bounded, and hence we can use Lemma \ref{boundedcase} to finish our proof.\\
\end{proof}


\section{Proof of Theorem \ref{Maintheorem2}}
This section is devoted to prove Theorem \ref{Maintheorem2}, that is, $MT(N,\beta_N,1) =\infty$. Our main tools are the Moser sequence used by Moser \cite{M1970} and by Adachi and Tanaka \cite{Adachi} together the scaling argument. We first recall an exchange of functions between function on $\R$ and the radial function on $\R^N$. Let $u$ be a radial function on $\R^N$, as before, we write $u(r)$ instead of $u(x)$ with $|x| =r$. Following the idea of Moser, we define a new function $w$ on $\R$ by
\begin{equation}\label{eq:relation}
w(t) = N^{1-\frac1N} \om_{N-1}^{\frac1N} u(e^{-\frac tN}).
\end{equation}
Then we have the following relation
\begin{equation}\label{eq:Phiofu}
\int_{\R^N} \Phi_N(\beta |u(x)|^{\frac N{N-1}}) dx = \frac{\om_{N-1}}N \int_{\R} \Phi_N\lt(\frac{\beta}{\beta_N} |w(t)|^{\frac N{N-1}}\rt)e^{-t} dt.
\end{equation}
For $k \geq 1$, we consider the sequence $u_k$ of functions in $W^{1,N}(\R^N)$ defined by
\begin{equation}\label{eq:uk}
u_k(x)= 
\begin{cases}
0&\mbox{if $|x|\geq 1$,}\\
-\om_{N-1}^{-\frac1N} \lt(\frac kN\rt)^{-\frac1N} \ln |x|&\mbox{if $e^{-\frac kN} \leq |x| < 1$,}\\
\om_{N-1}^{-\frac1N} \lt(\frac kN\rt)^{\frac{N-1}N}&\mbox{if $0\leq |x| < e^{-\frac kN}$.}
\end{cases}
\end{equation}
Note that $\|\na u_k\|_N =1$ and 
\begin{equation}\label{eq:normuk}
\int_{\R^N} u_k(x)^{N} dx = \frac{N!}{N^N k} + O(k^{N-1}e^{-k}).
\end{equation}
For $R >0$, define $u_{k,R}(x) = u_{k}(x/R)$, and $\tilde u_{k,R} = u_{k,R}/ \|u_{k,R}\|_{W^{1,N}(\R^N)}$. It is obvious that $\|\na u_{k,R}\|_N^N =1$ and by \eqref{eq:normuk} 
\[
\int_{\R^N} |u_{k,R}|^N dx = R^N \int_{\R^N} |u_k|^N dx = R^N\lt(\frac{N!}{N^N k} + O(k^{N-1}e^{-k})\rt),
\]
which implies
\begin{equation}\label{eq:tiemcan}
\frac{1+ \|\tilde u_{k,R}\|_N^N}{\|u_{k,R}\|_{W^{1,N}(\R^N)}^N}= \frac{1 +2 \|u_{k,R}\|_N^N}{(1+ 2\|u_{k,R}\|_N^N)^2} = 1+ O(\|u_{k,R}\|_N^{2N}) = 1 + R^N O(k^{-2}).
\end{equation}
Let $w_k$ be the function determined from $u_k$ by \eqref{eq:relation}, we then have
\begin{equation}\label{eq:wk}
w_k(t) = 
\begin{cases}
0 &\mbox{if $t \leq 0$,}\\
k^{\frac{N-1}N} \frac tk&\mbox{if $0< t \leq k$,}\\
k^{\frac{N-1}N} &\mbox{if $t > k$.}
\end{cases}
\end{equation}
Since $\|\tilde u_{k,R}\|_{W^{1,N}(\R^N)} =1$, from \eqref{eq:Phiofu}, \eqref{eq:tiemcan}, \eqref{eq:wk} and integration by parts, we get
\begin{align*}
MT(N,\beta_N,1)&\geq \int_{\R^N} \Phi_N(\beta_N(1+ \|\tilde u_{k,R}\|_N^N)^{\frac1{N-1}} \tilde u_{k,R}^{\frac N{N-1}}) dx\\
&= \int_{\R^N} \Phi_N\lt(\beta_N \lt(\frac{1+ \|\tilde u_{k,R}\|_N^N}{\|u_{k,R}\|_{W^{1,N}(\R^N)}^N}\rt)^{\frac1{N-1}} u_{k,R}^{\frac N{N-1}}\rt) dx\\
&=R^N\int_{\R^N} \Phi_N\lt(\beta_N \lt(\frac{1+ \|\tilde u_{k,R}\|_N^N}{\|u_{k,R}\|_{W^{1,N}(\R^N)}^N}\rt)^{\frac1{N-1}} u_{k}^{\frac N{N-1}}\rt) dx\\
&=R^N\frac{\om_{N-1}}{N} \int_{\R} \Phi_N\lt(\lt(\frac{1+ \|\tilde u_{k,R}\|_N^N}{\|u_{k,R}\|_{W^{1,N}(\R^N)}^N}\rt)^{\frac1{N-1}} w_k^{\frac N{N-1}}\rt) e^{-t} dt\\
&=R^N\frac{\om_{N-1}}{N} \int_{\R} \Phi_N\lt(\lt(1+ R^N O(k^{-2})\rt)^{\frac1{N-1}} w_k^{\frac N{N-1}}\rt) e^{-t} dt\\
&\geq R^N\frac{\om_{N-1}}{N} \int_{k}^\infty \Phi_N\lt(\lt(1+ R^N O(k^{-2})\rt) w_k^{\frac N{N-1}}\rt) e^{-t} dt\\
&=R^N\frac{\om_{N-1}}{N} \Phi_N\lt(\lt(1+ R^N O(k^{-2})\rt) k\rt) e^{-k}\\
&=R^N\frac{\om_{N-1}}{N} e^{-k}\lt(e^{k + R^{N}O(k^{-1})} -\sum_{j=0}^{N-2} \frac{(k+ R^{N} O(k^{-1}))^j}{j!}\rt).
\end{align*}
Let $k\to \infty$ we obtain
\[
MT(N,\beta_N,1) \geq R^{N} \frac{\om_{N-1}}{N},
\]
for any $R >0$, hence $MT(N,\beta_N,1) =\infty$. This finishes our proof.

\section*{Acknowledgments}
This work was supported by the CIMI's postdoctoral research fellowship.

\end{document}